\newtheorem{theorem}{Theorem}[section]
  \theoremstyle{plain}
\newtheorem{remark}[theorem]{Remark}
  \theoremstyle{plain}
\newtheorem{remark*}{Remark}
  \theoremstyle{plain}
\newtheorem{lemma}[theorem]{Lemma}
  \theoremstyle{plain}
\newtheorem*{lemma*}{Lemma}
  \theoremstyle{plain}
\newtheorem{proposition}[theorem]{Proposition}
  \theoremstyle{plain}
\newtheorem{corollary}[theorem]{Corollary}
  \theoremstyle{definition}
  \newtheorem{definition}[theorem]{\protect\definitionname}
  \theoremstyle{definition}
  \newtheorem*{definition*}{\protect\definitionname}
  \theoremstyle{plain}
  \newtheorem{assumption}[theorem]{\protect\assumptionname}
  \providecommand{\assumptionname}{Assumption}
  \providecommand{\definitionname}{Definition}
\begin{document}

\global\long\def\Rm{\mathbb{R}^{m}}
\global\long\def\N{\mathbb{N}}
\global\long\def\R{\mathbb{R}}
\global\long\def\P{\mathbb{P}}

\global\long\def\cD{\mathcal{D}}
\global\long\def\cM{\mathcal{M}}
\global\long\def\lebesgueL{\mathcal{L}}
\global\long\def\mupalm{\mu_{\mathcal{P}}}
 \global\long\def\borelB{\mathcal{B}}
\global\long\def\sF{\mathcal{F}}
\global\long\def\sE{\mathcal{E}}

\global\long\def\loc{\mathrm{loc}}
\global\long\def\pot{\mathrm{pot}}
\global\long\def\sol{\mathrm{sol}}

\global\long\def\Rmd{\mathbb{R}^{m\times m}}
\global\long\def\Zd{\mathbb{Z}^{d}}

\global\long\def\bU{\boldsymbol{\mathrm{U}}}

\global\long\def\cM{\mathcal{M}}
\global\long\def\cR{\mathcal{R}}
\global\long\def\cU{\mathcal{U}}
\global\long\def\cV{\mathcal{V}}

\global\long\def\sI{\mathscr{I}}

\global\long\def\rmD{\mathrm{D}}

\global\long\def\eps{\varepsilon}

\global\long\def\mugammapalm{\mu_{\Gamma,\mathcal{P}}}
\global\long\def\mupalm{\mu_{\mathcal{P}}}
\global\long\def\nupalm{\nu_{\mathcal{P}}}

\global\long\def\muomega{\mu_{\omega}}
\global\long\def\mugammaomega{\mu_{\Gamma(\omega)}}

\global\long\def\bQ{\boldsymbol{Q}}
\global\long\def\d{\mathrm{d}}

\global\long\def\norm#1{\left\Vert #1\right\Vert }

\global\long\def\ue{u^{\eps}}
\global\long\def\ve{v^{\eps}}

\global\long\def\weakto{\rightharpoonup}

\title{A mathematical model 
for {A}lzheimer's disease: An approach via
stochastic homogenization of the {S}moluchowski equation}

\author{Bruno Franchi\thanks{Dipartimento di Matematica, Universit\`a
di Bologna, Piazza di Porta S.~Donato 5, 40126 Bologna, Italy, 
(bruno.franchi@unibo.it).}, $\, $ 
Martin Heida\thanks{Weierstrass Institute for Applied Analysis and Stochastics, Mohrenstra{\ss}e 39, 10117 Berlin, Germany, (martin.heida@wias-berlin.de)}
$\, $ and $\, $ Silvia Lorenzani\thanks{Dipartimento di Matematica, Politecnico
di Milano, Piazza Leonardo da Vinci 32, 20133 Milano, Italy, 
(silvia.lorenzani@polimi.it).}} 

\date {}

\maketitle

\begin{abstract}
In this note, we apply the theory of stochastic homogenization to find the
asymptotic behavior of the solution of a set of Smoluchowski's 
coagulation-diffusion equations with non-homogeneous Neumann boundary
conditions.
This system is meant to model the aggregation and diffusion of 
$\beta$-amyloid peptide (A$\beta$) in the cerebral tissue, 
a process associated with
the development of Alzheimer's disease.
In contrast to the approach used in our previous works, in the present paper
we account for the non-periodicity of the cellular structure of the brain
by assuming a stochastic model for the spatial distribution of neurons.
Further, we consider non-periodic random diffusion coefficients for the
amyloid aggregates and a random production of A$\beta$ in the monomeric
form at the level of neuronal membranes.
\end{abstract}

\section{Introduction} \label{intro}

The primary feature of several neurological diseases, such as Prion diseases,
 Alzheimer's
disease, Parkinson's disease, Creutzfeldt-Jacob disease is the pathological 
presence of misfolded protein aggregates
(that is, proteins that fail to configure properly, becoming structurally
abnormal) \cite{CIE}, \cite{MFRL}. 
In this paper, we focus our interest in Alzheimer's
disease (AD). Indeed, AD has a huge social and economic impact. Until 2040 its worldwide global
prevalence (estimated as high as 44 millions in 2015) is expected to double every 20 years.
In particular, existing clinical data support the idea that amyloid-$\beta$
peptide (A$\beta$)  has a critical role as initiator
of a complex network of pathologic changes in the brain, ultimately leading
to Alzheimer's disease ('amyloid hypothesis', see e.g. \cite{Haass},
\cite{Karran}, \cite{Musiek}).
Although there is no doubt that the presence of fibrillar A$\beta$ deposition
(senile plaques) is the hallmark of the clinical syndrome of AD, the bulk of
human biomarker data reveals the existence of a discrepancy between
the appearance of amyloid deposits and clinical dementia, with A$\beta$
plaques anatomically disconnected from areas of severe neuronal loss.
One of the most reliable explanations, which also supports the amyloid
hypothesis, is that, in addition to fibrillar plaques, oligomeric forms of
A$\beta$ can play a dominant role in triggering a wide variety of pathogenic
effects.
Mice which accumulate A$\beta$ oligomers, but not fibrillar plaques, develop
synaptic damage, inflammation and cognitive impairment 
\cite{Tomiyama}, \cite{Zhang}.
Despite the biological relevance of the negative effects produced, the
exact mechanisms of misfolded protein aggregation and propagation, as well as
their toxicity, are still not well understood.
Furthermore, the complexity of the underlying processes makes it difficult
to extrapolate the effects of protein misfolding from the microscopic
(e.g. molecular) to the macroscopic (e.g. organs) scale, preventing the
development of effective therapeutic interventions.
In order to complement the medical and biological research, the last few
decades have seen the emergence of several mathematical models that can help
to provide a better insight into the laws governing the processes of protein
aggregation and the effects of toxicity spreading.
The mathematical approaches considered so far can be predominantly divided
into two different classes: on the one hand, there are the models designed
to describe processes at the molecular (microscopic) scale (aggregation
kinetics, short-range spatial spreading, etc...) 
\cite{CIE}, \cite{Achdou}, \cite{FL},
\cite{Helal} while, on the other side,
there are models that account for large-scale events characterizing the
progression of neurodegenerative misfolded protein-related diseases
\cite{CIE}, \cite{Bertsch}, \cite{BFTT}, \cite{Tosin}.

\subsection{A mathematical model for the aggregation of
$\beta$-amyloid based on Smoluchowski's equations.} \label{sec:mod-smol}
In $2013$, Achdou et al. proposed in \cite{Achdou} a mathematical model for the
aggregation and diffusion of $\beta$-amyloid peptide (A$\beta$) in the brain
affected by Alzheimer's disease (AD) at a microscopic scale (the size of a
single neuron).
In particular, these authors considered a portion of cerebral tissue,
represented by a bounded smooth region $\bQ \subset \mathbb{R}^3$, and described
the neurons as a family of regular regions $G_j$ such that:

\par\indent
(i) $\overline{G}_j \subset \bQ$ \, \, if $j=1,2, \ldots \overline{M}$;

\par\indent
(ii) $\overline{G}_i \cap \overline{G}_j =\varnothing$ \, \, if
$i \neq j$.

Then, the following system of Smoluchowski equations has been introduced:

\begin{eqnarray} \label{intro.1.1}
\begin{cases}
\frac{\displaystyle \partial{u_1}}{\displaystyle \partial t}(t,x)-d_1 \,
\triangle_x u_1(t,x)+u_1(t,x)&\sum_{j=1}^M a_{1,j}
u_j(t,x)= 0 
\\

\\
\frac{\displaystyle \partial u_1}{\displaystyle \partial \nu}
\equiv \nabla_x u_1 \cdot n=0
& \text{on } \partial \bQ \\

\\
\frac{\displaystyle \partial u_1}{\displaystyle \partial \nu}
\equiv \nabla_x u_1 \cdot n=
\eta_j
& \text{on } \Gamma_j,  \; \; j=1, \ldots , \overline {M} \\

\\
u_1(0,x)=U_1 \geq 0 
\end{cases}
\end{eqnarray}

if $1 < m < M$

\begin{eqnarray} \label{intro.1.2}
\begin{cases}
\frac{\displaystyle \partial{u_m}}{\displaystyle \partial t}(t,x)-d_m \,
\triangle_x u_m(t,x)+u_m(t,x) &\sum_{j=1}^M a_{m,j}
u_j(t,x)= \\
&\frac{\displaystyle 1}{\displaystyle 2} \sum_{j=1}^{m-1} a_{j,m-j} u_j u_{m-j}
\\

\\
\frac{\displaystyle \partial u_m}{\displaystyle \partial \nu}
\equiv \nabla_x u_m \cdot n=0 
& \text{on }  \partial \bQ  \\

\\
\frac{\displaystyle \partial u_m}{\displaystyle \partial \nu}
\equiv \nabla_x u_m \cdot n=0
& \text{on }  \Gamma_j, \, \; j=1, \ldots , \overline {M}\\

\\
u_m(0,x)=0 
\end{cases}
\end{eqnarray}

and

\begin{eqnarray} \label{intro.1.3}
\begin{cases}
\frac{\displaystyle \partial{u_M}}{\displaystyle \partial t}(t,x)-d_M \,
\triangle_x u_M(t,x)
=\frac{\displaystyle 1}{\displaystyle 2} 
\sum_{\substack{j+k \geq M \\ k< M  (\text{if } j=M) \\ 
j<M  (\text{if } k=M)}} &a_{j,k} 
\, u_j \,  u_{k} 
\\

\\
\frac{\displaystyle \partial u_M}{\displaystyle \partial \nu}
\equiv \nabla_x u_M \cdot n=0
& \text{on } \partial \bQ \\

\\
\frac{\displaystyle \partial u_M}{\displaystyle \partial \nu}
\equiv \nabla_x u_M \cdot n=0
& \text{on }   \Gamma_j, \; \; j=1, \ldots , \overline {M}\\

\\
u_M(0,x)=0  
\end{cases}
\end{eqnarray}
where $u_j (t,x)$ ($1 \leq j < M-1$) is the molar concentration at the point
$x$ and at the time $t$ of an A$\beta$ assembly of $j$ monomers, while
$u_M$ takes into account aggregations of more than $M-1$ monomers.
The production of A$\beta$ in monomeric form from the neuron membranes
has been modeled by coupling the evolution equation for $u_1$ with a
non-homogeneous Neumann condition on the boundaries of the sets $G_j$, 
indicated by $\Gamma_j$.
In particular, $0 \leq \eta_j \leq 1$ is a smooth function for $j=1, \ldots ,
\overline {M}$.
These monomers, by binary coalescence, give rise to larger assemblies,
which can diffuse in the cerebral tissue with a diffusion coefficient
$d_j$ that depends on their size.
Let us remark that, Achdou's model has been formulated to be valid on small
spatial domains, therefore isotropic diffusion has been assumed.  
The coagulation rates $a_{i,j}$ are symmetric $a_{i,j}=a_{j,i} >0$,
$i,j=1, \ldots ,M$, but $a_{MM}=0$, since it is assumed that long fibrils,
characterized by a very slow diffusion, do not coagulate with each other.

\subsection{Stochastic homogenization.} \label{sec:stoc-homo}
Since the development of modern imaging techniques (useful to evaluate the
progression of Alzheimer's disease) requires the need to test the predictions
of mathematical modeling at the macroscale, in the present paper, we have
applied the homogenization method to the model presented by Achdou et al.
\cite{Achdou}, in order to describe the effects of the production and 
agglomeration of the A$\beta$ at the macroscopic level.
The homogenization theory, introduced by the mathematicians in the seventies
to perform a sort of averaging procedure on the solutions of partial 
differential equations with rapidly varying coefficients or describing media
with microstructures, has been already successfully applied in \cite{FL},
\cite{FL_wheeden} to
derive a limiting model from that proposed by Achdou et al. \cite{Achdou}, in
the context of a periodically perforated domain.
In particular, in \cite{FL}, \cite{FL_wheeden} we have constructed our set 
${\bQ}^{\varepsilon}$,
starting from a fixed bounded domain ${\bQ}$ (which represents a portion of
cerebral tissue) and removing from it many small holes of characteristic
size $\varepsilon$ (the neurons) distributed periodically.
Then, we have rewritten the model problem (\ref{intro.1.1})-(\ref{intro.1.3}) 
as a family of
equations in ${\bQ}^{\varepsilon}$ and we have performed the limit
$\varepsilon \rightarrow 0$ in the framework of the two-scale convergence,
first introduced by Nguetseng \cite{Nguetseng} and Allaire \cite{Allaire}.
The peculiarity of the two-scale convergence method, used in 
\cite{FL}, \cite{FL_wheeden} to study
the limiting behavior of the Smoluchowski-type equations, is that in a single
process, one can find the homogenized equations and prove the convergence of
a sequence of solutions to the problem at hand.
Since the picture presented in our previous works 
\cite{FL}, \cite{FL_wheeden} is a too
crude oversimplification of the biomedical reality, in the present paper we
have chosen to resort to a stochastic parametrization of the model equations:
that is, we account for the non-periodic cellular structure of the brain.
In particular, the distribution of neurons is modeled in the following way:
it exists a family of predominantly genetic causes, not wholly deterministic,
which influences the position of neurons and the microscopic structure of the
parenchyma in a portion of the brain tissue ${\bQ}$.
Also, we consider non-periodic random diffusion coefficients and a random
production of A$\beta$ in the monomeric form at the level of neuronal
membranes.
This together defines a probability space $(\Omega, \mathcal{F}, \mathbb{P})$. 

Denoting by $\omega\in\Omega$ the random variable in our model, the 
set of random holes 
in $\Rm$ (representing the neurons) is labeled by $G(\omega)$.  
The production of $\beta$-amyloid at the boundary
$\Gamma(\omega)$ of $G(\omega)$ is
described by a random scalar function $\eta (x,\omega)$ and the
diffusivity, in the brain parenchyma, of clusters of different sizes 
$s$ is modeled by
random matrices $D_s(x,\omega)$ on $\Omega$.
For technical reasons, we assume that the randomness of the medium is 
stationary, 
that is, the probability distribution of the random variables observed 
in a set $A\subset\Rm$ is shift invariant (
all variables share the same distribution in $A$ and 
$A+x$, $x\in\Rm$). 
As shown by Papanicolaou and Varadhan \cite{PapVar} 
(who introduced this concept),
the assumption of stationarity provides a family of mappings 
$\left(\tau_x\right)_{x\in\Rm}: \Omega \rightarrow \Omega $ such that 
$\eta (x,\omega)=\eta(\tau_x\omega)$ and $D_s(x,\omega)=D_s(\tau_x\omega)$.
The periodic homogenization can be recovered in this frame
considering $\Omega=[0,1)^m$ 
with $\tau_x\omega=x+\omega\mod [0,1)^m$, where one  canonically chooses 
$\omega=0$ 
(see also \cite{Hei11}).

The above mentioned findings can  be interpreted in the sense that the 
stationarity of the coefficients and the resulting dynamical system  
$\tau_x$ transfer some  structural properties from $\Rm$ to $\Omega$ such 
that we could formally identify $\Omega\approx\Rm$. 
Accordingly, a stationary random set in $\Rm$ corresponds to a subset of $\Omega$ 
and a random Hausdorff measure on $\Rm$ corresponds to a measure on $\Omega$.
In order to prevent confusion, let us note that, all the 
similarities we mention here are of algebraic and measure-theoretic 
nature and not in the sense of a vector space isomorphism. 
With the above  short overview, we just  want to point out that many useful 
tools 
in periodic homogenization find their counterpart in the stochastic setting. 
The stochastic  homogenization theorems can  be formulated in a very
similar way to their 
periodic version, if we rely on the above connections and similarities, 
though the mathematics behind differs sometimes significantly.
In this framework, we have studied the limiting behavior of the system of
nonlinear Smoluchowski-type equations describing our model by using a sort
of stochastic version of the two-scale convergence method.

The rest of the paper is organized as follows.
In Section \ref{sec:rand-med}, we give a brief survey of the probabilistic
background behind the theory of stochastic homogenization and in Section
\ref{sec:two-scale} we present all the main definitions and theorems related
to the stochastic two-scale convergence method.
In Section \ref{sett:prob} we give a detailed description of our model and
derive all the a priori estimates needed to apply the two-scale homogenization
technique.
Then, Section \ref{sec:homo} is devoted to the presentation and the proof
of our main results on the stochastic homogenization of the nonlinear
Smoluchowski coagulation-diffusion equations in a randomly perforated
domain.
Finally,  Appendix \ref{appA} is introduced to summarize some basic concepts
on the realization of random sets.

\vskip2mm

\section{Random media} \label{sec:rand-med}

The method of stochastic two-scale convergence introduced by 
Zhikov and Piatnitsky \cite{ZP}
is based on a setting that was originally introduced by 
Papanicolaou and Varadhan \cite{PapVar}.
The connection between the abstract setting on random singular measures in 
\cite{ZP} 
and the theory of random sets was worked out in \cite{Hei11}. 
Hence we will first introduce the setting of \cite{PapVar} and 
explain the ideas pointed out in \cite{Hei11} before we move on to the 
definition of two-scale convergence.

\subsection{Stationary ergodic dynamical systems.} \label{sec:StErg-dyn-sys} 
This section has the intention to provide a probabilistic background for the 
theory of stochastic homogenization, 
and more particularly for stochastic two-scale convergence. 
We follow the formulation given by
Papanicolaou and Varadhan \cite{PapVar},
enriched by the ideas presented in \cite{JKO}, \cite{ZP} and \cite{Hei11,Hei17}.

The whole theory is based on the concept of dynamical systems.

\begin{definition}[Dynamical system] \label{def:dyn-sys}
Let $(\Omega, \mathcal{F}, \mathbb{P})$ be a probability space.
An $m$-dimensional dynamical system is defined as a family of 
measurable bijective mappings
$\tau_x: \Omega \rightarrow \Omega$, $x \in \mathbb{R}^m,$
satisfying the following conditions:

\par\indent
(i) the group property: $\tau_0=\mathbb{1}$ ($\mathbb{1}$ is the identity
mapping), $\tau_{x+y}=\tau_x \circ \tau_y \; \; \; 
\forall x,y \in \mathbb{R}^m$;

\par\indent
(ii) the mappings $\tau_x: \Omega \rightarrow \Omega$ preserve the measure
$\mathbb{P}$ on $\Omega$, i.e., for every $x \in \mathbb{R}^m$, and every
$\mathbb{P}$-measurable set $F \in \mathcal{F}$, we have
$\mathbb{P} (\tau_x F)=\mathbb{P} (F)$;

\par\indent
(iii) the map $\mathcal T: \Omega \times \mathbb{R}^m \rightarrow \Omega$:
$(\omega, x) \mapsto \tau_x \omega$ is measurable (for the standard 
$\sigma$-algebra on the product space, where on $\mathbb{R}^m$ we take
the Borel $\sigma$-algebra).
\end{definition}

\par\noindent
Note that (i) and (iii) imply that, for every $x\in\R^m$ and measurable 
$F\subset\Omega$, the set $\tau_x F$ is
measurable: since $\tau_{-x}\left(\tau_x F\right)=F$ 
we find that $\tau_x F$ is the projection of 
${\mathcal T}^{-1}(F)\cap \{-x\}\times\Omega$ onto $\Omega$. 
We define the notion of ergodicity for the dynamical system.

\begin{definition}[Ergodicity] \label{def:ergodic}
A dynamical system is called ergodic if one of the following equivalent
conditions is fulfilled

\par\indent
(i) given a measurable and invariant function $f$ in $\Omega$, that is
$$\forall x \in \mathbb{R}^m\quad f(\omega)=f(\tau_x \omega) $$
almost everywhere in $\Omega$, then
$$f(\omega)=\text{const. } \; \; \; \text{for } \; \; \mathbb{P}-a.e. \; \; 
\omega \in \Omega;$$

\par\indent
(ii) if $F \in \mathcal{F}$ is such that $\tau_x F=F \; \; \; 
\forall x \in \mathbb{R}^m$, then $\mathbb{P} (F)=0$ or $\mathbb{P} (F)=1$. 
\end{definition}

\begin{definition}[Stationarity] \label{d2.3}
Given a probability space $(\Omega, \mathcal{F}, \mathbb{P})$, a real valued
process is a measurable function $f: \mathbb{R}^m \times \Omega \rightarrow
\mathbb{R}$.
We will say $f$ is stationary if the distribution of the random variable
$f(y, \cdot): \Omega \rightarrow \mathbb{R}$ is independent of $y$, i.e., for
all $a \in \mathbb{R}$, $\mathbb{P} (\{ \omega: f(y, \omega) > a \})$ is
independent of $y$.
This is qualified by assuming the existence
of a dynamical system $\tau_y: \Omega \rightarrow \Omega$ ($y \in \mathbb{R}^m$)
 and saying that
$f: \mathbb{R}^m \times \Omega \rightarrow \mathbb{R}$ is stationary if
$$f(y+y', \omega)=f(y, \tau_{y'} \omega) \; \; \; \text {for all} \; \; 
y, y' \in \mathbb{R}^m \; \; \text {and } \; \; \omega \in \Omega.$$
\end{definition}
Finally, we say that a random variable $f: \mathbb{R}^m \times \Omega 
\rightarrow \mathbb{R}$ is stationary ergodic if it is stationary and the
underlying dynamical system is ergodic.
Naturally, if $f$ is taking values in a finite dimensional space, we will
say it is stationary if all of its components in a given basis
are stationary with respect to the same dynamical system. 
This property is also called jointly stationary.

\begin{remark}\cite{PapVar} \label{r2.1}
A function $f$ is stationary ergodic if and only if there is some
measurable function $\tilde {f}: \Omega \rightarrow \mathbb{R}$ such that
$$f(x, \omega)=\tilde {f}(\tau_x \omega).$$
For a fixed $\omega \in \Omega$ the function 
$x\mapsto \tilde {f}(\tau_x \omega)$ of
argument $x \in \mathbb{R}^m$ is said to be a realization of function
$\tilde {f}$.
\end{remark}

Let $L^{p} (\Omega)$ ($1\leq p<\infty$) denote the space formed
by (the equivalence classes of) measurable functions that are
$\mathbb{P}$-integrable with exponent $p$ and $L^{\infty} (\Omega)$
be the space of measurable essentially bounded functions.
If $f \in L^{p} (\Omega)$, then $\mathbb{P}$-almost all realizations
$f(\tau_x \omega)$ belong to $L^{p}_{\text{loc}} (\mathbb{R}^m)$
\cite{JKO}.

We define the following $m$-parameter group of operators in the space
$L^2 (\Omega)$:
$$U(x): L^2 (\Omega) \rightarrow L^2 (\Omega)\,,\qquad 
f\,\mapsto\, \left[U(x) f\right] (\omega):=f(\tau_x \omega)\,.$$
It is known \cite{JKO} that the
operator $U(x)$
is unitary for each $x \in \mathbb{R}^m$ and the group $U(x)$ is strongly 
continuous , i.e.
$$\forall f \in L^2 (\Omega)\,:\qquad 
\lim_{x \to 0} \Vert U(x) f-f \Vert_{L^2 (\Omega)}=0\,.$$
For $x=\{0, 0, \ldots, x_i, 0, \ldots, 0 \}$ we obtain a one-parameter
group whose infinitesimal generator will be denoted by $\rmD_i$ with domain 
$\cD_i(\Omega)$.
The unitarity of the group $U(x)$ implies that the operators $\rmD_i$
are skew-symmetric:
\begin{equation} \label{1.1}
\forall f,g\in\cD_i (\Omega)\,:\qquad\int_{\Omega} (\rmD_i f) \, g \, 
d\mathbb{P}=
- \int_{\Omega} f \, (\rmD_i g) \, d\mathbb{P}\,,
\end{equation}
and by definition of the generators we have
\begin{equation} \label{1.2}
\rmD_i f=\lim_{x_i \neq 0, \,  x_i \to 0} 
\frac{f(\tau_{x_i} \omega)-f(\omega)}{x_i}
\end{equation}
in the sense of convergence in $L^2 (\Omega)$. 
As Papanicolaou and Varadhan \cite{PapVar} have shown, 
almost every realization possesses a weak derivative and it holds
$$\frac{\partial}{\partial x_i} f(\tau_x \omega)=(\rmD_i f)(\tau_x \omega) 
\in L^2_{\text{loc}} (\mathbb{R}^m) \,.$$
Also we have that ($i \, \rmD_1, \ldots, i \, \rmD_m$) are commuting,
self-adjoint, closed, and densely defined linear operators on
$L^2 (\Omega)$ \cite{HPV}, and we may define 
$$ \rmD_\omega f:=\left(\rmD_1 f,\dots,\rmD_m f\right)^\top\,.$$

We introduce the space $W^{1,2}(\Omega)$ with norm $\|\cdot\|_{1,2}$ through 
\begin{align*}
W^{1,2}(\Omega)&:= {\mathcal D}_1 (\Omega) \bigcap \ldots \bigcap
{\mathcal D}_m (\Omega)\\
\|f\|_{1,2}&:=\|f\|_{L^2(\Omega)}+\sum_{i=1}^{m}\|\rmD_i f\|_{L^2(\Omega)}\,.
\end{align*}
Further let $L_{\loc}^{2}(\Rm;\Rm)$ be the set of measurable functions
$f:\,\Rm\to\Rm$ such that $f|_{\bU}\in L^{2}(\bU;\Rm)$ for every
bounded domain $\bU$ and we define 
\begin{align*}
L_{\pot,\loc}^{2}(\Rm) & :=\left\{ f\in L_{\loc}^{2}(\Rm;\Rm)\,\,|\,\,
\forall\bU\,\,\mbox{bounded domain, }\exists\varphi\in H^{1}(\bU)\,:\,
f=\nabla\varphi\right\} \,,\\
L_{\sol,\loc}^{2}(\Rm) & :=\left\{ f\in L_{\loc}^{2}(\Rm;\Rm)\,\,|\,\,
\int_{\Rm}f\cdot\nabla\varphi=0\,\,\forall\varphi\in C_{c}^{1}(\Rm)\right\} \,.
\end{align*}
Recalling the notion of a realization $f_{\omega}(x):=f(\tau_{x}\omega)$
for $f\in L^{2}(\Omega)$, we can then define corresponding spaces
on $\Omega$ through
\begin{align}
L_{\pot}^{2}(\Omega) & :=\left\{ f\in L^{2}(\Omega;\Rm)\,:\,
f_{\omega}\in L_{\pot,\loc}^{2}(\Rm)\,\,\mbox{for }\P-\mbox{a.e. }
\omega\in\Omega\right\} \,,\nonumber \\
L_{\sol}^{2}(\Omega) & :=\left\{ f\in L^{2}(\Omega;\Rm)\,:
\,f_{\omega}\in L_{\sol,\loc}^{2}(\Rm)\,\,\mbox{for }\P-\mbox{a.e. }
\omega\in\Omega\right\} \,,\label{eq:sto-Lp-pot-sol-omega-general}\\
\cV_{\pot}^{2}(\Omega) & :=\left\{ f\in L_{\pot}^{2}(\Omega)\,:\,
\int_{\Omega}f\,d\P=0\right\} \,.\nonumber 
\end{align}
It has been shown in Chapter 7 of \cite{JKO} that all of these spaces are 
closed and that $L^{2}(\Omega;\Rm)=L_{\sol}^{2}(\Omega)\oplus\cV_{\pot}^{2}
(\Omega)$.
This has been proved using the continuous smoothing operator 
\begin{equation}\label{eq:def-smoothing-omega}
\sI_{\delta}:\,L^{2}(\Omega)\to W^{1,2}(\Omega)\,,\qquad\sI_{\delta}
f(\omega):=\int_{\Rm}\eta_{\delta}(x)f\left(\tau_{x}\omega\right)\d x\,,
\end{equation}
where $\eta_{\delta}$ is a Dirac-sequence of smooth functions. It
can be shown that, for every $f\in L^{2}(\Omega)$, it holds 
$\sI_{\delta}f\to f$
as $\delta\to0$  and the continuity of $\sI_{\delta}$ implies 
$\rmD_{i}\sI_{\delta}f=\sI_{\delta}\rmD_{i}f$
for all $f\in W^{1,2}(\Omega)$. Thus, if we consider 
\[
\tilde{\cV}:=\mathrm{closure}_{L^{2}(\Omega)}\left\{ \rmD_{\omega}f\,:\;
f\in W^{1,2}(\Omega)\right\} 
\]
we first obtain $\tilde{\cV}\subseteq\cV_{\pot}^{2}(\Omega)$ and
for $g\in\tilde{\cV}^{\bot}$, we have for every $\delta>0$ 
\[
\forall f\in W^{1,2}(\Omega)\,:\quad0=\left\langle g,\rmD_{\omega}\sI_{\delta}
f\right\rangle =\left\langle \sI_{\delta}g,\rmD_{\omega}f\right\rangle 
=-\sum_{i}\left\langle \rmD_{i}\sI_{\delta}g,f\right\rangle \,,
\]
and hence $\sum_{i}\rmD_{i}\sI_{\delta}g=0$. 
In particular, $\sI_{\delta}g\in L_{\sol}^{2}(\Omega)$
and since $L_{\sol}^{2}(\Omega)$ is closed we find 
$\tilde{\cV}^{\bot}\subseteq L_{\sol}^{2}(\Omega)$.
This implies $\tilde{\cV}\supseteq\cV_{\pot}^{2}(\Omega)$ and hence
\begin{equation}\label{eq:char-l2pot}
\tilde{\cV}=\mathrm{closure}_{L^{2}(\Omega)}\left\{ \rmD_{\omega}f\,:\;
f\in W^{1,2}(\Omega)\right\}=\cV_{\pot}^{2}(\Omega)\,.
\end{equation} 

In what follows, we will often impose the following assumption:
\begin{assumption}
\label{assu:Omega-mu-tau-fundamental}Assume that $\Omega$ is a 
separable metric space and $(\Omega,\sF,\P)$
is a probability space with countably generated $\sigma$-algebra and let
$\tau_{x}$, $x\in\Rm$, be a  
dynamical system
in the sense of Definition \ref{def:dyn-sys} that
is ergodic in the sense of Definition \ref{def:ergodic}. 
\end{assumption}

It was discussed in \cite{Hei11} that the latter assumption 
is not a restriction to our choice of parameters.

By $\cM(\Rm)$ we denote the space of finitely bounded Borel measures
on $\Rm$ equipped with the Vague topology, which makes $\cM(\R^m)$ a separable 
metric space \cite{DV}. 
The $\sigma$-field defined by this topology is denoted by 
${\mathcal B}(\cM)$ since it is a Borel $\sigma$-field on $\cM$.
A random measure is a measurable mapping 
\[
\mu_{\bullet}:\;\Omega\to\cM(\Rm)\,,\qquad\omega\mapsto\mu_{\omega}
\]
which is equivalent to the measurability of all mappings 
$\omega\mapsto\mu_{\omega}(A)$,
where $A\subset\Rm$ are arbitrary bounded Borel sets. A random measure
is stationary if the distribution of $\mu_{\omega}(A)$ is invariant
under translations of $A$. 
In particular, random measures satisfy $\mu_{\tau_x \omega}(A)=\mu_{\omega}(A+x)$. 
For stationary random measures we find the following important property.

\begin{theorem}[\cite{DV} Existence of Palm measure and Campbell's Formula]
\label{theoremmecke} Let $\lebesgueL$ be the Lebesgue-measure on
$\Rm$ with $dx:=d\lebesgueL(x)$ and $(\Omega,\sF,\P)$ and $\tau$
as in Assumption \ref{assu:Omega-mu-tau-fundamental}. Then there exists
a unique measure $\mupalm$ on $\Omega$ such that 
\[
\int_{\Omega}\int_{\Rm}f(x,\tau_{x}\omega)\,\d\muomega(x)\d\P(\omega)=
\int_{\Rm}\int_{\Omega}f(x,\omega)\,\d\mupalm(\omega)\d x
\]
 for all $\borelB(\Rm)\times\borelB(\Omega)$-measurable non negative
functions and all $\mupalm\times\lebesgueL$- integrable functions.
Furthermore 
\begin{eqnarray}
\mupalm(A) & = & \int_{\Omega}\int_{\Rm}g(s)\chi_{A}(\tau_{s}\omega)
\d\muomega(s)\d\P(\omega)\,,\label{eq:defPalm-push-fw}\\
\int_{\Omega}f(\omega)\d\mupalm & = & \int_{\Omega}\int_{\Rm}g(s)f(\tau_{s}
\omega)\d\muomega(s)\d\P(\omega)
\end{eqnarray}
 for an arbitrary $g\in L^{1}(\Rm,\lebesgueL)$ with $\int_{\Rm}g(x)\d x=1$
and $\mupalm$ is $\sigma$-finite.
\end{theorem}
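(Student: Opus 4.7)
The plan is to construct $\mupalm$ explicitly via the second displayed formula for one fixed probability density $g_0$, derive Campbell's formula from it by a translation-averaging argument, and then read off the independence of $g$, uniqueness, and $\sigma$-finiteness as corollaries. The tools I rely on are the measure preservation $\P\circ\tau_y^{-1}=\P$ from Definition~\ref{def:dyn-sys}(ii), the stationarity identity $\int h(x)\,\d\mu_{\tau_y\omega}(x)=\int h(x-y)\,\d\muomega(x)$ (equivalent to $\mu_{\tau_y\omega}(A)=\muomega(A+y)$), and the joint measurability of $(s,\omega)\mapsto\chi_A(\tau_s\omega)$ guaranteed by Definition~\ref{def:dyn-sys}(iii), which legitimates Tonelli's theorem throughout. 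Concretely I fix $g_0\in L^1(\Rm)$ with $g_0\ge 0$ and $\int g_0=1$ and set
\[
\mupalm^{g_0}(A):=\int_\Omega\int_{\Rm} g_0(s)\,\chi_A(\tau_s\omega)\,\d\muomega(s)\,\d\P(\omega)\,;
\]
monotone convergence on the inner integral yields countable additivity, so $\mupalm^{g_0}$ is a well-defined $[0,\infty]$-valued measure on $(\Omega,\sF)$.

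For Campbell I first treat product indicators $f(x,\omega)=\chi_B(x)\chi_A(\omega)$ with $B\subset\Rm$ bounded Borel and $A\in\sF$. Replacing $\omega$ with $\tau_{-y}\omega$ in the outer integral and then applying the stationarity identity under $\d\muomega$ gives, for every $y\in\Rm$,
\[
\int_\Omega\!\int\chi_B(x)\chi_A(\tau_x\omega)\,\d\muomega(x)\,\d\P=\int_\Omega\!\int\chi_B(x+y)\chi_A(\tau_x\omega)\,\d\muomega(x)\,\d\P\,.
\]
Averaging this identity against $g_0(y)\,\d y$ and applying Fubini rewrites the right-hand side as $\int_{\Rm}\chi_B(z)\,T(z,A)\,\d z$ with $T(z,A):=\int_\Omega\!\int g_0(z-x)\chi_A(\tau_x\omega)\,\d\muomega(x)\,\d\P(\omega)$. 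Since $s\mapsto g_0(z-s)$ is itself a probability density, running the same translation trick for two densities $g_1,g_2$ yields the convolution identity $\mupalm^{g_1}=\mupalm^{g_1*g_2}=\mupalm^{g_2}$, which shows that $\mupalm^{g_0}$ does not depend on the choice of $g_0$, so I simply write $\mupalm$. In particular $T(z,A)=\mupalm(A)$ for every $z$, and Campbell holds on rectangles. A monotone class / MCT argument extends it to all non-negative $\borelB(\Rm)\otimes\sF$-measurable $f$, and splitting $f=f^+-f^-$ handles the $\mupalm\otimes\lebesgueL$-integrable case. The second displayed formula in the statement now follows from the independence of $g$, and the third is the special case $F(x,\omega)=g(x)f(\omega)$ of Campbell.

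Uniqueness is immediate: if $\nu$ is any measure satisfying the stated integral identity, then taking $f=\chi_B\otimes\chi_A$ with $|B|\in(0,\infty)$ gives $|B|\nu(A)=|B|\mupalm(A)$, hence $\nu=\mupalm$ on $\sF$. For $\sigma$-finiteness, define $\Omega_n:=\{\omega:\muomega(B_2(0))\le n\}$; since $\muomega\in\cM(\Rm)$ is $\P$-a.s.\ finite on bounded sets, the $\Omega_n$ exhaust $\Omega$ up to a $\P$-null set. For $s\in B_1(0)$ one has $B_1(0)\subset B_2(s)$, hence $\chi_{\Omega_n}(\tau_s\omega)\le\chi_{\{\muomega(B_1(0))\le n\}}(\omega)$; plugging this into the defining formula with $g_0=|B_1(0)|^{-1}\chi_{B_1(0)}$ yields $\mupalm(\Omega_n)\le n/|B_1(0)|<\infty$. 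The main technical obstacle I expect is the interlocking use of $\P$-invariance and the stationarity identity in the Campbell step: a naive translation argument produces an integrand that depends on $z$ in a way that does not obviously reduce to $\mupalm(A)$, and it is precisely the convolution identity $\mupalm^{g_1*g_2}=\mupalm^{g_1}$ that makes the $z$-dependence collapse.
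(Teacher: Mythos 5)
Your construction is correct, and it is essentially the standard Mecke/refined-Campbell argument for the existence of the Palm measure; the paper itself gives no proof at all here, simply citing Daley--Vere-Jones, so your write-up supplies a self-contained argument that the paper delegates to the literature. All the key steps check out: the substitution $\omega\mapsto\tau_{-y}\omega$ combined with $\mu_{\tau_{-y}\omega}(A)=\mu_{\omega}(A-y)$ does yield the translation identity; averaging it against a density and invoking the $g$-independence collapses $T(z,A)$ to $\mupalm(A)$; the $\pi$-system/MCT extension, the uniqueness via rectangles, and the $\sigma$-finiteness bound $\mupalm(\Omega_n)\le n/|B_1(0)|$ are all sound. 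Two cosmetic points. First, in the convolution step the identity $\mupalm^{g_1}=\mupalm^{g_1(\cdot+y)}$ must be averaged against the \emph{reflected} density $y\mapsto g_2(-y)$ to produce literally $g_1*g_2$; with $g_2(y)\,\d y$ you get a correlation rather than a convolution. Since the reflection of a probability density is a probability density and convolution is commutative, the conclusion $\mupalm^{g_1}=\mupalm^{g_1*g_2}=\mupalm^{g_2}$ survives, but the bookkeeping should be stated. Second, the theorem's final displays allow arbitrary $g\in L^1$ with $\int g=1$, not only non-negative $g$; this does not follow from the independence argument alone (which uses non-negativity) but is an immediate consequence of Campbell's formula applied to $f(x,\omega)=g^{\pm}(x)\chi_A(\omega)$, so you should close the statement with that one extra line. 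Neither point is a gap in substance.
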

The measure $\mupalm$ from Theorem \ref{theoremmecke} is called Palm measure. 
By \eqref{eq:defPalm-push-fw} $\mupalm$ can be interpreted as the push-forward 
measure of $g(x)\d\muomega(x)\d\P(\omega)$ under 
$(x,\omega)\mapsto\tau_x\omega$. 
Stationarity implies that this push-forward is independent of the choice of $g$.
We say that the random measure $\mu_{\omega}$ has finite intensity if 
\begin{equation}
+\infty>\int_{\Omega}\int_{\Rm}\chi_{\Omega\times[0,1]^{m}}(\tau_{x}\omega,x)
\d\muomega(x)\,\d\P(\omega)=\mupalm(\Omega)\,.
\end{equation}

\begin{definition}\label{rescaling}
Given a stationary random measure $\mu_{\omega}$, we introduce the scaled
measure $\mu_{\omega}^{\varepsilon}$ through
\begin{equation}\label{rescaling 1}
\mu_{\omega}^{\varepsilon}(A):={\varepsilon}^m \, \mu_{\omega}
({\varepsilon}^{-1}
\, A).
\end{equation}
\end{definition}

One important property of random measures is the following generalization
of the Birkhoff ergodic theorem.

\begin{lemma}
\label{lem:ergodicity-cont-functions} (\cite{Hei17}, $Lemma\, 2.14$)  
Let Assumption
\ref{assu:Omega-mu-tau-fundamental} hold for $(\Omega,\sF,\P,\tau)$.
Let $\bQ\subset\Rm$ be a bounded domain, $\phi\in C(\overline{\bQ})$
and $f\in L^{1}(\Omega;\mupalm)$. Then, for almost every $\omega\in\Omega$
\begin{equation}
\lim_{\eps\rightarrow0}\int_{\bQ}\phi(x)\,f(\tau_{\frac{x}{\eps}}\omega)
\d\muomega^{\eps}(x)=\int_{\bQ}\int_{\Omega}\phi(x)f(\tilde\omega)
\d\mupalm(\tilde\omega)\,
\d x\,.\label{eq:ergodic-continuous}
\end{equation}

\end{lemma}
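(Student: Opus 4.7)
The plan is to reduce the assertion, via rescaling, to a Birkhoff-type ergodic theorem for stationary random measures, and then to handle the continuous weight $\phi$ by uniformly approximating it with step functions on a dyadic partition of $\bQ$. Splitting $f=f^{+}-f^{-}$ and $\phi=\phi^{+}-\phi^{-}$, I would first reduce matters to the case $f,\phi\ge0$. Using Definition \ref{rescaling}, the substitution $y=x/\eps$ converts the left-hand side into
\[
\eps^{m}\int_{\eps^{-1}\bQ}\phi(\eps y)\,f(\tau_{y}\omega)\,\d\muomega(y)\,,
\]
so the problem becomes that of a spatial average of $f(\tau_{y}\omega)$ weighted by a slowly varying function over a large domain.

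The key technical input is a Birkhoff-type theorem for stationary random measures: for every bounded Borel set $B\subset\Rm$ with $\lebesgueL(B)>0$ and every $g\in L^{1}(\Omega,\mupalm)$, for $\P$-a.e.\ $\omega$,
\[
\lim_{R\to\infty}R^{-m}\int_{RB}g(\tau_{y}\omega)\,\d\muomega(y)=\lebesgueL(B)\int_{\Omega}g\,\d\mupalm\,.
\]
This follows by combining Campbell's formula (Theorem \ref{theoremmecke}) with the multiparameter Birkhoff ergodic theorem for the measure-preserving $\Rm$-action $\tau$, along the lines of \cite{Hei17}: the Palm measure $\mupalm$ is precisely calibrated so that the random measure $g(\tau_{y}\omega)\,\d\muomega(y)$ has intensity $\int_{\Omega}g\,\d\mupalm$ per unit Lebesgue volume. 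Restricting attention to the countable family of dyadic cubes with rational vertices and to $g\in\{f,|f|\}$, one obtains a single set $\Omega_{0}\subseteq\Omega$ with $\P(\Omega_{0})=1$ on which this convergence holds simultaneously for all such $(B,g)$.

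Now fix $\omega\in\Omega_{0}$ and $\delta>0$, and partition $\bQ$, up to a boundary layer of Lebesgue measure $O(\delta)$, into dyadic cubes $A_{1}^{\delta},\dots,A_{N(\delta)}^{\delta}$ of side $\delta$ with representatives $x_{i}^{\delta}\in A_{i}^{\delta}$. Setting $\phi_{\delta}:=\sum_{i}\phi(x_{i}^{\delta})\chi_{A_{i}^{\delta}}$, uniform continuity of $\phi$ on $\overline{\bQ}$ yields $\|\phi-\phi_{\delta}\|_{\infty}\to0$ as $\delta\to0$. Applying the ergodic theorem of the previous paragraph with $R=1/\eps$ and $B=A_{i}^{\delta}$ and summing in $i$ gives
\[
\lim_{\eps\to0}\eps^{m}\int_{\eps^{-1}\bQ}\phi_{\delta}(\eps y)\,f(\tau_{y}\omega)\,\d\muomega(y)=\int_{\bQ}\phi_{\delta}(x)\,\d x\cdot\int_{\Omega}f\,\d\mupalm\,,
\]
while the replacement error is controlled by $\|\phi-\phi_{\delta}\|_{\infty}\cdot\eps^{m}\int_{\eps^{-1}\bQ}|f(\tau_{y}\omega)|\,\d\muomega(y)$, whose second factor converges to $\lebesgueL(\bQ)\int_{\Omega}|f|\,\d\mupalm<\infty$ by the same ergodic theorem applied with $g=|f|$ and $B=\bQ$. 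Passing to the limit first in $\eps$ and then in $\delta$, and using $\int_{\bQ}\phi_{\delta}\,\d x\to\int_{\bQ}\phi\,\d x$, delivers the identity. I expect the main obstacle to be the random-measure ergodic theorem itself: the exceptional $\P$-null set a priori depends on the test set $B$ and on $g$, and one must arrange a single full-measure set that works uniformly; the remedy is precisely the restriction to the countable family of rational dyadic cubes together with $g\in\{f,|f|\}$, after which the approximation part of the argument reduces to standard bookkeeping.
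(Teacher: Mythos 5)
The paper offers no proof of this lemma at all---it is quoted verbatim from \cite{Hei17} (Lemma 2.14)---and your argument reproduces the standard proof used there: reduce by rescaling to a Birkhoff-type ergodic theorem for the stationary additive process $B\mapsto\int_{B}g(\tau_{y}\omega)\,\d\muomega(y)$, whose intensity is identified via Campbell's formula as $\mathcal{L}(B)\int_{\Omega}g\,\d\mupalm$, then pass from indicator weights on a countable family of dyadic cubes to general $\phi\in C(\overline{\bQ})$ by uniform approximation, controlling the error with the same theorem applied to $|f|$. This is correct and is essentially the approach of the cited source; the only point deserving slightly more care than your sketch gives it is that the key input is not the multiparameter Birkhoff theorem for Lebesgue averages but its extension to stationary additive set functions (Ackoglu--Krengel/Nguyen--Zessin), which is exactly what the Palm-measure calibration is for.
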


A further useful result towards this direction is the following.

\begin{lemma}
\label{lem:Ex-erg-repres} (\cite{Hei17}, $Lemma \,2.15$)  
Let Assumption \ref{assu:Omega-mu-tau-fundamental}
hold for $(\Omega,\sF,\P,\tau)$. Let $\bQ\subset\Rm$ be a bounded
domain and let $f\in L^{\infty}(\bQ\times\Omega;\lebesgueL\otimes\mupalm)$.
Then, $f$ has a $\borelB(\bQ)\otimes\sF$-measurable representative
which is an ergodic function in the sense that for almost every 
$\omega\in\Omega$
\begin{equation}
\begin{aligned}\lim_{\eps\rightarrow0}\int_{\bQ}
f(x,\tau_{\frac{x}{\eps}}\omega)\,\d\muomega^{\eps}(x) & 
=\int_{\bQ}\int_{\Omega}f(x,\tilde{\omega})\,\d\mupalm(\tilde{\omega})\,
\d x\,,\\
\lim_{\eps\rightarrow0}\int_{\bQ}\left|f(x,\tau_{\frac{x}{\eps}}\omega)
\right|^{p}\,\d\muomega^{\eps}(x) & =\int_{\bQ}\int_{\Omega}
\left|f(x,\tilde{\omega})\right|^{p}\,\d\mupalm(\tilde{\omega})\,\d x\,
\end{aligned}
\label{eq:ergodicity-general-functions}
\end{equation}
for every $1\leq p<\infty$. 
\end{lemma}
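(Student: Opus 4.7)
The second identity in \eqref{eq:ergodicity-general-functions} is obtained from the first by applying it to $|f|^{p}\in L^{\infty}(\bQ\times\Omega,\lebesgueL\otimes\mupalm)$, so I focus on the first. The strategy is a density argument: approximate $f$ by functions to which Lemma~\ref{lem:ergodicity-cont-functions} applies directly and bound the approximation error through Campbell's formula.

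Using the separability of $\Omega$ and the $\sigma$-finiteness of $\mupalm$, I fix a countable family $\{h_{n}\}_{n\in\N}$ of finite sums $h_{n}(x,\omega)=\sum_{i=1}^{k_{n}}\phi_{n,i}(x)g_{n,i}(\omega)$ with $\phi_{n,i}\in C(\overline{\bQ})$ and $g_{n,i}\in L^{\infty}(\Omega,\mupalm)\cap L^{1}(\Omega,\mupalm)$, which is dense in $L^{1}(\bQ\times\Omega;\lebesgueL\otimes\mupalm)$. Applying Lemma~\ref{lem:ergodicity-cont-functions} to each product $\phi_{n,i}g_{n,i}$ and to the constant pair $(1,1)$, I extract a $\P$-full measure set $\Omega_{0}$ on which
\begin{equation*}
\lim_{\eps\to0}\int_{\bQ}h_{n}(x,\tau_{x/\eps}\omega)\,\d\muomega^{\eps}=\int_{\bQ}\!\int_{\Omega}h_{n}\,\d\mupalm\,\d x\quad(\forall n\in\N)\,,\qquad\lim_{\eps\to0}\muomega^{\eps}(\bQ)=\lebesgueL(\bQ)\,\mupalm(\Omega)\,.
\end{equation*}

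For $f\in L^{\infty}$, I choose $h_{n}\to f$ in $L^{1}(\bQ\times\Omega;\lebesgueL\otimes\mupalm)$ and set $G_{\eps}^{n}(\omega):=\int_{\bQ}|f-h_{n}|(x,\tau_{x/\eps}\omega)\,\d\muomega^{\eps}$. The triangle inequality
\begin{equation*}
\Bigl|\int_{\bQ}f(x,\tau_{x/\eps}\omega)\,\d\muomega^{\eps}-\int_{\bQ}\!\int_{\Omega}f\,\d\mupalm\,\d x\Bigr|\leq G_{\eps}^{n}(\omega)+\Bigl|\int_{\bQ}h_{n}\,\d\muomega^{\eps}-\int_{\bQ}\!\int_{\Omega}h_{n}\Bigr|+\|f-h_{n}\|_{L^{1}}
\end{equation*}
reduces the task to controlling $G_{\eps}^{n}(\omega)$ as $n\to\infty$ for $\P$-a.e.\ $\omega$, since the middle term vanishes as $\eps\to0$ for $\omega\in\Omega_{0}$. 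Applying Campbell's formula (Theorem~\ref{theoremmecke}) to the rescaled random measure $\muomega^{\eps}$ produces the key identity
\begin{equation*}
\int_{\Omega}G_{\eps}^{n}(\omega)\,\d\P(\omega)=\|f-h_{n}\|_{L^{1}(\bQ\times\Omega;\lebesgueL\otimes\mupalm)}\,,
\end{equation*}
which is independent of $\eps$. Extracting a subsequence $n_{k}$ with $\|f-h_{n_{k}}\|_{L^{1}}\leq 4^{-k}$ and combining Markov's inequality with a Borel--Cantelli argument along a countable dense sequence $\{\eps_{j}\}\subset(0,1]$ with $\eps_{j}\to 0$, I obtain a $\P$-full subset $\Omega_{1}\subset\Omega_{0}$ on which $\lim_{k\to\infty}G_{\eps_{j}}^{n_{k}}(\omega)=0$ for every $j$; leveraging the a.s.\ boundedness of $\muomega^{\eps}(\bQ)$ on $\Omega_{0}$ (which gives $G_{\eps}^{n_{k}}\leq(\|f\|_{\infty}+\|h_{n_{k}}\|_{\infty})\muomega^{\eps}(\bQ)$ uniformly in $\eps$), the discrete conclusion upgrades to $\limsup_{\eps\to0}G_{\eps}^{n_{k}}(\omega)\to 0$ as $k\to\infty$, and the result follows by letting $\eps\to 0$ then $k\to\infty$ in the triangle inequality.

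\textbf{Main obstacle.} The critical step is the last one: upgrading the mean-over-$\omega$ Campbell bound on $G_{\eps}^{n}$, which is $\eps$-uniform only in the $L^{1}(\P)$-sense, to a pointwise a.s.\ statement on $\limsup_{\eps\to 0}G_{\eps}^{n_{k}}(\omega)$. A straight Fatou argument controls only the $\liminf$, so one must exploit the geometric rate $\|f-h_{n_{k}}\|_{L^{1}}\leq 4^{-k}$ to make a Borel--Cantelli argument succeed along a countable dense $\eps$-sequence, and then use the a.s.\ convergence $\muomega^{\eps}(\bQ)\to\lebesgueL(\bQ)\mupalm(\Omega)$ on $\Omega_{0}$, together with the $L^{\infty}$-boundedness of $f-h_{n_{k}}$, to pass from the discrete $\eps_{j}$'s to the continuous limit $\eps\to 0$. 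Care is also required in selecting a jointly Borel-measurable representative of $f$ so that $(x,\omega)\mapsto f(x,\tau_{x/\eps}\omega)$ is measurable and Campbell's formula genuinely applies.
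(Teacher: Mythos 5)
The paper itself offers no proof of this lemma (it is quoted verbatim from \cite{Hei17}, Lemma 2.15), so your argument has to stand on its own --- and it has a genuine gap at precisely the step you flag as the ``main obstacle''. After Campbell's formula gives $\int_\Omega G^n_\eps\,\d\P = \|f-h_n\|_{L^1}$ uniformly in $\eps$, your Markov/Borel--Cantelli argument yields, for $\P$-a.e.\ $\omega$, only that $G^{n_k}_{\eps_j}(\omega)\to0$ as $k\to\infty$ for each \emph{fixed} member $\eps_j$ of a countable sequence. This says nothing about $G^{n_k}_\eps(\omega)$ for $\eps$ between the $\eps_j$'s: the map $\eps\mapsto G^{n}_\eps(\omega)=\eps^m\int_{\eps^{-1}\bQ}|f-h_n|(\eps y,\tau_y\omega)\,\d\muomega(y)$ has no continuity or monotonicity in $\eps$, since $|f-h_n|$ is merely measurable in $x$ and $\muomega$ may be singular (the lemma is applied in the paper with $\muomega=\mugammaomega$, a surface measure, for which $\eps\mapsto\int\chi_E(\eps y)\,\d\muomega(y)$ is genuinely discontinuous for general Borel $E$). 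The substitute bound you invoke, $G^{n_k}_\eps\le(\|f\|_\infty+\|h_{n_k}\|_\infty)\,\muomega^\eps(\bQ)$, is uniform in $\eps$ but does \emph{not} tend to zero in $k$: its $\limsup_{\eps\to0}$ is of order $\|f\|_\infty\,\lebesgueL(\bQ)\,\mupalm(\Omega)$, so it cannot upgrade the discrete conclusion to $\limsup_{\eps\to0}G^{n_k}_\eps(\omega)\to0$.

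What is missing is a maximal inequality, i.e.\ an estimate of the form $\P\bigl(\sup_{0<\eps<1}G^n_\eps>\lambda\bigr)\le C\lambda^{-1}\|f-h_n\|_{L^1(\bQ\times\Omega)}$ --- a Wiener-type transferred Hardy--Littlewood inequality for the additive set function $A\mapsto\int_A|f-h_n|(x,\tau_x\omega)\,\d\muomega(x)$ --- which controls all scales simultaneously and lets Borel--Cantelli run over $k$ alone, with no need for the dense $\eps$-sequence. Such an ingredient (or a replacement, e.g.\ the Lebesgue-point construction of the good representative, which is also where the ``measurable representative'' clause of the statement earns its keep) is essentially unavoidable in ergodic theorems with a continuous scaling parameter; without it the density argument does not close. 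The remainder of your plan --- reducing the $|f|^{p}$ identity to the case $p=1$, the tensor-product density in $L^{1}(\bQ\times\Omega)$, the application of Lemma \ref{lem:ergodicity-cont-functions} to the approximants, and the Campbell identity for the rescaled measure $\muomega^\eps$ --- is sound.
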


Based on the previous lemma, we can get the following result:
\begin{lemma}\label{lem:Ex-erg-repres-general}
Let Assumption \ref{assu:Omega-mu-tau-fundamental} hold for 
$(\Omega,\sF,\P,\tau)$.
Let $\bQ\subset\R^m$ be a bounded domain and let 
$f\in L^{\infty}(\bQ\times\Omega;\lebesgueL\otimes\mupalm)$.
Then, $f$ has a $\borelB(\bQ)\otimes\sF$-measurable representative
which is an ergodic function in the sense that for almost every 
$\omega\in\Omega$
and for all $\varphi\in C(\overline{\bQ})$ it holds 
\begin{equation}
\begin{aligned}\lim_{\eps\rightarrow0}
\int_{\bQ}f(x,\tau_{\frac{x}{\eps}}\omega)\varphi(x)\,\d\muomega^{\eps}(x) & =
\int_{\bQ}\int_{\Omega}f(x,\tilde{\omega})\varphi(x)\,
\d\mupalm(\tilde{\omega})\,\d x\,,\\
\lim_{\eps\rightarrow0}\int_{\bQ}\left|f(x,\tau_{\frac{x}{\eps}}\omega)
\right|^{p}\varphi(x)\,\d\muomega^{\eps}(x) & =
\int_{\bQ}\int_{\Omega}\left|f(x,\tilde{\omega})\right|^{p}
\varphi(x)\,\d\mupalm(\tilde{\omega})\,\d x
\end{aligned}
\label{eq:ergodicity-general-functions-test}
\end{equation}
for every $1\leq p<\infty$.
\end{lemma}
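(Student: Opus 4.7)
My strategy is to derive Lemma \ref{lem:Ex-erg-repres-general} from Lemma \ref{lem:Ex-erg-repres} by absorbing the continuous weight $\varphi$ into the $L^{\infty}$ integrand and then handling the $\varphi$-dependence of the exceptional $\omega$-set through a density argument. Since $\overline{\bQ}$ is a compact metric space, $C(\overline{\bQ})$ with the sup norm is separable; fix, once and for all, a countable dense subset $\{\varphi_{n}\}_{n\in\N}$. Let $\tilde f$ be any $\borelB(\bQ)\otimes\sF$-measurable representative of $f$ (for instance the one produced by Lemma \ref{lem:Ex-erg-repres}). Since every $\varphi_{n}$ is continuous and bounded on $\overline{\bQ}$, both $\tilde f\varphi_{n}$ and $|\tilde f|^{p}\varphi_{n}$ belong to $L^{\infty}(\bQ\times\Omega;\lebesgueL\otimes\mupalm)$ and thus fall within the scope of Lemma \ref{lem:Ex-erg-repres}.

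Applying Lemma \ref{lem:Ex-erg-repres} to each of the countably many integrands $\tilde f\varphi_{n}$ and $|\tilde f|^{p}\varphi_{n}$ and collecting the countably many exceptional sets into a single $\P$-null set $N\subset\Omega$, we obtain that for every $\omega\notin N$ and every $n$,
\begin{equation*}
\lim_{\eps\to 0}\int_{\bQ}\tilde f(x,\tau_{x/\eps}\omega)\varphi_{n}(x)\,\d\muomega^{\eps}(x)=\int_{\bQ}\int_{\Omega}\tilde f(x,\tilde\omega)\varphi_{n}(x)\,\d\mupalm(\tilde\omega)\,\d x,
\end{equation*}
together with the analogous identity for $|\tilde f|^{p}$. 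For an arbitrary $\varphi\in C(\overline{\bQ})$, pick a subsequence $\varphi_{n_{k}}\to\varphi$ uniformly and decompose
\begin{equation*}
\int_{\bQ}\tilde f(x,\tau_{x/\eps}\omega)\varphi(x)\,\d\muomega^{\eps}(x)=\int_{\bQ}\tilde f(x,\tau_{x/\eps}\omega)\varphi_{n_{k}}(x)\,\d\muomega^{\eps}(x)+R_{k,\eps}(\omega),
\end{equation*}
where $|R_{k,\eps}(\omega)|\leq\|\tilde f\|_{\infty}\|\varphi-\varphi_{n_{k}}\|_{\infty}\,\muomega^{\eps}(\bQ)$. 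Lemma \ref{lem:ergodicity-cont-functions} applied with $\phi\equiv 1$ and $f\equiv 1$ (equivalently, Lemma \ref{lem:Ex-erg-repres} with integrand identically $1$) gives $\muomega^{\eps}(\bQ)\to|\bQ|\mupalm(\Omega)$ for $\omega$ in a set of full $\P$-measure, which we absorb into $N$; hence $\muomega^{\eps}(\bQ)$ is uniformly bounded in $\eps$. Letting first $\eps\to 0$ using the previously established ergodic identity for $\varphi_{n_{k}}$, then $k\to\infty$, and invoking dominated convergence on the right-hand side (dominated by $\|\tilde f\|_{\infty}\mupalm(\Omega)\|\varphi\|_{\infty}$) yields the first identity in \eqref{eq:ergodicity-general-functions-test}. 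The $|\tilde f|^{p}$ statement is proved verbatim, replacing $\tilde f$ by $|\tilde f|^{p}\in L^{\infty}$.

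The main obstacle is justifying the interchange of the two limits $\eps\to 0$ and $k\to\infty$ above, which is exactly what the uniform bound $\muomega^{\eps}(\bQ)\leq C(\omega)$ supplies. A subsidiary technical point is that Lemma \ref{lem:Ex-erg-repres}, when applied to each $\tilde f\varphi_{n}$, a priori produces its own Borel representative; however, any two Borel representatives of the same $L^{\infty}$-equivalence class differ on a set that is null for $\lebesgueL\otimes\mupalm$, and by Theorem \ref{theoremmecke} (Campbell's formula) the push-forward of $\muomega^{\eps}\otimes\P$ under $(x,\omega)\mapsto(x,\tau_{x/\eps}\omega)$ is absolutely continuous with respect to $\lebesgueL\otimes\mupalm$ on $\bQ\times\Omega$. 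Hence, along any countable sequence $\eps_{j}\to 0$, the integrals agree for $\P$-a.e.\ $\omega$, and the identity transfers back to the fixed representative $\tilde f\varphi_{n}$.
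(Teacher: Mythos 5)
Your proof is correct and follows essentially the same route as the paper's own (two-line) argument: separability of $C(\overline{\bQ})$, application of Lemma \ref{lem:Ex-erg-repres} to the countably many products $f\varphi_{n}$ and $|f|^{p}\varphi_{n}$ with a single collected null set, and then uniform approximation $\|\varphi-\varphi_{n_{k}}\|_{\infty}\to0$ with the remainder controlled by the bound $\sup_{\eps}\muomega^{\eps}(\bQ)<\infty$ — you merely make explicit the steps the paper leaves implicit. The one point you gloss over (as does the paper) is that the second identity ranges over the uncountable family $p\in[1,\infty)$, so strictly one should run the argument for rational $p$ only and then recover all $p$ with a single null set by noting that $p\mapsto t^{p}$ is continuous uniformly for $t$ in the (bounded) essential range of $|f|$, so both sides of the identity depend continuously on $p$.
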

\begin{proof}
This follows from the fact that $C(\overline{\bQ})$ is separable
and Lemma \ref{lem:Ex-erg-repres} yields 
(\ref{eq:ergodicity-general-functions-test})
for a countable subset of $C(\overline{\bQ})$ and a set of full measure
$\tilde{\Omega}\subset\Omega$. 
By an approximation $\left\Vert \varphi-\varphi_{\delta}\right\Vert _{\infty}<
\delta$
and Lemma \ref{lem:Ex-erg-repres} we obtain the claim.
\end{proof}

\subsection{Random measures and random sets.}\label{sec:rand-meas-an-sets}
In this paper, we consider random sets of the following form. For
every $\omega\in\Omega$ the set $G(\omega)$ is an open
subset of $\Rm$. The boundary $\Gamma(\omega)=\partial G(\omega)$
is a $(m-1)$-dimensional piece-wise Lipschitz manifold. Furthermore,
we assume that the measures 
\[
\muomega(A):=\int_{A\cap G^\complement(\omega)}\d x\,,
\qquad\mugammaomega(A):=\mathcal{H}^{m-1}(A\cap\Gamma(\omega))
\]
are stationary. Hence, by Theorem \ref{theoremmecke} there exist
corresponding Palm measures $\mupalm$ for $\muomega$ and $\mugammapalm$
for $\mugammaomega$ and by Lemma 2.14 of \cite{Hei11}  there exists
a measurable set $\Gamma\subset\Omega$ with $\chi_{\Gamma(\omega)}(x)=
\chi_{\Gamma}(\tau_{x}\omega)$
for $\lebesgueL+\mugammaomega$-almost every $x$ for $\P$-almost
every $\omega$ and $\P(\Gamma)=0$, $\mugammapalm(\Omega\backslash\Gamma)=0$.
Also it was observed there that, if for every $\omega$ we have 
$\muomega=\lebesgueL$, then also $\mupalm=\P$. 
From the corresponding proofs in \cite{Hei11}, as well as the fact 
that $\muomega$
has a Radon-Nikodym derivative with respect to $\lebesgueL$, we find
$G\subset\Omega$ such that $\mupalm(A)=\P(A\cap G^\complement)$, 
$\chi_{G^\complement(\omega)}(x)=\chi_{G^\complement}(\tau_{x}\omega)$ and
\begin{equation}\label{eq:representation-muPalm}
    \chi_{G^\complement}\d \mupalm=d\mupalm=\chi_{G^\complement}\d\P\,.
\end{equation}
\begin{remark} If $A$ is a bounded Borel set, then
\begin{equation}\label{rescaling 2}
\mu_{\Gamma(\omega)}^{\varepsilon}(A):={\varepsilon}^m \,\mu_{\Gamma(\omega)}
({\varepsilon}^{-1}
\, A) = \varepsilon \mathcal{H}^{m-1}(A\cap \Gamma^\varepsilon(\omega)) .
\end{equation}

\end{remark}

It was shown in \cite{Hei11} that for random measures such as 
$\mu_\omega$ or $\mu_{\Gamma(\omega)}$ the underlying 
probability space can be assumed to be separable and metric, since
the boundedly finite Borel measures
equipped with the Vague topology form a separable metric space \cite{DV}.
It was also pointed out in \cite{Hei11} that $\tau: (x, \omega) \mapsto
\tau_x \omega$ is continuous.

\begin{remark} \label{rem:es-count-dense-set}
If $\Omega$ is separable and metric, this implies that $L^2(\Omega;\P)$ and 
$L^2(\Omega,\mugammapalm)$
are separable and that the bounded continuous functions $C_b(\Omega)$ are
dense in both spaces. Therefore, there exists a countable set 
$\Psi:=\left(\psi_i\right)_{i\in\N}$ such that $\psi_i\in C_b(\Omega)$  
for every $i$ and such that $\Psi$
lies dense in $L^2(\Omega;\P)$ and $L^2(\Omega,\mugammapalm)$. Furthermore,
recalling \eqref{eq:def-smoothing-omega} and approximating $\psi_i$ with the 
sequence $\sI_{\frac1n}\psi$, $n\in\N$, 
we can assume that 
$\psi_i\in W^{1,2}(\Omega)\cap C_b(\Omega)$.
The space $\cV^2_\pot (\Omega)$ is a subspace of a separable space and hence 
has to be separable, too. In particular $\nabla \psi_i$ can be assumed to be 
dense in $\cV^2_\pot (\Omega)$. We then define
$$\Psi=\left(\psi_i\right)_{i\in\N}\bigcup_{j=1}^m\left(\rmD_j
\psi_i\right)_{i\in\N}\,.$$
\end{remark}
Since $\Omega$ is assumed to be separable metric, we can also make the 
following definition.
\begin{definition}
The space of bounded continuously differentiable functions on $\Omega$ is 
\begin{align*}
C_b^1(\Omega)&:=\left\{f\in C_b(\Omega)\,:\;\rmD f\in C_b (\Omega) \right\}    \\
\norm{f}_{C^1_b(\Omega)}&:=\norm{f}_{\infty}+\norm{\rmD f}_{\infty}.
\end{align*}
\end{definition}
Let us remark that, since $(x, \omega) \mapsto \tau_x \omega$ is continuous,
$f \in C_b^1(\Omega)$ implies $f(\tau_x \omega) \in C_b^1(\Rm) \, \, \forall
\omega \in \Omega$.
Concerning the random geometries considered in this work, we make the 
assumptions listed below.

\begin{definition}[See \cite{GK}]\label{def:minimally-smooth}
An open set $G\subset\Rm$ is said to be minimally smooth with constants 
$(\delta,N,M)$ if we may cover $\Gamma=\partial G$ by a countable sequence of 
open sets $\left(U_i\right)_{i\in\N}$ such that
\begin{enumerate}
\item[1)] Each $x\in\Rm$ is contained in at most $N$ of the open sets $U_i$.
\item[2)] For any $x\in\Gamma$, the ball $B_\delta(x)$ is contained in at 
least one $U_i$.
\item[3)] For any $i$, the portion of the boundary $\Gamma$ inside $U_i$ agrees 
(in some Cartesian system of coordinates) with the graph of a Lipschitz 
function whose Lipschitz semi-norm is at most $M$.
\end{enumerate}
\end{definition}
In particular a set $G\subset\Rm$ is minimally smooth if and only if 
$\Rm\setminus G$ is minimally smooth.

Let $\bQ$ be a bounded domain in $\Rm$. For given constants $(\delta,N,M)$,
we consider $G(\omega)$ a random open set which is a.s. minimally smooth
with constants $(\delta,N,M)$ (uniformly minimally smooth).
We furthermore assume that $G(\omega):=\bigcup_{i\in\N}G_i(\omega)$
is a countable union of disjoint open balls $G_i(\omega)$ with a
maximal diameter $d_0$.

We then consider $G^\eps (\omega):=\eps G(\omega)$ and
\begin{equation}\label{eq:def-Q-eps-Gamma-eps}
  \bQ^\eps(\omega):=\bQ\backslash \left(\bigcup_{i\in I_\eps(\omega)} 
\eps G_i(\omega)\right)\,,\qquad 
  \Gamma_{\bQ}^\eps (\omega):=\bigcup_{i\in I_\eps(\omega)} 
\partial (\eps G_i(\omega))\,,
\end{equation}
where
$$ I_\eps(\omega):=\left\{i\,:\;\eps G_i(\omega)\subset\bQ\text{ and } 
\eps d_0<\min\left\{d(x,y)\,:\;x\in\partial (\eps G_i(\omega)),\,
y\in\partial\bQ\right\} \right\}\,.$$

\begin{remark}\label{7 feb:1}
Note that we constructed the micro structures $\bQ\backslash\bQ^\eps(\omega)$
such that they do not intersect
with the boundary of $\bQ$ and such that every hole in $\bQ^\eps(\omega)$
has a minimal distance $\eps d_0$ to $\partial\bQ$. This is because we require
in our proofs that $\eps^{-1}\bQ^\eps(\omega)$ is a 
$(\delta,N,M)$- minimal set
(or $\bQ^\eps(\omega)$ is a $(\delta\eps,N,\eps^{-1}M)$ minimal set, 
respectively).
In particular, without the minimal distance between two disjoint parts of the 
boundary,
the resulting set $\bQ^\eps(\omega)$ would violate condition $3)$ from 
Definition \ref{def:minimally-smooth},
i.e. $\partial \bQ^\eps(\omega)$ would not be a $\eps^{-1}M$-Lipschitz graph 
inside balls of diameter $\frac\eps2 d_0$.
\end{remark}

\begin{assumption} \label{1}
There are constants $d_0, \delta, N, M$ (independent of $\omega$) such that
$\mathbb{P}$-a.s.  the set $G(\omega)$ consists of a
countable union of bounded sets $G_k(\omega) \, \, (k \in \mathbb{N})$
such that the sets $\mathbb{R}^m \setminus G_k(\omega)$ are all connected, 
while
$$d(G_k(\omega), G_j(\omega)) \geq d_0 \; \; \; \text{ whenever } \, \,
k \ne j,$$
and each set $G_k(\omega)$ is minimally smooth with constants
($\delta, N, M$) and has a diameter smaller
than $d_0$. The Lipschitz constant is uniformly over all $G_k$.
\end{assumption}

\begin{remark}\label{7 feb:2}
In particular, this guarantees that $\Rm\setminus G(\omega)$ is connected 
and has
a Lipschitz boundary $\partial G$, which represents the union
of the boundaries of the holes. 
Furthermore, the distance condition ensures that 
the boundary of $G(\omega)$ is locally representable as a graph.
\end{remark}

\begin{lemma} \label{lem:extension-Op}
Suppose that Assumption \ref{1} is satisfied.
Then, there exists a family of linear continuous extension operators

$$\sE_\eps\,:\;W^{1,p}(\bQ^\eps)\to W^{1,p}(\bQ)$$ 
and a constant $C >0$ independent of $\varepsilon$ such that

$$\sE_\eps \phi=\phi \, \, \, \text{in} \, \, \bQ^\eps(\omega)$$
and

\begin{equation} \label{2.1}
\int_{\bQ} \vert \sE_\eps \phi \vert^p \, dx \leq C \, \int_{\bQ^\eps}
\vert \phi \vert^p \, dx,
\end{equation}
\begin{equation} \label{2.2}
\int_{\bQ} \vert \nabla (\sE_\eps \phi) \vert^p \, dx \leq C \, \int_{\bQ^\eps}
\vert \nabla \phi \vert^p \, dx,
\end{equation}
$\P$-a.s. for any $\phi \in W^{1,p}(\bQ^\eps)$ and for any $p \in (1, +\infty)$. 
\end{lemma}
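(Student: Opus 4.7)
The plan is a classical localize-glue-rescale argument that exploits the scale invariance of the Lipschitz condition. Working first at scale $1$: by Assumption \ref{1}, $\mathbb{P}$-a.s.\ the components $G_k(\omega)$ have diameter $\leq d_0$, are pairwise at distance $\geq d_0$, and are uniformly minimally smooth with constants $(\delta, N, M)$. For each $k$ set $U_k := \{x\in\Rm : d(x,G_k(\omega)) < d_0/4\}$; the separation hypothesis makes the $U_k$ pairwise disjoint. Since $U_k\setminus\overline{G_k(\omega)}$ is a bounded Lipschitz domain whose Lipschitz constant is controlled by $M$, the Calder\'on--Stein extension theorem furnishes a linear operator $T_k : W^{1,p}(U_k\setminus\overline{G_k(\omega)}) \to W^{1,p}(U_k)$ with operator norm bounded by a single constant $C_0 = C_0(m,p,M,N,\delta,d_0)$, independent of $k$ and $\omega$. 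Picking cutoffs $\chi_k \in C_c^\infty(U_k)$ equal to $1$ on $\{d(\cdot,G_k(\omega))<d_0/8\}$ with $\|\nabla\chi_k\|_\infty \leq C/d_0$, and defining
\[
\sE v := v + \sum_k \chi_k \bigl(T_k v - v\bigr)
\]
for $v\in W^{1,p}(\Rm\setminus G(\omega))$, the disjointness of the $U_k$ makes the sum locally finite and yields $\sE v\in W^{1,p}(\Rm)$ with an operator norm controlled only by $(m,p,M,N,\delta,d_0)$.

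To obtain $\sE_\eps$, given $\phi\in W^{1,p}(\bQ^\eps(\omega))$, set $\tilde\phi(y) := \phi(\eps y)$ on $\eps^{-1}\bQ^\eps(\omega)$. By Remark \ref{7 feb:1} and the definition of $I_\eps(\omega)$, the unscaled holes $G_i(\omega)$ with $i\in I_\eps(\omega)$ lie inside $\eps^{-1}\bQ$ at distance $\geq d_0$ from $\partial(\eps^{-1}\bQ)$, so the neighborhoods $U_i$ and their cutoffs $\chi_i$ can all be taken supported strictly inside $\eps^{-1}\bQ$; the extension does not perturb the boundary trace on $\partial\bQ$. Apply the scale-$1$ operator of the previous paragraph (with the sum restricted to $i\in I_\eps(\omega)$) to $\tilde\phi$, and define $(\sE_\eps\phi)(x) := (\sE\tilde\phi)(x/\eps)$. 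The change of variables $x=\eps y$ gives $\|u\|_{L^p(\eps A)}^p = \eps^m \|u(\eps\,\cdot)\|_{L^p(A)}^p$ and $\|\nabla u\|_{L^p(\eps A)}^p = \eps^{m-p}\|\nabla(u(\eps\,\cdot))\|_{L^p(A)}^p$, so the factors of $\eps$ cancel on both sides of \eqref{2.1} and \eqref{2.2} and the constant $C$ one obtains is the scale-$1$ constant, hence independent of $\eps$.

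The main obstacle is showing that $C_0$ is genuinely uniform in $\omega$, $k$, and (after rescaling) in $\eps$. This is precisely what Assumption \ref{1} is engineered to provide: the uniform Lipschitz constant $M$, the finite multiplicity $N$, the uniform scale $\delta$, the uniform diameter bound $d_0$, and the uniform separation $d_0$ collectively bound every constant appearing in Stein's local extension and in the partition-of-unity gluing. Because the Lipschitz seminorm of a graph is invariant under dilation, the passage $G_k(\omega)\mapsto \eps G_k(\omega)$ preserves $M$ (only $\delta$ is rescaled to $\eps\delta$, and $\delta$ has been absorbed at scale $1$ once and for all); this is the single structural fact that closes the argument and makes the extension constant in \eqref{2.1}--\eqref{2.2} $\eps$-independent.
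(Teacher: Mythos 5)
Your overall strategy (localize around each hole at unit scale, extend with Calder\'on--Stein, glue, rescale) is the same as the paper's, which follows Guillen--Kim, and your treatment of the uniformity of the constants and of the rescaling of the $L^p$ norms is fine. But there is a genuine gap in the gradient estimate. Your scale-$1$ operator only satisfies the inhomogeneous bound $\Vert \nabla (T_k v)\Vert_{L^p(U_k)} \le C_0 \Vert v\Vert_{W^{1,p}(U_k\setminus\overline{G_k(\omega)})}$: Stein's theorem controls the full $W^{1,p}$ norm, not the gradient seminorm alone, so the right-hand side necessarily contains $\Vert v\Vert_{L^p}$. When you undo the scaling $\tilde\phi(y)=\phi(\eps y)$, the two terms scale differently: $\Vert\nabla(\sE_\eps\phi)\Vert^p_{L^p(\bQ)} = \eps^{m-p}\Vert\nabla(\sE\tilde\phi)\Vert^p_{L^p}$, while $\eps^{m-p}\Vert\tilde\phi\Vert^p_{L^p}=\eps^{-p}\Vert\phi\Vert^p_{L^p(\bQ^\eps)}$. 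Hence your construction only yields
\begin{equation*}
\int_{\bQ}\vert\nabla(\sE_\eps\phi)\vert^p\,dx \;\le\; C\left(\eps^{-p}\int_{\bQ^\eps}\vert\phi\vert^p\,dx+\int_{\bQ^\eps}\vert\nabla\phi\vert^p\,dx\right),
\end{equation*}
which is not \eqref{2.2}; the constant blows up as $\eps\to0$. The cancellation of the powers of $\eps$ that you invoke for \eqref{2.2} is only valid if the scale-$1$ estimate is homogeneous in the gradient, and your construction does not provide that.

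The missing idea, which is exactly what the paper does, is to subtract the local mean before extending: set $(\phi)_k$ equal to the average of $\phi$ over the annulus $\hat G_k(\omega)\setminus G_k(\omega)$ and define $\hat E_k\phi:=E_k(\phi-(\phi)_k)+(\phi)_k$. Then $\nabla(\hat E_k\phi)=\nabla(E_k(\phi-(\phi)_k))$, and the Stein bound gives $\int_{G_k}\vert\nabla(\hat E_k\phi)\vert^p\le C\int_{\hat G_k\setminus G_k}\left(\vert\phi-(\phi)_k\vert^p+\vert\nabla\phi\vert^p\right)$; the Poincar\'e inequality on the annulus (with a constant uniform in $k$ and $\omega$ by Assumption \ref{1}) converts the zeroth-order term into $C\int\vert\nabla\phi\vert^p$, producing the genuinely homogeneous scale-$1$ gradient estimate. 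Only then does your rescaling argument deliver \eqref{2.2} with $C$ independent of $\eps$. With this correction inserted, the rest of your argument goes through and coincides with the paper's proof.
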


\begin{proof}
Following the line of the proof reported in \cite{GK} (Proposition $3.3$,
p. $230$),
for any $k \in \mathbb N$, $\omega \in \Omega$, we denote by 
$\hat{G}_k (\omega)$ a $d_0/4$-neighborhood
of $G_k (\omega)$ (the sets $G_k (\omega)$ are defined in Assumption \ref{1}).
Since, under our assumptions, the set $\hat{G}_k (\omega) \backslash 
G_k (\omega)$ has
Lipschitz boundary, then, according to Theorem $5$, p. $181$ in \cite{Stein}, 
there exists
an extension operator $E_k$

\begin{equation} \label{2.3}  
E_k: W^{1,p} (\hat{G}_k (\omega) \backslash G_k (\omega)) \rightarrow 
W^{1,p} (\hat{G}_k (\omega))
\end{equation}
such that: $E_k \phi=\phi$ a.e. in $\hat{G}_k (\omega) \backslash G_k (\omega)$
and, for some constant $C$ independent of $k$, we have

\begin{equation} \label{2.4}
\Vert E_k \phi \Vert_{L^p(\hat{G}_k (\omega))} \leq C \, \Vert \phi 
\Vert_{L^p(\hat{G}_k (\omega) \backslash G_k (\omega))}
\end{equation}
\begin{equation} \label{2.5}
\Vert E_k \phi \Vert_{W^{1,p}(\hat{G}_k (\omega))} \leq C \, \Vert \phi
\Vert_{W^{1,p}(\hat{G}_k (\omega) \backslash G_k (\omega))}.
\end{equation}
Let us define new extensions

\begin{equation} \label{2.6}
\hat{E}_k: W^{1,p}(\hat{G}_k (\omega) \backslash G_k (\omega)) \rightarrow
W^{1,p}(G_k (\omega))
\end{equation}
by

\begin{equation} \label{2.7}
\hat{E}_k \phi:= E_k (\phi- (\phi)_k)+(\phi)_k
\end{equation}
where

\begin{equation} \label{2.8}
(\phi)_k:= \int_{\hat{G}_k (\omega) \backslash G_k (\omega)} \phi \, dy
\end{equation}
Putting them all together, we define an extension

\begin{equation} \label{2.9}
\sE: W^{1,p} (G^\complement(\omega)) \rightarrow W^{1,p} (\bQ)
\end{equation}
given by

\begin{equation} \label{2.10}
\sE \phi(y):= \left\{ \begin{array}{rl}
\phi(y) & \text{whenever}\, \,  y \in G^\complement (\omega)  \\
\hat{E}_k \phi(y) & \text{whenever}\, \, y \in \hat{G}_k (\omega).
\end{array} \right.
\end{equation}
Now, in $\hat{G}_k (\omega) \backslash G_k (\omega)$ we have

\begin{equation} \label{2.11}
\hat{E}_k \phi=(\phi-(\phi)_k)+(\phi)_k=\phi.
\end{equation}
Moreover, by (\ref{2.4}) and H\"older's inequality, we have

\begin{equation} \label{2.12}
\begin{split}
&\int_{G_k (\omega)} \vert \hat{E}_k \phi \vert^p \, dy=
\int_{G_k (\omega)} \vert E_k (\phi-(\phi)_k)+(\phi)_k \vert^p \, dy \\
&\leq C \, \int_{G_k (\omega)} \vert E_k (\phi-(\phi)_k) \vert^p \, dy+
C \, \int_{G_k (\omega)} \vert (\phi)_k \vert^p \, dy \\
&\leq C \, \int_{\hat{G}_k(\omega) \backslash G_k(\omega)} 
\vert \phi-(\phi)_k \vert^p \, dy
+C \, \int_{G_k (\omega)} \vert (\phi)_k \vert^p \, dy \\
&\leq C \, \int_{\hat{G}_k(\omega) \backslash G_k(\omega)} \vert \phi \vert^p \, dy+
 C' \, \int_{G_k (\omega)} \vert (\phi)_k \vert^p \, dy \\
&\leq C \, \int_{\hat{G}_k (\omega) \backslash G_k (\omega)} \vert \phi 
\vert^p \,dy
\end{split}
\end{equation}
where, for simplicity, the letter $C$ denotes a positive constant
(independent of $k$) that can change from line to line.
Due to Assumption \ref{1}, the following Poincar\'e inequality holds:

\begin{equation} \label{2.13}
\int_{\hat{G}_k (\omega) \backslash G_k (\omega)} \vert \phi-(\phi)_k \vert^p
\, dy \leq C \, \int_{\hat{G}_k (\omega) \backslash G_k (\omega)}
\vert \nabla \phi \vert^p \, dy
\end{equation}
Therefore, by using (\ref{2.5}), (\ref{2.7}) and (\ref{2.13}), we get

\begin{equation} \label{2.14}
\begin{split}
&\int_{G_k (\omega)} \vert \nabla (\hat{E}_k \phi) \vert^p \, dy=
\int_{G_k (\omega)} \vert \nabla (E_k (\phi-(\phi)_k)) \vert^p \, dy \\
& \leq C \, \int_{\hat{G}_k (\omega) \backslash G_k (\omega)}
\vert (\phi-(\phi)_k) \vert^p \, dy+ C \,
\int_{\hat{G}_k (\omega) \backslash G_k (\omega)} \vert \nabla \phi \vert^p
\, dy \\
&\leq C \, \int_{\hat{G}_k (\omega) \backslash G_k (\omega)} \vert
\nabla \phi \vert^p \, dy
\end{split}
\end{equation}
Since this holds for every $k$ with the same $C$ we have proved that

\begin{equation} \label{2.15}
\int_{\cup_k \hat{G}_k (\omega)} \vert \sE \phi \vert^p \, dy
\leq C \, \int_{\cup_k \hat{G}_k (\omega) \backslash G_k (\omega)}
\vert \phi \vert^p \, dy
\end{equation}

\begin{equation} \label{2.16}
\int_{\cup_k \hat{G}_k (\omega)} \vert \nabla (\sE \phi) \vert^p \, dy
\leq C \, \int_{\cup_k \hat{G}_k (\omega) \backslash G_k (\omega)}
\vert \nabla \phi \vert^p \, dy
\end{equation}
that is,

\begin{equation} \label{2.17}
\int_{\bQ} \vert \sE \phi \vert^p \, dy
\leq C \, \int_{G^\complement(\omega)}
\vert \phi \vert^p \, dy
\end{equation}

\begin{equation} \label{2.18}
\int_{\bQ} \vert \nabla (\sE \phi) \vert^p \, dy
\leq C \, \int_{G^\complement(\omega)}
\vert \nabla \phi \vert^p \, dy.
\end{equation}
By performing the change of variable $y=x/\varepsilon$, with $x \in \bQ^{\eps}
(\omega)$,
it is easy to obtain the
corresponding re-scaled estimates (\ref{2.1}) and (\ref{2.2}),
where $\sE_{\eps}$ is the re-scaled extension operator.

\end{proof}

As a matter of fact, we can describe a portion of the 
cerebral cortex
as a bounded open set $\bQ \subset \mathbb{R}^3$,
whereas the neurons are represented by a family of holes distributed randomly 
in $\bQ$ and having a characteristic size $\eps$.
A detailed construction of random domains that satisfy the assumptions 
listed in this section is reported in Appendix \ref{appA}.

\vskip2mm

\section{Two-scale convergence} \label{sec:two-scale}

We will use a slightly modified version of stochastic two-scale convergence
compared to the one presented in \cite{Hei17}. 
Let $\Psi:=\left(\psi_{i}\right)_{i\in\N}$
be the countable dense family of $C_{b}(\Omega)$-functions according
to Remark \ref{rem:es-count-dense-set}.  

\begin{lemma}\label{lem:f-i-g-i-ergodic}
Let $\left(f_i\right)_{i\in\N}$ be a countable family in 
$L^\infty(\bQ\times\Omega;\lebesgueL\times\P)$ and 
$\left(g_i\right)_{i\in\N}$ be a countable family in 
$L^\infty(\bQ\times\Gamma;\lebesgueL\times\mugammapalm)$. 
Then there exists a set of full measure $\Omega_{\Psi}\subset\Omega$ such that
for almost every $\omega\in\Omega_{\Psi}$, every $i\in\N$, every $\psi\in\Psi$ 
and every
$\varphi\in C_b(\overline{\bQ})$ the following holds:
\begin{equation} \label{dstar1}
\lim_{\eps\to0}\int_{\bQ}\varphi^{2}(x)\psi^{2}(\tau_{\frac{x}{\eps}}\omega)
f_i^2(x,\tau_{\frac{x}{\eps}}\omega)
\d x  =\int_{\bQ}\int_{\Omega}\varphi^{2}(x)\psi^{2}(\tilde\omega)f_i^2(x,\tilde\omega)\,\d\P(\tilde\omega)\,
\d x\,,
\end{equation}
\begin{equation} \label{dstar2}
\lim_{\eps\to0}\int_{\bQ}g_i\left(x,\tau_{\frac{x}{\eps}}\omega\right)\varphi(x)
\psi(\tau_{\frac{x}{\eps}}\omega)\d\mu_{\Gamma(\omega)}^{\eps}(x)  =
\int_{\bQ}\int_{\Omega}g_i(x,\tilde\omega)\varphi(x)\psi(\tilde\omega)\,
\d\mugammapalm(\tilde\omega)\,\d x\,.
\end{equation}
\end{lemma}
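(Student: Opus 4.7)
The plan is to reduce the statement to a countable family of invocations of Lemma \ref{lem:Ex-erg-repres-general}, exploiting the fact that the exceptional null set produced there depends only on the integrand and not on the test function $\varphi\in C(\overline{\bQ})$. I will fold $\psi$ and each $f_i$ (resp.\ $g_i$) into a single composite integrand, leaving $\varphi$ as a free continuous test function, and then take a countable intersection over the indexing pairs $(i,\psi)\in\N\times\Psi$ to produce one common full-measure event $\Omega_{\Psi}$.

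For \eqref{dstar1}, for each pair $(i,\psi)$ I set
\[
F_{i,\psi}(x,\omega):=\psi^{2}(\omega)\,f_{i}^{2}(x,\omega).
\]
Since $\psi\in C_{b}(\Omega)$ is bounded and $f_{i}\in L^{\infty}(\bQ\times\Omega;\lebesgueL\otimes\P)$, one has $F_{i,\psi}\in L^{\infty}(\bQ\times\Omega;\lebesgueL\otimes\P)$. In the bulk case $\muomega\equiv\lebesgueL$ the Palm measure reduces to $\mupalm=\P$ and $\muomega^{\eps}=\lebesgueL$, so Lemma \ref{lem:Ex-erg-repres-general}, applied with $p=1$, integrand $F_{i,\psi}$ and test function $\varphi^{2}\in C(\overline{\bQ})$, delivers a set $\Omega_{i,\psi}^{(1)}\subset\Omega$ of full $\P$-measure on which \eqref{dstar1} holds for every $\varphi\in C(\overline{\bQ})$.

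For \eqref{dstar2}, I set $G_{i,\psi}(x,\omega):=g_{i}(x,\omega)\,\psi(\omega)$, extended by $0$ on $\bQ\times(\Omega\setminus\Gamma)$ (this is harmless because $\mugammapalm(\Omega\setminus\Gamma)=0$). Then $G_{i,\psi}\in L^{\infty}(\bQ\times\Omega;\lebesgueL\otimes\mugammapalm)$, and Lemma \ref{lem:Ex-erg-repres-general}, applied this time with the random surface measure $\mu_{\Gamma(\omega)}$, Palm measure $\mugammapalm$, integrand $G_{i,\psi}$ and test function $\varphi$, yields a full-measure set $\Omega_{i,\psi}^{(2)}\subset\Omega$ on which \eqref{dstar2} holds for every $\varphi\in C(\overline{\bQ})$.

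Finally I set
\[
\Omega_{\Psi}:=\bigcap_{i\in\N,\,\psi\in\Psi}\bigl(\Omega_{i,\psi}^{(1)}\cap\Omega_{i,\psi}^{(2)}\bigr),
\]
which still has full $\P$-measure because $\N\times\Psi$ is countable and a countable intersection of full-measure sets is of full measure. For every $\omega\in\Omega_{\Psi}$, both \eqref{dstar1} and \eqref{dstar2} hold simultaneously for all $i$, all $\psi\in\Psi$ and all $\varphi\in C_{b}(\overline{\bQ})=C(\overline{\bQ})$ (using that $\overline{\bQ}$ is compact). I do not foresee any serious obstacle: the uniformity of the null set in $\varphi$ is already absorbed into Lemma \ref{lem:Ex-erg-repres-general}, so the only genuinely new ingredient is the countability argument over $(i,\psi)$, together with the trivial check that the composite integrands $F_{i,\psi}$ and $G_{i,\psi}$ sit in the appropriate $L^\infty$ spaces.
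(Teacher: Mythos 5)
Your proof is correct and follows essentially the same route as the paper's (one-line) argument: both reduce each identity to Lemma \ref{lem:Ex-erg-repres-general} applied to a composite $L^\infty$ integrand, use that the exceptional null set there is uniform in $\varphi\in C(\overline{\bQ})$, and conclude by intersecting over the countable index set. Your version is merely more explicit about folding $\psi$ into the integrand and intersecting over $\N\times\Psi$, which the paper leaves implicit.
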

\begin{remark}
The first equality \eqref{dstar1} is needed for the proof of existence 
of the two-scale limits. Therefore we put the square here. 
The second limit \eqref{dstar2} is needed directly in the proof of the 
main homogenization theorem. 
Therefore we study the convergence of $g_i$ tested with $\varphi\psi$.
\end{remark}
\begin{proof}({\bf Proof of Lemma \ref{lem:f-i-g-i-ergodic}})
For fixed $i$ the limits \eqref{dstar1} and \eqref{dstar2}  
hold for a.e. $\omega\in\Omega$ due to Lemma \ref{lem:Ex-erg-repres-general}.
Since the family $\left(f_i\right)_{i\in\N}$ is countable, we conclude.\end{proof}

\begin{definition}
\label{def:two-scale-conv} Let $\Psi$ be the set of 
Remark \ref{rem:es-count-dense-set}
and let $\omega\in\Omega_{\Psi}$. Let $\ue\in L^{2}(\bQ)$ for all
$\eps>0$. We say that $(\ue)$ converges (weakly) in two scales to
$u\in L^{2}(\bQ;L^{2}(\Omega))$ and write $\ue\stackrel{2s}{\weakto}u$
if $\sup_{\eps>0}\norm{\ue}_{L^{2}(\bQ)}<\infty$ and if for every
$\psi\in\Psi$, $\varphi\in C(\overline{\bQ})$ there holds with 
$\phi_{\omega,\eps}(x):=\varphi(x)\psi(\tau_{\frac{x}{\eps}}\omega)$ that
\[
\lim_{\eps\to0}\int_{\bQ}\ue(x)\phi_{\omega,\eps}(x)\d x=
\int_{\bQ}\int_{\Omega}u(x,{\tilde \omega})\varphi(x)
\psi(\tilde \omega)\,\d\P(\tilde \omega)\,\d x\,.
\]
\end{definition}
Furthermore, we say that $\ue$ converges strongly in two scales to $u$,
written $\ue \stackrel{2s}{\to} u$, if for every weakly two-scale
converging sequence $v^{\varepsilon} \in L^2(\bQ)$ with
$v^{\varepsilon} \stackrel{2s}{\weakto} v \in L^2(\bQ;L^{2}(\Omega))$ as
$\varepsilon \rightarrow 0$ there holds
\begin{equation} \label{onestar}
\lim_{\eps\to0}\int_{\bQ}\ue v^{\varepsilon}\,\d x=
\int_{\bQ}\int_{\Omega}u \, v \,\d\P(\tilde \omega)\,\d x\,.
\end{equation}

\begin{remark}
Let us remark that the notion of two-scale convergence strongly depends on
the choice of $\omega$. 
Also, let us note that $\phi_{\omega,\eps}\stackrel{2s}{\to}\varphi \psi$
strongly in two scales by definition. 
\end{remark}

\begin{lemma}
(\cite{Hei17}, $Lemma \, 4.4$-$1$.) \label{lem:Existence-ts-lim} 
Let $\ue\in L^{2}(\bQ)$ be a sequence of functions such that 
$\norm{\ue}_{L^{2}(\bQ)}\leq C$
for some $C>0$ independent of $\eps$. Then there exists a subsequence
$(u^{\eps'})_{\eps'\to0}$ and $u\in L^{2}(\bQ;L^{2}(\Omega))$
such that $u^{\eps'}\stackrel{2s}{\weakto}u$ and 
\begin{equation}
\norm u_{L^{2}(\bQ;L^{2}(\Omega))}\leq\liminf_{\eps'\to0}
\norm{u^{\eps'}}_{L^{2}(\bQ)}\,.\label{eq:two-scale-limit-estimate}
\end{equation}
Furthermore, let $\left(f_i\right)_{i\in\N}$ 
be a family of functions such as in Lemma \ref{lem:f-i-g-i-ergodic}. 
Then for every $i\in\N$, $\varphi\in C(\overline\Omega)$ and 
$\psi\in\Psi$ it holds 
\begin{equation}\label{eq:ts-conv-conclusion}
\lim_{\eps\to0}\int_{\bQ}\ue(x)\phi_{\omega,\eps}(x)f_i(x,\tau_{\frac x\eps}
\omega)\d x=
\int_{\bQ}\int_{\Omega}u(x,\tilde \omega)\varphi(x)\psi(\tilde \omega)
f_i(x,\tilde \omega)\,
\d\P(\tilde \omega)\,\d x\,.
\end{equation}
\end{lemma}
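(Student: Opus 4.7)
The plan is to adapt the classical Nguetseng--Allaire two-scale compactness argument to the stochastic setting, taking the countable dense family $\Psi$ of Remark \ref{rem:es-count-dense-set} as the pool of admissible oscillating test functions.

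For the first assertion I would introduce the bilinear functionals
\[
L_\eps(\varphi,\psi):=\int_{\bQ}\ue(x)\,\varphi(x)\,\psi(\tau_{x/\eps}\omega)\,\d x
\]
for $\varphi\in C(\overline{\bQ})$ and $\psi\in\Psi$. A Cauchy--Schwarz estimate combined with Lemma \ref{lem:f-i-g-i-ergodic} applied with $f_i\equiv 1$ yields, for $\omega\in\Omega_\Psi$,
\[
\limsup_{\eps\to 0}\,|L_\eps(\varphi,\psi)|\le \Big(\sup_{\eps}\norm{\ue}_{L^2(\bQ)}\Big)\,\norm{\varphi\otimes\psi}_{L^2(\bQ\times\Omega)}.
\]
Fixing a countable dense family $(\varphi_j)_{j\in\N}\subset C(\overline{\bQ})$ and applying a Cantor diagonal extraction to the countable collection of pairs $(\varphi_j,\psi_i)$, I obtain a subsequence $\eps'\to 0$ along which every $L_{\eps'}(\varphi_j,\psi_i)$ converges. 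The estimate above shows that the resulting map extends by bilinearity and continuity to a bounded linear functional on $L^2(\bQ\times\Omega)$ of norm at most $\liminf_{\eps'}\norm{u^{\eps'}}_{L^2(\bQ)}$; here one uses that finite linear combinations of the $\varphi_j\otimes\psi_i$ are dense in $L^2(\bQ\times\Omega)$, which follows from separability of $C(\overline{\bQ})\subset L^2(\bQ)$ together with the density of $\Psi$ in $L^2(\Omega)$. The Riesz representation theorem then produces a limit $u\in L^2(\bQ;L^2(\Omega))$ realising the two-scale convergence, and the norm bound \eqref{eq:two-scale-limit-estimate} is built into the construction.

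For the enhanced convergence \eqref{eq:ts-conv-conclusion}, I would approximate each $f_i$ in $L^2(\bQ\times\Omega;\lebesgueL\otimes\P)$ by simple tensor sums
\[
f_i^N(x,\tilde\omega)=\sum_{k=1}^{K_{i,N}}\alpha^{N}_{i,k}(x)\,\beta^{N}_{i,k}(\tilde\omega),\qquad \alpha^{N}_{i,k}\in C(\overline{\bQ}),\ \beta^{N}_{i,k}\in\Psi,
\]
truncated so that $\norm{f_i^N}_{L^\infty}\le 2\norm{f_i}_{L^\infty}$. Invoking Lemma \ref{lem:f-i-g-i-ergodic} on the enlarged (but still countable) family $\{f_i\}\cup\{f_i^N\}\cup\{f_i-f_i^N\}$ provides a single full-measure set of $\omega$'s on which all the ergodic limits below hold simultaneously. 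For each fixed $N$ the already-proven two-scale convergence $u^{\eps'}\stackrel{2s}{\weakto}u$, applied separately to each summand $\varphi(x)\alpha^{N}_{i,k}(x)\cdot\psi(\tau_{x/\eps'}\omega)\beta^{N}_{i,k}(\tau_{x/\eps'}\omega)$, delivers \eqref{eq:ts-conv-conclusion} with $f_i^N$ in place of $f_i$. The residual error is bounded via Cauchy--Schwarz by
\[
\norm{u^{\eps'}}_{L^2(\bQ)}\!\left(\int_{\bQ}\varphi^2(x)\,\psi^2(\tau_{x/\eps'}\omega)\,(f_i-f_i^N)^2(x,\tau_{x/\eps'}\omega)\,\d x\right)^{\!1/2},
\]
and by Lemma \ref{lem:f-i-g-i-ergodic} the integral inside the parentheses converges to $\norm{\varphi\,\psi\,(f_i-f_i^N)}^2_{L^2(\bQ\times\Omega)}$, which is arbitrarily small for $N$ large. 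Passing to the limit first in $\eps'\to0$ and then in $N\to\infty$ closes the argument.

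The main obstacle I anticipate is coordinating this last approximation step: Lemma \ref{lem:f-i-g-i-ergodic} must be applied simultaneously (on a single full-measure set of $\omega$'s) to the $f_i$, to all their approximants $f_i^N$, and to the residuals $f_i-f_i^N$. Fixing these sequences \emph{a priori} as one countable family before invoking the lemma, combined with the $L^\infty$-truncation ensuring $f_i-f_i^N\in L^\infty$, handles this book-keeping. The remainder is standard two-scale compactness machinery: diagonal extraction, Riesz representation, and a controlled double limit in $(\eps',N)$.
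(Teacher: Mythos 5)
Your argument is correct, and for the compactness half it reproduces the standard proof of the cited result from \cite{Hei17} (Lemma 4.4-1): asymptotic Cauchy--Schwarz via \eqref{dstar1} with $f_i\equiv 1$, diagonal extraction over the countable pairs $(\varphi_j,\psi_k)$, and Riesz representation, which is exactly what the paper outsources to the reference. Where you genuinely diverge is in the $f_i$-weighted convergence \eqref{eq:ts-conv-conclusion}: the paper does not approximate $f_i$ at all, but observes that the span of the functions $\varphi_j\psi_k f_i$ is itself dense in $L^2(\bQ\times\Omega)$ (adding $1$ to the family $(f_i)$) and that \eqref{dstar1} --- deliberately stated with the square $\varphi^2\psi^2f_i^2$ --- makes each $\varphi_j\psi_k f_i$ an admissible oscillating test function directly, so the abstract compactness lemma applies to the enlarged family in one stroke. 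Your route, approximating $f_i$ by tensor sums and controlling the residual by Cauchy--Schwarz plus the ergodic limit for $(f_i-f_i^N)^2$, is equally valid but needs two small repairs: (i) the $L^\infty$-truncation of $f_i^N$ should be dropped, since truncating destroys the tensor structure you need in order to test termwise, and it is unnecessary --- each $f_i-f_i^N$ is automatically bounded, and only its $L^2(\bQ\times\Omega)$-smallness enters the error estimate; (ii) testing $u^{\eps'}$ against $\varphi\alpha^N_{i,k}\cdot\psi\beta^N_{i,k}$ requires $\psi\beta^N_{i,k}$ to be an admissible test function, which Definition \ref{def:two-scale-conv} does not grant because $\Psi$ need not be closed under products; you must invoke Remark \ref{lem:Extension-Phi-q-stoch-holes}, or equivalently include the countable family of products $\psi_k\psi_{k'}$ among the $f_i$ before fixing the full-measure set $\Omega_{\Psi}$. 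With these adjustments your double limit in $(\eps',N)$ closes correctly; the paper's direct enlargement of the test family simply avoids both issues, at the cost (shared by your approach) of having to pre-commit the whole countable family before choosing $\omega$.
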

\begin{proof}
Let $\left(\varphi_j\right)_{j\in\N}$  be a countable dense subset of 
$C(\overline\bQ)$ and write $\Psi=\left(\psi_k\right)_{k\in\N}$. 
Then the span of $\varphi_j \psi_k f_i$ is dense in $L^2(\bQ\times\Omega)$ 
(assuming w.l.o.g. that $1\in (f_i)_{i\in\N}$). 
Thus \eqref{eq:ts-conv-conclusion} follows 
from \cite{Hei17}, Lemma  4.4-1, using \eqref{dstar1}, for all 
$\varphi_j \psi_k f_i$. The statement follows eventually from a density 
argument to 
conclude for general $\varphi\in C(\overline \bQ)$.
\end{proof}

\begin{remark}
\label{lem:Extension-Phi-q-stoch-holes}As already observed in \cite{Hei17},
Lemma \ref{lem:Existence-ts-lim} implies that for every 
$f \in L^{\infty}(\Omega)$,
the class of test-functions $\Psi$ can be enriched by a countable
subset $f \Psi\subset L^{2}(\Omega)$ changing $\Omega_{\Psi}$
only by a set of measure $0$.
\end{remark}

We note that the definition of two-scale convergence in \cite{Hei17} 
is formulated in a different way. However, due to Lemma 4.6 of 
\cite{Hei17}, we can recover our
Definition \ref{def:two-scale-conv}.
In particular, the original version of Lemma \ref{lem:Existence-ts-lim}
yields two-scale convergence in the sense of \cite{Hei17}
[Definition 4.2],
and by Lemma 4.6 of \cite{Hei17}  one infers 
Lemma  \ref{lem:Existence-ts-lim}. 
Finally, if $\Omega$ is compact, we recover
the statements of \cite{ZP} by separability of $C_b(\Omega)=C(\Omega)$.

\begin{lemma}
\label{lem:sto-conver-grad}There exists $\tilde{\Omega}\subset\Omega_{\Psi}$
of full measure such that for all $\omega\in\tilde{\Omega}$ the following
holds: If $u^{\eps}\in H^{1}(\bQ;\Rm)$ for all $\eps$, with 
$\|\nabla u^{\eps}\|_{L^{2}(\bQ)}<C$
for $C$ independent from $\eps>0$, then there exists a subsequence
denoted by $\ue$, functions $u\in H^{1}(\bQ;\Rm)$ and 
$v\in L^{2}(\bQ;L_{pot}^{2}(\Omega))$
such that $u^{\eps} \weakto u$ weakly in $H^1(\bQ)$ and 
\[
\nabla u^{\eps}\stackrel{2s}{\weakto}\nabla u+v\qquad\mbox{as }\eps\to0\,.
\]

\end{lemma}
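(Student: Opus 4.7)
My plan follows the classical stochastic two-scale gradient decomposition: extract a two-scale limit $\chi$ of $\nabla u^\eps$, show $\int_\Omega \chi\, d\P = \nabla u$, and identify the oscillating part as potential in $\omega$ by testing against stationary solenoidal fields. First, Lemma \ref{lem:Existence-ts-lim} applied componentwise (with $\Psi$ enriched by bounded $\Rm$-valued functions as in Remark \ref{lem:Extension-Phi-q-stoch-holes}) extracts from the uniformly $L^2$-bounded sequence $\nabla u^\eps$ a subsequence with $\nabla u^\eps \stackrel{2s}{\weakto} \chi \in L^2(\bQ; L^2(\Omega; \Rm))$. Assuming also a uniform $L^2(\bQ)$ bound on $u^\eps$ (either part of the hypothesis, or recovered by Poincaré after subtracting means), Rellich--Kondrachov provides a further subsequence with $u^\eps \weakto u$ weakly in $H^1(\bQ)$ and strongly in $L^2(\bQ)$. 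Taking $\psi \equiv 1 \in \Psi$ paired with $\varphi \in C_c^\infty(\bQ; \Rm)$ in Definition \ref{def:two-scale-conv} then yields
\[ \int_\bQ \int_\Omega \chi(x, \tilde\omega) \cdot \varphi(x) \, d\P(\tilde\omega) \, dx = \lim_\eps \int_\bQ \nabla u^\eps \cdot \varphi \, dx = \int_\bQ \nabla u \cdot \varphi \, dx, \]
so $\int_\Omega \chi(\cdot, \tilde\omega) d\P(\tilde\omega) = \nabla u$ in $L^2(\bQ; \Rm)$. Setting $v := \chi - \nabla u$ reduces matters to showing $v(x, \cdot) \in L^2_{\pot}(\Omega)$ for almost every $x \in \bQ$.

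Second, I invoke the orthogonal decomposition $L^2(\Omega; \Rm) = L^2_{\sol}(\Omega) \oplus \cV^2_{\pot}(\Omega)$ recalled before \eqref{eq:def-smoothing-omega} and establish $v(x, \cdot) \perp L^2_{\sol}(\Omega)$ for a.e. $x$. Pick a countable dense family $(\zeta_j) \subset L^2_{\sol}(\Omega) \cap L^\infty(\Omega; \Rm)$ (density follows from the smoothing operator $\sI_\delta$ from \eqref{eq:def-smoothing-omega}, which preserves solenoidality while producing bounded smooth approximants, followed by truncation), enrich $\Psi$ by the components of each $\zeta_j$ via Remark \ref{lem:Extension-Phi-q-stoch-holes}, and intersect the corresponding full-measure sets to obtain $\tilde\Omega \subset \Omega_\Psi$. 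For $\omega \in \tilde\Omega$, $\varphi \in C_c^1(\bQ)$, and any $\zeta_j$, the realization $x \mapsto \zeta_j(\tau_{x/\eps}\omega)$ is weakly divergence-free on $\Rm$ (by the very definition of $L^2_{\sol}(\Omega)$, preserved under the scaling $x \mapsto x/\eps$). Since $u^\eps \varphi \in H^1_0(\bQ) \subset H^1(\Rm)$, the divergence-free condition yields
\[ \int_\bQ \nabla u^\eps \cdot \varphi \, \zeta_j(\tau_{x/\eps}\omega) \, dx = -\int_\bQ u^\eps \, \nabla\varphi \cdot \zeta_j(\tau_{x/\eps}\omega) \, dx. \]
As $\eps \to 0$, the LHS tends to $\int_\bQ \int_\Omega \chi \cdot \varphi \zeta_j \, d\P\, dx$ by two-scale convergence; for the RHS, the weak $L^2(\bQ)$-convergence $\zeta_j(\tau_{\cdot/\eps}\omega) \weakto \langle\zeta_j\rangle := \int_\Omega \zeta_j \, d\P$ from Lemma \ref{lem:Ex-erg-repres-general}, combined with strong $L^2(\bQ)$ convergence of $u^\eps$, produces the limit $-\int_\bQ u \, \nabla\varphi \cdot \langle\zeta_j\rangle \, dx = \int_\bQ \nabla u \cdot \varphi \langle\zeta_j\rangle \, dx$ (integrating back by parts, with the boundary term vanishing because $\varphi \in C_c^1(\bQ)$). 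Subtracting the two limits gives $\int_\bQ \int_\Omega v \cdot \varphi \zeta_j \, d\P\, dx = 0$, and density of the tensor products $\varphi \otimes \zeta_j$ in $L^2(\bQ; L^2_{\sol}(\Omega))$ closes the argument, placing $v(x, \cdot) \in \cV^2_{\pot}(\Omega) \subset L^2_{\pot}(\Omega)$ for a.e. $x$.

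The main obstacle is the bookkeeping: arranging all the ergodic limits and integration-by-parts identities to hold on a single full-measure set $\tilde\Omega$ that is independent of the uncountably many test pairs $(\varphi, \zeta)$, and verifying that $L^\infty(\Omega; \Rm) \cap L^2_{\sol}(\Omega)$ is both dense in $L^2_{\sol}(\Omega)$ and admissible as an enrichment of $\Psi$. The separability framework of Remark \ref{rem:es-count-dense-set}, the enrichment mechanism of Remark \ref{lem:Extension-Phi-q-stoch-holes}, and the smoothing operator $\sI_\delta$ of \eqref{eq:def-smoothing-omega} are tailored to handle precisely this.
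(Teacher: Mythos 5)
The paper never proves this lemma itself --- it quotes it from \cite{Hei17} (with the remark that the original $H^1_0(\bQ)$ proof extends to $H^1(\bQ)$) --- so you are supplying an argument the paper omits. Your architecture is the standard one for this result: extract a two-scale limit $\chi$ of $\nabla u^{\eps}$, identify $\int_\Omega \chi\,d\P$ with $\nabla u$, and eliminate the solenoidal component of $v=\chi-\nabla u$ by testing against stationary divergence-free fields and invoking $L^{2}(\Omega;\Rm)=L^{2}_{\sol}(\Omega)\oplus\cV^{2}_{\pot}(\Omega)$. Most steps are sound, including your (correct) observation that the statement as written implicitly needs a uniform $L^{2}(\bQ)$ bound on $u^{\eps}$ itself, which in \cite{Hei17} comes for free from Poincar\'e on $H^{1}_{0}$.

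The genuine gap is the density claim underpinning your second step: you need a countable family $(\zeta_j)$ of \emph{bounded} (hence two-scale admissible) fields whose span is dense in $L^{2}_{\sol}(\Omega)$, and the proposed justification ``apply $\sI_\delta$, then truncate'' fails on both counts. First, $\sI_\delta$ does not produce bounded functions from a general $f\in L^{2}(\Omega)$: $|\sI_\delta f(\omega)|$ is controlled only by a local $L^{2}$ norm of the realization $f_\omega$ near the origin, which is finite for a.e.\ $\omega$ but not uniformly in $\omega$. Second, and more seriously, componentwise truncation destroys the weak divergence-free property of the realizations, so the truncated field leaves $L^{2}_{\sol}(\Omega)$ and you lose exactly the structure that your integration by parts exploits. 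Without density of $L^{\infty}(\Omega;\Rm)\cap L^{2}_{\sol}(\Omega)$ in $L^{2}_{\sol}(\Omega)$, the conclusion $v(x,\cdot)\in\bigl(\overline{\mathrm{span}}\{\zeta_j\}\bigr)^{\bot}$ does not yield $v(x,\cdot)\in\bigl(L^{2}_{\sol}(\Omega)\bigr)^{\bot}=\cV^{2}_{\pot}(\Omega)$. The standard repair is to decouple the two roles of the test field: keep the exact, possibly unbounded $\zeta\in L^{2}_{\sol}(\Omega)$ both in the integration-by-parts identity (which only needs $\zeta_\omega\in L^{2}_{\sol,\loc}(\Rm)$) and on the right-hand side (which only needs weak $L^{2}(\bQ)$ convergence of the realization, available from Lemma \ref{lem:ergodicity-cont-functions} for any $\zeta\in L^{2}(\Omega)$); truncate only when passing to the two-scale limit on the left-hand side, estimating the truncation error uniformly in $\eps$ by $\|\nabla u^{\eps}\|_{L^{2}(\bQ)}\,\|\varphi\|_\infty\,\|(\zeta-\zeta^{M})(\tau_{\cdot/\eps}\omega)\|_{L^{2}(\bQ)}$, whose $\limsup$ is of order $\|\zeta-\zeta^{M}\|_{L^{2}(\Omega)}$ by the ergodic theorem applied to $|\zeta-\zeta^{M}|^{2}\in L^{1}(\Omega)$. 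With that modification, and the usual intersection over countably many $\zeta$'s, $\varphi$'s and truncation levels to fix a single $\tilde\Omega$, your proof closes.
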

The original version of the above Lemma in \cite{Hei17} was formulated in 
$H^1_0(\bQ)$. However, the proof applies for all sequences in $H^1(\bQ)$. 

We are also interested in the convergence behavior of functions 
$\ue:\,[0,T]\to L^{2}(\bQ)$.
In particular, we provide the following definition:

\begin{definition}
\label{def:weak-A-conv-time}Let $\Psi$ be the set of 
Remark \ref{rem:es-count-dense-set},
$\Lambda=(\varphi_{i})_{i\in\N}$ be a countable dense subset
of $C(\overline{\bQ})$, $\omega\in\Omega_{\Psi}$ and
$\ue\in L^{2}(0,T;L^{2}(\bQ))$
for all $\eps>0$. We say that $(\ue)$ converges (weakly) in two
scales to $u\in L^{2}(0,T;L^{2}(\bQ;L^{2}(\Omega,\P)))$, and
write $\ue\stackrel{2s}{\weakto}u$, if for all continuous and piece-wise
affine functions $\phi:\,[0,T]\to\mathrm{span}\Psi\times\Lambda$
there holds, with 
$\phi_{\omega,\eps}(t,x):=\phi(t,x,\tau_{\frac{x}{\eps}}\omega)$,
\[
\lim_{\eps\to0}\int_{0}^{T}\int_{\bQ}\ue\phi_{\omega,\eps}
dx\, dt=\int_{0}^{T}\int_{\bQ}\int_{\Omega}u(t,x,\tilde \omega) \, 
\phi(t,x,\tilde \omega)\,d\P(\tilde \omega)\,
dx\,dt\,.
\]

\end{definition}
Note that the test functions now have values in the vector space 
$\mathrm{span}\Psi$
since they are affine. Similar to the stationary case, we obtain the
following lemma.

\begin{lemma}
\label{lem:weak-fA-conv-time} (\cite{Hei17}, $Lemma \, 4.16$)  
Let $T>0$. Then, every
sequence $\left(\ue\right)_{\eps>0}$ with 
$\ue\in L^{2}(0,T;L^{2}(\bQ))$
satisfying $\norm{\ue}_{L^{2}(0,T;L^{2}(\bQ))}\leq C$
for some $C>0$ independent from $\eps$ has a weakly two-scale convergent
subsequence with limit function 
$u\in L^{2}(0,T;L^{2}(\bQ;L^{2}(\Omega,\P)))$.
Furthermore, if $\norm{\partial_{t}\ue}_{L^{2}(0,T;L^{2}(\bQ))}
\leq C$
uniformly for $1<p\leq\infty$, then also 
$\partial_{t}u\in L^{2}(0,T;L^{2}(\bQ;L^{2}(\Omega,\P)))$
and $\partial_{t}\ue\stackrel{2s}{\weakto}\partial_{t}u$ in the sense
of Definition \ref{def:weak-A-conv-time} as well as 
$\ue(t)\stackrel{2s}{\weakto}u(t)$
for all $t\in[0,T]$.
\end{lemma}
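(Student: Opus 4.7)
The plan is to reduce the compactness assertion to the stationary two-scale compactness of Lemma \ref{lem:Existence-ts-lim} via a Cantor diagonal extraction over a countable dense family of time-dependent test functions, to identify the two-scale limit of $\partial_t\ue$ by an integration by parts in $t$, and to upgrade to pointwise-in-$t$ convergence using the Hölder-$\tfrac12$ equicontinuity implied by the bound on $\partial_t\ue$.

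First, for the existence of a weakly two-scale convergent subsequence, let $(\alpha_{n})_{n\in\N}$ be a countable dense family of continuous piecewise affine functions $[0,T]\to\R$, and recall that $\Lambda=(\varphi_{j})_{j\in\N}$ and $\Psi=(\psi_{k})_{k\in\N}$ are countable dense in $C(\overline{\bQ})$ and $C_{b}(\Omega)$ respectively. For each triple $(n,j,k)$ the scalar net
\[
\eps\mapsto\int_{0}^{T}\int_{\bQ}\ue(t,x)\,\alpha_{n}(t)\,\varphi_{j}(x)\,\psi_{k}(\tau_{\frac{x}{\eps}}\omega)\,\d x\,\d t
\]
is bounded uniformly in $\eps$ by $C\,\|\alpha_{n}\|_{\infty}\|\varphi_{j}\|_{\infty}\|\psi_{k}\|_{\infty}$, so a Cantor diagonal argument yields a single subsequence along which all these integrals converge. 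By linearity and density, the limiting values extend to a bounded linear functional on $L^{2}(0,T;L^{2}(\bQ;L^{2}(\Omega,\P)))$, of norm at most $\liminf_{\eps}\|\ue\|_{L^{2}(0,T;L^{2}(\bQ))}$; Riesz representation produces the desired $u\in L^{2}(0,T;L^{2}(\bQ;L^{2}(\Omega,\P)))$ with $\ue\stackrel{2s}{\weakto}u$.

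For the derivative part, applying this compactness to the bounded sequence $(\partial_{t}\ue)$ gives, along a further subsequence, a weak two-scale limit $w\in L^{2}(0,T;L^{2}(\bQ;L^{2}(\Omega,\P)))$. To identify $w=\partial_{t}u$, I would test against admissible $\phi$ vanishing at $t=0$ and $t=T$ and integrate by parts on the $\eps$-level:
\[
\int_{0}^{T}\int_{\bQ}\partial_{t}\ue\,\phi_{\omega,\eps}\,\d x\,\d t=-\int_{0}^{T}\int_{\bQ}\ue\,\partial_{t}\phi_{\omega,\eps}\,\d x\,\d t\,.
\]
The key observation is that $\partial_{t}\phi_{\omega,\eps}(t,x)=(\partial_{t}\phi)(t,x,\tau_{\frac{x}{\eps}}\omega)$ is again an admissible test function, so passing to the two-scale limit on both sides yields the weak-derivative relation $w=\partial_{t}u$ on $(0,T)\times\bQ\times\Omega$, and in particular $\partial_{t}u\in L^{2}(0,T;L^{2}(\bQ;L^{2}(\Omega,\P)))$.

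The pointwise statement $\ue(t)\stackrel{2s}{\weakto}u(t)$ for every $t\in[0,T]$ is the main obstacle. The derivative bound yields the uniform estimate $\|\ue(t)-\ue(s)\|_{L^{2}(\bQ)}\leq C|t-s|^{1/2}$, and since $u\in H^{1}(0,T;L^{2}(\bQ;L^{2}(\Omega,\P)))\hookrightarrow C([0,T];L^{2}(\bQ;L^{2}(\Omega,\P)))$, one has $\|u(t)-u(s)\|_{L^{2}(\bQ;L^{2}(\Omega))}\leq C|t-s|^{1/2}$ as well. For fixed $t$, Lemma \ref{lem:Existence-ts-lim} extracts a stationary two-scale limit $u^{\star}(t)$ of $\ue(t)$ along some subsequence; a three-$\delta$ argument, combining the stationary two-scale compactness at a countable dense set of times $(t_{m})\subset[0,T]$, the $\eps$-uniform equicontinuity of $\ue(\cdot)$, and the continuity of $u(\cdot)$, forces $u^{\star}(t)=u(t)$, and uniqueness of the limit eliminates the need for a subsequence. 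I expect the technical care to lie precisely here: choosing a single null set $\tilde\Omega\subset\Omega_{\Psi}$ on which the stationary two-scale compactness holds simultaneously at all $t_{m}$, and correctly chaining the equicontinuity in $t$ with the two-scale tests in $(x,\omega)$.
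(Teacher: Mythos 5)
The paper does not actually prove this lemma; it is quoted from \cite{Hei17} (Lemma 4.16), so there is no in-paper argument to compare against. Your outline follows the same strategy as the cited source: diagonal extraction over a countable family of elementary test functions, identification of $\partial_{t}u$ by integration by parts in time, and pointwise-in-$t$ convergence via the $C^{0,1/2}([0,T];L^{2}(\bQ))$ equicontinuity coming from the bound on $\partial_{t}\ue$. Steps two and three are sound modulo routine remarks (e.g.\ $\partial_{t}\phi$ for continuous piecewise affine $\phi$ is only piecewise constant in $t$, so the tested identity must be extended by density using the already-established $L^{2}$ bound on the limit; and the time-slice limits at a countable dense set of times must all be taken on one common full-measure set of $\omega$, which you do flag).

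The genuine gap is in the first step. The bound $C\,\|\alpha_{n}\|_{\infty}\|\varphi_{j}\|_{\infty}\|\psi_{k}\|_{\infty}$ suffices for the Cantor diagonal extraction, but it is not the estimate that lets you extend the limit values to a bounded linear functional on $L^{2}(0,T;L^{2}(\bQ;L^{2}(\Omega,\P)))$ and apply Riesz there: a sup-norm bound would at best yield a measure on $[0,T]\times\overline{\bQ}\times\Omega$, not an $L^{2}$ representative, and certainly not the norm estimate $\|u\|\le\liminf_{\eps}\|\ue\|$. What is needed is the Cauchy--Schwarz bound
\[
\Bigl|\int_{0}^{T}\!\!\int_{\bQ}\ue\,\phi_{\omega,\eps}\,\d x\,\d t\Bigr|\le \norm{\ue}_{L^{2}(0,T;L^{2}(\bQ))}\Bigl(\int_{0}^{T}\!\!\int_{\bQ}\bigl|\phi(t,x,\tau_{\frac{x}{\eps}}\omega)\bigr|^{2}\,\d x\,\d t\Bigr)^{1/2}
\]
combined with the fact that, for the chosen $\omega\in\Omega_{\Psi}$, the last factor converges to $\|\phi\|_{L^{2}((0,T)\times\bQ\times\Omega)}$. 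That convergence is exactly the ergodic-theorem statement \eqref{dstar1} of Lemma \ref{lem:f-i-g-i-ergodic} (it is the reason the squares appear there), and it is the one place where the restriction to the full-measure set $\Omega_{\Psi}$ actually enters the compactness argument. Without invoking it, the functional is not known to be bounded in the $L^{2}(\bQ\times\Omega)$ topology of the test functions, and the Riesz representation step does not go through. Once this is inserted, your argument is complete and coincides with the one in \cite{Hei17}.
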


As a special case of the last result, we have
\begin{lemma}
\label{lem:weak-fA-conv-time-dt-ueps} (\cite{Hei17}, $Lemma \, 4.17$)  
Let $\Psi$ and
$\Omega_{\Psi}$ be given by Remark \ref{rem:es-count-dense-set} and
$\omega\in\Omega_{\Psi}$. Let 
$\ue\in C^{Lip}(0,T;L^{2}(\bQ))$
for all $\eps>0$ such that 
$\norm{\ue}_{C^{Lip}(0,T;L^{2}(\bQ))}\leq C$
for some $C$ independent from $\eps>0$. Then, there exists 
$u\in C^{Lip}(0,T;L^{2}(\bQ;L^{2}(\Omega,\P)))$
and a subsequence $u^{\eps'}$ of $\ue$ such that 
$u^{\eps'}(t)\stackrel{2s}{\weakto}u(t)$
for all $t\in[0,T]$. 
\end{lemma}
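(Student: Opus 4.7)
The plan is to reduce this statement to Lemma \ref{lem:weak-fA-conv-time} applied to both $\ue$ and its time derivative, and then to upgrade the regularity of the resulting two-scale limit from Sobolev to Lipschitz via a lower-semicontinuity argument.

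First, I would observe that the uniform bound $\norm{\ue}_{C^{Lip}(0,T;L^{2}(\bQ))}\leq C$ implies that $\ue$ is absolutely continuous from $[0,T]$ to $L^{2}(\bQ)$, hence weakly differentiable in $t$ with $\norm{\partial_{t}\ue}_{L^{\infty}(0,T;L^{2}(\bQ))}\leq C$. In particular both $\ue$ and $\partial_{t}\ue$ are uniformly bounded in $L^{2}(0,T;L^{2}(\bQ))$, so the hypotheses of Lemma \ref{lem:weak-fA-conv-time} are met.

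Second, I would apply Lemma \ref{lem:weak-fA-conv-time} to extract a subsequence $u^{\eps'}$ and limit functions $u,\partial_{t}u\in L^{2}(0,T;L^{2}(\bQ;L^{2}(\Omega,\P)))$ such that $u^{\eps'}\stackrel{2s}{\weakto}u$, $\partial_{t}u^{\eps'}\stackrel{2s}{\weakto}\partial_{t}u$, and crucially $u^{\eps'}(t)\stackrel{2s}{\weakto}u(t)$ for every $t\in[0,T]$. This already delivers the pointwise-in-$t$ two-scale convergence claimed in the statement.

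Third, I would promote $u$ from $W^{1,2}(0,T;L^{2}(\bQ;L^{2}(\Omega,\P)))$ to $C^{Lip}(0,T;L^{2}(\bQ;L^{2}(\Omega,\P)))$. Fix $t_{1},t_{2}\in[0,T]$. By linearity of weak two-scale convergence, $u^{\eps'}(t_{1})-u^{\eps'}(t_{2})\stackrel{2s}{\weakto}u(t_{1})-u(t_{2})$, and the lower-semicontinuity estimate \eqref{eq:two-scale-limit-estimate} from Lemma \ref{lem:Existence-ts-lim} yields
\[
\norm{u(t_{1})-u(t_{2})}_{L^{2}(\bQ;L^{2}(\Omega,\P))}\leq\liminf_{\eps'\to0}\norm{u^{\eps'}(t_{1})-u^{\eps'}(t_{2})}_{L^{2}(\bQ)}\leq C\,|t_{1}-t_{2}|\,,
\]
where the last inequality is the uniform Lipschitz bound on $\ue$. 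This gives the required Lipschitz continuity of $u$.

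The argument is essentially routine once Lemma \ref{lem:weak-fA-conv-time} is in hand; the only mildly delicate point is the lower semicontinuity step, which requires knowing that weak two-scale convergence is closed under linear combinations and that the $L^{2}(\bQ;L^{2}(\Omega,\P))$-norm is lower semicontinuous along such convergence---both of which are built into \eqref{eq:two-scale-limit-estimate}. I do not anticipate any genuine obstacle beyond this bookkeeping.
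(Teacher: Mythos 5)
Your argument is correct, but note that the paper itself offers no proof of this lemma: it is imported verbatim from \cite{Hei17} (Lemma 4.17), so there is nothing internal to compare against. What you have written is a legitimate self-contained reconstruction: the Lipschitz bound in $C^{Lip}(0,T;L^{2}(\bQ))$ does give $\partial_t\ue\in L^{\infty}(0,T;L^{2}(\bQ))$ with norm at most $C$ (using that $L^{2}(\bQ)$ is a separable Hilbert space, hence has the Radon--Nikodym property), so Lemma \ref{lem:weak-fA-conv-time} applies and already yields the pointwise-in-$t$ convergence $u^{\eps'}(t)\stackrel{2s}{\weakto}u(t)$; the only genuinely new content of the present lemma is the $C^{Lip}$ regularity of the limit, which your lower-semicontinuity argument supplies. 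The one step worth spelling out is that \eqref{eq:two-scale-limit-estimate} is stated in Lemma \ref{lem:Existence-ts-lim} only for the subsequence extracted there, whereas you apply it to the already-convergent sequence $u^{\eps'}(t_{1})-u^{\eps'}(t_{2})$; this is repaired by passing to a sub-subsequence realizing the $\liminf$, extracting once more via Lemma \ref{lem:Existence-ts-lim}, and invoking uniqueness of weak two-scale limits (which holds because the span of the test functions $\varphi\psi$ is dense in $L^{2}(\bQ\times\Omega)$). With that bookkeeping made explicit, the estimate
\[
\norm{u(t_{1})-u(t_{2})}_{L^{2}(\bQ;L^{2}(\Omega,\P))}\leq\liminf_{\eps'\to0}\norm{u^{\eps'}(t_{1})-u^{\eps'}(t_{2})}_{L^{2}(\bQ)}\leq C\,|t_{1}-t_{2}|
\]
together with the analogous uniform bound on $\norm{u(t)}$ gives $u\in C^{Lip}(0,T;L^{2}(\bQ;L^{2}(\Omega,\P)))$, and the proof is complete.
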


\subsection{Domains with holes.}\label{sec:holes}
Since $G(\omega)$ is a random set, there exists, by the considerations
in Section \ref{sec:rand-meas-an-sets}, a set $G\subset\Omega$ such
that $\chi_{G(\omega)}(x)=\chi_{G}(\tau_{x}\omega)$. 
Based on $G$, respectively its complement $G^\complement$,
we obtain the following generalized concept of two-scale convergence.

\begin{lemma}
\label{lem:sto-ts-conv-on-holes} Let $\ue\in L^{2}(\bQ)$ be a sequence
of functions 
such that $\sup_{\eps>0}\norm{\ue}_{L^{2}(\bQ)}<\infty$. 
If $(u^{\eps'})_{\eps'\to0}$ is a subsequence such that 
$u^{\eps'}\stackrel{2s}{\weakto}u$
for some $u\in L^{2}(\bQ;L^{2}(\Omega))$,
then $\ue \, \chi_{\bQ^{\eps}}\stackrel{2s}{\weakto}u \, \chi_{G^\complement}$.
\end{lemma}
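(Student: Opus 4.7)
The strategy is to split $\chi_{\bQ^\eps(\omega)}$ into a stationary indicator plus a negligible boundary-layer error, and then reduce the main piece to two-scale convergence of $\ue$ against a discontinuous but bounded test function supplied by the enrichment device of Remark \ref{lem:Extension-Phi-q-stoch-holes}.

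From Section \ref{sec:rand-meas-an-sets} we have $\chi_{G^\complement(\omega)}(x)=\chi_{G^\complement}(\tau_x\omega)$, so $\chi_{\bQ\cap\eps G^\complement(\omega)}(x) = \chi_{G^\complement}(\tau_{x/\eps}\omega)\chi_{\bQ}(x)$. The definition \eqref{eq:def-Q-eps-Gamma-eps} of $\bQ^\eps(\omega)$ together with Assumption \ref{1} discards from $I_\eps(\omega)$ only those holes within distance $\eps d_0$ of $\partial\bQ$; hence the symmetric difference $\bQ^\eps(\omega)\,\triangle\,(\bQ\cap\eps G^\complement(\omega))$ is contained in the Lipschitz strip $S_\eps:=\{x\in\bQ:\mathrm{dist}(x,\partial\bQ)\leq\eps d_0\}$ of Lebesgue measure $O(\eps)$. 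Fixing $\psi\in\Psi$, $\varphi\in C(\overline{\bQ})$ and $\phi_{\omega,\eps}(x)=\varphi(x)\psi(\tau_{x/\eps}\omega)$, an application of Cauchy--Schwarz combined with $\sup_\eps\|\ue\|_{L^2(\bQ)}<\infty$ gives
\[
\left|\int_{\bQ}\ue\,\phi_{\omega,\eps}\bigl[\chi_{\bQ^\eps(\omega)}-\chi_{G^\complement}(\tau_{\cdot/\eps}\omega)\chi_\bQ\bigr]\d x\right|\leq C\,\|\varphi\|_\infty\|\psi\|_\infty\sqrt{|S_\eps|}=O(\sqrt{\eps}),
\]
so this error vanishes as $\eps\to0$.

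For the main term I rewrite $\int_{\bQ}\ue(x)\varphi(x)\chi_{G^\complement}(\tau_{x/\eps}\omega)\psi(\tau_{x/\eps}\omega)\d x$ as a test of $\ue$ against $\varphi(x)\widetilde\psi(\tau_{x/\eps}\omega)$ with $\widetilde\psi:=\chi_{G^\complement}\psi\in L^\infty(\Omega)$. By Remark \ref{lem:Extension-Phi-q-stoch-holes}, the countable test family $\Psi$ may be enriched by $\chi_{G^\complement}\Psi$ while altering $\Omega_\Psi$ only on a set of measure zero, so the hypothesis $u^{\eps'}\stackrel{2s}{\weakto}u$ (equivalently, Lemma \ref{lem:Existence-ts-lim} applied with the admissible choice $f_i\equiv\chi_{G^\complement}$) yields
\[
\int_{\bQ}\ue\,\varphi(x)\chi_{G^\complement}(\tau_{x/\eps}\omega)\psi(\tau_{x/\eps}\omega)\d x\;\longrightarrow\;\int_{\bQ}\int_\Omega u(x,\tilde\omega)\varphi(x)\psi(\tilde\omega)\chi_{G^\complement}(\tilde\omega)\d\P(\tilde\omega)\d x.
\]
Adding the negligible boundary-layer contribution from the previous paragraph establishes $\ue\chi_{\bQ^\eps}\stackrel{2s}{\weakto}u\chi_{G^\complement}$.

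The only genuinely non-formal point is the legitimacy of testing against the discontinuous function $\widetilde\psi=\chi_{G^\complement}\psi$, which is precisely what the enrichment in Remark \ref{lem:Extension-Phi-q-stoch-holes} is designed to handle; without it one would have to approximate $\chi_{G^\complement}$ by bounded continuous functions in $L^2(\Omega,\P)$ and extract a diagonal subsequence, which is feasible but technical. The boundary-strip estimate is not a real obstacle because Assumption \ref{1} was imposed precisely to keep the removed holes at distance of order $\eps d_0$ from $\partial\bQ$, ensuring the $O(\sqrt{\eps})$ bound above.
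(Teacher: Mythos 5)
Your proof is correct and follows essentially the same route as the paper: the main term is handled by enriching the test class with $\chi_{G^\complement}\Psi$ via Remark \ref{lem:Extension-Phi-q-stoch-holes}, and the discrepancy between $\chi_{\bQ^\eps(\omega)}$ and $\chi_{G^\complement}(\tau_{x/\eps}\omega)\chi_{\bQ}$ is confined to a boundary layer near $\partial\bQ$ and killed by Cauchy--Schwarz (the paper fixes a $\delta$-neighborhood and sends $\eps\to0$ first, while you bound the strip measure by $O(\eps)$ directly, which is marginally cleaner; note only that the strip should have width $2\eps d_0$ rather than $\eps d_0$, since a discarded hole of diameter $\eps d_0$ whose boundary comes within $\eps d_0$ of $\partial\bQ$ can extend that far again inward).
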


\begin{proof}
Let $(u^{\eps'})_{\eps'\to0}$ be a subsequence such that 
$u^{\eps'}\stackrel{2s}{\weakto}u$.
Then the definition of two-scale convergence in $L^{2}(\bQ)$ together
with Remark \ref{lem:Extension-Phi-q-stoch-holes} implies that, for
every $\varphi\in C(\overline{\bQ})$ and $\psi\in\Psi$, it holds
\[
\lim_{\eps\to0} \int_{\bQ}\ue(x) \chi_{G^\complement}(\tau_{\frac{x}{\eps}}
\omega) \varphi(x) \,\psi(\tau_{\frac{x}{\eps}}\omega)\d x=
\int_{\bQ}\int_{\Omega}u(x,\tilde \omega) \chi_{G^\complement}(\tilde \omega)
\varphi(x) \psi(\tilde \omega) \d\P(\tilde \omega)\d x\,.
\]
Furthermore, for $\delta>0$, let us consider the ball $B_{\delta}(x)$
of radius $\delta$ and center $x$.
For $\eps>0$ small enough and
with $\phi_{\omega,\eps}(x):=\varphi(x)\psi(\tau_{\frac{x}{\eps}}\omega)$
it holds that 
\begin{align*}
&\left|\int_{\bQ}\ue(x) \,\left(\chi_{G^\complement}
(\tau_{\frac{x}{\eps}}\omega)-
\chi_{\bQ^{\eps}}(x) \right) \, \phi_{\omega,\eps}(x)\d x\right|  \\
&\leq\left(\int_{B_{\delta}(\partial\bQ)}\left|\ue\right|^{2} \, dx\right)
\left(\int_{B_{\delta}(\partial\bQ)}\varphi^{2}(x)\,\psi^{2}(\tau_{\frac{x}
{\eps}}\omega)\d x\right) \\
&  \leq\left(\int_{B_{\delta}(\partial\bQ)}\varphi^{2}(x)\,
\psi^{2}(\tau_{\frac{x}{\eps}}\omega)\d x\right) \, \sup_{\eps>0}
\norm{\ue}_{L^{2}(\bQ)}\\
 & \to
\left(\int_{B_{\delta}(\partial\bQ)}\int_{\Omega}\varphi^{2}(x)\,
\psi^{2}(\omega)\d\P(\omega)\d x\right)\sup_{\eps>0}\norm{\ue}_{L^{2}(\bQ)}\,.
\end{align*}
Since $\delta>0$ is arbitrary, the statement follows.
\end{proof}

\begin{lemma} \label{lem}
Let $\ue\in H^{1}(\bQ^{\eps}(\omega))$ be a sequence of functions
such that $\sup_{\eps>0}\norm{\ue}_{H^{1}(\bQ^{\eps}(\omega))}<\infty$.
Then there exist functions $u\in H^{1}(\bQ)$ and $v \in L^{2}(\bQ;L_{\pot}^{2}
(\Omega))$
such that $\sE_{\eps}\ue\weakto u$ weakly in $H^{1}(\bQ)$ as well
as $\ue\stackrel{2s}{\weakto}\chi_{G^\complement} \,u$ and 
$\nabla\ue\stackrel{2s}{\weakto}\chi_{G^\complement}\nabla u+
\chi_{G^\complement} \, v$.
\end{lemma}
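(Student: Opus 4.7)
The plan is to reduce the claim to the extension-free statements already established, namely Lemma \ref{lem:extension-Op} (uniform $H^{1}$-extension), Lemma \ref{lem:sto-conver-grad} (two-scale behavior of gradients), and Lemma \ref{lem:sto-ts-conv-on-holes} (interaction of two-scale limits with the random holes). The sequence $\ue$ lives only on $\bQ^{\eps}(\omega)$, but the extension $\sE_{\eps}\ue\in H^{1}(\bQ)$ will have all the compactness we need.

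First I would apply Lemma \ref{lem:extension-Op} to obtain
\[
\norm{\sE_{\eps}\ue}_{H^{1}(\bQ)}\le C\,\norm{\ue}_{H^{1}(\bQ^{\eps}(\omega))}\le C'\,,
\]
with $C'$ independent of $\eps$, using the fact that $\bQ^{\eps}(\omega)$ is (up to rescaling) a $(\delta,N,M)$-minimally smooth random perforation by Assumption \ref{1} and Remark \ref{7 feb:1}. By the Banach--Alaoglu theorem I extract a subsequence with $\sE_{\eps}\ue\weakto u$ weakly in $H^{1}(\bQ)$ for some $u\in H^{1}(\bQ)$. Since $\sup_\eps\norm{\nabla\sE_{\eps}\ue}_{L^{2}(\bQ)}<\infty$, a further application of Lemma \ref{lem:sto-conver-grad} (valid on $H^{1}(\bQ)$, as noted just after its statement) yields a subsequence, still denoted $\ue$, and a $v\in L^{2}(\bQ;L_{\pot}^{2}(\Omega))$ with
\[
\nabla\sE_{\eps}\ue\stackrel{2s}{\weakto}\nabla u+v\qquad\text{as }\eps\to0\,.
\]
By Lemma \ref{lem:Existence-ts-lim} the sequence $\sE_{\eps}\ue$ itself converges two-scale to some limit, which must coincide with $u(x)$ (independent of $\tilde\omega$) since $u$ is already the weak $H^{1}(\bQ)$-limit and the constant-in-$\tilde\omega$ test functions are admissible.

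Next I recall that $\sE_{\eps}\ue=\ue$ on $\bQ^{\eps}(\omega)$, so by construction $\ue=\sE_{\eps}\ue\,\chi_{\bQ^{\eps}(\omega)}$ and $\nabla\ue=\nabla\sE_{\eps}\ue\,\chi_{\bQ^{\eps}(\omega)}$ (interpreting $\ue$ and $\nabla\ue$ as extended by zero to $\bQ$, which is the reading of the statement). Applying Lemma \ref{lem:sto-ts-conv-on-holes} to the two-scale convergent sequences $\sE_{\eps}\ue$ and each component of $\nabla\sE_{\eps}\ue$ then gives
\[
\ue=\chi_{\bQ^{\eps}}\sE_{\eps}\ue\stackrel{2s}{\weakto}\chi_{G^\complement}\,u\,,\qquad \nabla\ue=\chi_{\bQ^{\eps}}\nabla\sE_{\eps}\ue\stackrel{2s}{\weakto}\chi_{G^\complement}\left(\nabla u+v\right)\,,
\]
as desired. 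The hypothesis $\sup_{\eps}\norm{\sE_{\eps}\ue}_{L^{2}(\bQ)}<\infty$ required by Lemma \ref{lem:sto-ts-conv-on-holes} is granted by the extension bound from Step 1.

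The main subtlety is actually a bookkeeping issue: we must choose a single subsequence along which the weak $H^{1}$-convergence, the two-scale convergence of $\sE_{\eps}\ue$ and the two-scale convergence of $\nabla\sE_{\eps}\ue$ all hold simultaneously. This is handled by a standard diagonal extraction --- first extract for the $H^{1}$ weak limit, then inside for the gradient's two-scale limit (which then automatically produces the two-scale limit of $\sE_{\eps}\ue$ via Lemma \ref{lem:Existence-ts-lim} together with a further subsequence). Apart from this, each ingredient (uniform extension, existence of a two-scale gradient decomposition, and multiplication by $\chi_{\bQ^{\eps}(\omega)}\to\chi_{G^\complement}(\tau_{\cdot/\eps}\omega)$) is already in place, so no genuinely new estimate is required.
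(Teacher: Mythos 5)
Your proposal is correct and follows essentially the same route as the paper: uniform boundedness of $\sE_{\eps}\ue$ in $H^{1}(\bQ)$ via Lemma \ref{lem:extension-Op}, the decomposition $\nabla u+v$ via Lemma \ref{lem:sto-conver-grad}, and restriction to the perforated domain via Lemma \ref{lem:sto-ts-conv-on-holes}. The extra details you supply (the diagonal subsequence extraction and the identification of the two-scale limit of $\sE_{\eps}\ue$ with $u$, which is most cleanly seen from the strong $L^{2}$ convergence given by Rellich) are points the paper's three-line proof leaves implicit, and they are handled adequately.
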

\begin{proof}
Lemma \ref{lem:extension-Op} implies that 
$\sup_{\eps>0}\norm{\sE_{\eps}\ue}_{H^{1}(\bQ)}<\infty$.
Hence, due to Lemma \ref{lem:sto-conver-grad} there exists $u\in H^{1}(\bQ)$
and $v \in L^{2}(\bQ;L_{\pot}^{2}(\Omega))$ such that $\sE_{\eps}\ue\weakto u$
weakly in $H^{1}(\bQ)$ and $\nabla (\sE_{\eps}\ue) \stackrel{2s}{\weakto}
\nabla u+v$.
Lemma \ref{lem:sto-ts-conv-on-holes} now implies 
$\ue\stackrel{2s}{\weakto}\chi_{G^\complement} \,u$
and $\nabla\ue\stackrel{2s}{\weakto}\chi_{G^\complement}\nabla u+
\chi_{G^\complement} \,v$. 
\end{proof}

\begin{lemma} \label{lem:conv-h1-time}
Let $\ue\in L^2(0,T;H^{1}(\bQ^{\eps}(\omega)))$ be a sequence of functions
such that 
$$\sup_{\eps>0}\norm{\ue}_{L^2(0,T;H^{1}(\bQ^{\eps}(\omega)))}+
\norm{\partial_t\ue}_{L^2(0,T;L^{2}(\bQ^{\eps}(\omega)))}<\infty\,.$$
Then there exist functions $u\in L^2(0,T;H^{1}(\bQ))$ with 
$\partial_tu\in L^2(0,T;L^2(\bQ))$ and 
$v \in L^2(0,T;L^{2}(\bQ;L_{\pot}^{2}(\Omega)))$
such that $\sE_{\eps}\ue\weakto u$ weakly in $L^2(0,T;H^{1}(\bQ))$ and 
$\sE_{\eps}\ue\to u$ strongly in $L^2(0,T;L^{2}(\bQ))$ as well 
as 
$$\ue\stackrel{2s}{\weakto}\chi_{G^\complement} \,u\,,\quad 
\partial_t\ue\stackrel{2s}{\weakto}\chi_{G^\complement} \,\partial_tu\,,
\quad \text{and}\quad 
\nabla\ue\stackrel{2s}{\weakto}\chi_{G^\complement}\nabla u+
\chi_{G^\complement} \, v\,.$$
\end{lemma}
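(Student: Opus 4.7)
The plan is to lift to the full domain $\bQ$ via the extension operator $\sE_\eps$, apply the Aubin-Lions lemma together with time-dependent two-scale compactness, and then descend back to $\bQ^\eps(\omega)$ using Lemma \ref{lem:sto-ts-conv-on-holes}. Since $\sE_\eps$ is a linear spatial operator, it commutes with $\partial_t$ when applied slice-wise in $t$, so Lemma \ref{lem:extension-Op} yields
\[
\sup_{\eps>0}\Big(\norm{\sE_\eps\ue}_{L^2(0,T;H^1(\bQ))}+\norm{\partial_t\sE_\eps\ue}_{L^2(0,T;L^2(\bQ))}\Big)<\infty.
\]
By the Aubin-Lions lemma (with $H^1(\bQ)\hookrightarrow\hookrightarrow L^2(\bQ)\hookrightarrow L^2(\bQ)$) we extract a subsequence along which $\sE_\eps\ue\to u$ strongly in $L^2(0,T;L^2(\bQ))$ and weakly in $L^2(0,T;H^1(\bQ))$, with $u\in L^2(0,T;H^1(\bQ))$ and $\partial_t u\in L^2(0,T;L^2(\bQ))$.

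Next, to identify the two-scale limit of the gradient, I would apply Lemma \ref{lem:weak-fA-conv-time} to $\nabla\sE_\eps\ue$, extracting along a further subsequence a limit $\xi\in L^2(0,T;L^2(\bQ;L^2(\Omega)))$. To pin down its structure, test against $\Phi(t,x,\tau_{x/\eps}\omega)$ with $\Phi(t,x,\cdot)\in L^2_\sol(\Omega)$, $\Phi$ smooth in $(t,x)$ and compactly supported in $(0,T)\times\bQ$. Integration by parts in $x$ removes the $\eps^{-1}$ divergence term by the solenoidal property of $\Phi$ in the fast variable, and the surviving lower-order terms pass to the limit via the strong $L^2$-convergence of $\sE_\eps\ue$ together with Lemma \ref{lem:Existence-ts-lim}. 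The resulting identity
\[
\int_0^T\!\!\int_\bQ\!\!\int_\Omega(\xi-\nabla u)\cdot\Phi\,d\P(\tilde\omega)\,dx\,dt=0
\]
forces $\xi(t,x,\cdot)-\nabla u(t,x)\perp L^2_\sol(\Omega)$ for almost every $(t,x)$, so it lies in $\cV^2_\pot(\Omega)\subset L^2_\pot(\Omega)$ by the decomposition $L^2(\Omega;\Rm)=L^2_\sol(\Omega)\oplus\cV^2_\pot(\Omega)$ recorded just before \eqref{eq:char-l2pot}. Setting $v:=\xi-\nabla u$ gives $v\in L^2(0,T;L^2(\bQ;L^2_\pot(\Omega)))$.

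Finally, on $\bQ$ we have $\ue=\chi_{\bQ^\eps(\omega)}\sE_\eps\ue$, $\partial_t\ue=\chi_{\bQ^\eps(\omega)}\partial_t\sE_\eps\ue$ and $\nabla\ue=\chi_{\bQ^\eps(\omega)}\nabla\sE_\eps\ue$. A time-dependent variant of Lemma \ref{lem:sto-ts-conv-on-holes}, obtained by repeating its $\eps d_0$-boundary-layer argument now with the continuous piecewise affine test functions of Definition \ref{def:weak-A-conv-time}, replaces $\chi_{\bQ^\eps(\omega)}$ by $\chi_{G^\complement}(\tau_{x/\eps}\omega)$ in each of the three two-scale limits, yielding exactly the claimed convergences. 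The main technical obstacle is the bookkeeping of subsequences and exceptional sets of $\omega$: we need one subsequence on which the strong $L^2$-limit and all three two-scale limits coexist, on a common full-measure set $\tilde\Omega\subset\Omega_\Psi$. This is arranged by a diagonal extraction over the countable test-function family $\mathrm{span}(\Psi\times\Lambda)$ and by Lemma \ref{lem:f-i-g-i-ergodic}, which unifies the null sets produced at each step.
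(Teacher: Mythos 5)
Your proposal is correct and follows the same overall architecture as the paper's proof (lift with $\sE_\eps$, gain strong $L^2(0,T;L^2(\bQ))$ compactness from the time-derivative bound, then descend to $\bQ^\eps(\omega)$), but it differs in two tools. First, for the strong convergence the paper invokes Simon's compactness theorem in its difference-quotient form: it checks that $\left\{\int_{t_1}^{t_2}\sE_\eps\ue(t)\,\d t\right\}_{\eps}$ is precompact in $L^2(\bQ)$ and that $\int_0^{T-h}\norm{\sE_\eps(\ue(t)-\ue(t+h))}^2_{L^2(\bQ)}\d t\leq Ch$, which only requires applying $\sE_\eps$ to differences $\ue(t)-\ue(t+h)\in H^1(\bQ^\eps)$. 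Your Aubin--Lions route instead needs the identity $\partial_t\sE_\eps\ue=\sE_\eps\partial_t\ue$ and hence that $\sE_\eps$ extends to a bounded operator on $L^2(\bQ^\eps)$ (since $\partial_t\ue$ is only in $L^2$, not $H^1$); this does hold, because the explicit construction in Lemma \ref{lem:extension-Op} satisfies the $L^p$ bound \eqref{2.1} and extends by density, but it is an extra step you should make explicit rather than dispatch with ``commutes slice-wise.'' Second, for the structure of the gradient limit the paper simply cites its Lemma \ref{lem} (built on Lemma \ref{lem:sto-conver-grad} and Lemma \ref{lem:sto-ts-conv-on-holes}), whereas you re-derive the potential/solenoidal orthogonality by testing with solenoidal $\Phi$ and using $L^{2}(\Omega;\Rm)=L_{\sol}^{2}(\Omega)\oplus\cV_{\pot}^{2}(\Omega)$; this is more self-contained and has the merit of working directly in the time-dependent setting, which the paper glosses over with ``can be obtained by generalizing previous considerations.'' Your closing remarks on the time-dependent variant of Lemma \ref{lem:sto-ts-conv-on-holes} and on the diagonal extraction over countable test families are exactly the bookkeeping the paper leaves implicit.
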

\begin{proof}
We only have to prove $\sE_{\eps}\ue\to u$ strongly in $L^2(0,T;L^{2}(\bQ))$ 
since the remaining part of the statement has either been demonstrated above 
or can be obtained by generalizing previous considerations.

We first observe that, for every times $t_1,t_2\in[0,T]$, it holds by 
Lemma \ref{lem:extension-Op} that
\begin{align*}\norm{\int_{t_1}^{t_2}\sE_{\eps}\ue(t)\d t }_{H^1(\bQ)}
& \leq \norm{\sE_{\eps}\int_{t_1}^{t_2}\ue(t)\d t }_{H^1(\bQ)} \\
& \leq C\norm{\int_{t_1}^{t_2}\ue(t)\d t }_{H^1(\bQ^\eps)}\\
& \leq CT^{\frac12}\norm{\ue}_{L^2(0,T;H^1(\bQ^\eps))}
\end{align*}
and hence $\left\{\int_{t_1}^{t_2}\sE_{\eps}\ue(t)\d t\right\}_{\eps>0}$ is 
precompact in $L^2(\bQ)$. Next, one can write by using again 
Lemma \ref{lem:extension-Op}:
\begin{align*}
    \int_0^{T-h}\left\Vert \sE_{\eps}(\ue(t)-\ue(t+h))\right\Vert^2_{L^2(\bQ)}
dt 
    & \leq C\int_0^{T-h}\left\Vert \ue(t)-\ue(t+h) \right
\Vert^2_{L^2(\bQ^\eps)} \, dt\\
    & \leq C\int_0^{T-h}\left\Vert \int_t^{t+h}\partial_t\ue(s)\d s 
\right\Vert^2_{L^2(\bQ^\eps)} \, dt\\
    & \leq C\int_0^{T-h}h\left\Vert \partial_t\ue \right
\Vert^2_{L^2(t,t+h;L^2(\bQ^\eps))} \, dt \\
    & \leq C h\norm{\partial_t\ue}^2_{L^2(0,T;L^2(\bQ^\eps))}
\end{align*}
where the constant $C$ changes in the last step. 
Since it holds $\sE_{\eps} \ue \weakto u$ in $L^2(0,T;L^{2}(\bQ))$, 
we conclude from Simon's compactness theorem 
(see Theorem 1 of \cite{simon1986}).
\end{proof}

\vskip2mm

\section{Setting of the problem and estimates} \label{sett:prob}

Throughout this paper, $\varepsilon$ will denote the general term of a
sequence of positive reals which converges to zero.
We consider in the following a system of anisotropic diffusion-coagulation
Smoluchowski-type equations which
describes the dynamics of cluster growth.
In particular, we introduce the vector-valued random function
$u^{\varepsilon}: [0,T] \times \bQ^\eps \rightarrow 
\mathbb{R}^M,$
$u^{\varepsilon}=(u_1^{\varepsilon}, \ldots, u_M^{\varepsilon})$ (with $M \in 
\mathbb{N}$ being fixed) where the variable $u_s^{\varepsilon} \geq 0$
($1 \leq s < M$) represents the concentration of $s$-clusters, that is,
clusters consisting of $s$ identical elementary particles (monomers), while
$u_M^{\varepsilon} \geq 0$ takes into account aggregations of more than
$M-1$ monomers.
We assume that the only reaction allowing clusters to coalesce to form
larger clusters is a binary coagulation mechanism,
while the movement of clusters results only from a diffusion process
described by 
a stationary ergodic random matrix 
$$
\Big(d^s_{i,j}(t, x, \tau_{\frac{x}{\varepsilon}} 
\omega)\Big)_{i,j=1,\dots,m} =: 
D_s (t, x, \tau_{\frac{x}{\varepsilon}} \omega)
\qquad 
1 \leq s \leq M,
$$
where $(t,x)\in [0,T] \times \bQ$.
Here  $D_s(t, x, \tau_{\frac{x}{\varepsilon}}\omega)$ is the realization 
(see Remark \ref{r2.1}) of
a  random matrix. 
Indeed, aging (as well as the AD itself) yields atrophy of the cerebral 
parenchyma,
inducing changes in the diffusion rate of the amyloid agglomerates. 
In addition, this
rate may vary for different regions of the brain. Finally, we have to take 
into account that A$\beta$ aggregates do not diffuse
freely in an uniform fluid: the cerebral tissue 
 consists of  large non-neuronal support cells  (the macroglia)
and the  A$\beta$ polymers move within the cerebrospinal fluid  
along the interstices between these cells
that, in turn, are stochastically distributed.

With these notations, our system reads:

\begin{eqnarray} \label{2b.1}
\begin{cases}
\frac{\displaystyle \partial{u_1^{\varepsilon}}}{\displaystyle \partial t}-
div (D_1 (t, x, \tau_{\frac{x}{\varepsilon}} \omega) \, 
 \nabla_x u_1^{\varepsilon})+u_1^{\varepsilon} \, \sum_{j=1}^M a_{1,j}
u_j^{\varepsilon}=0 & \text{in } [0,T] \times \bQ^\eps \\

\\ 
[D_1 (t, x, \tau_{\frac{x}{\varepsilon}} \omega) \, 
\nabla_x u_1^{\varepsilon}]
\cdot n=0 & \text{on } [0,T] \times \partial\bQ  \\

\\
[D_1 (t, x, \tau_{\frac{x}{\varepsilon}} \omega) \, 
\nabla_x u_1^{\varepsilon}] 
\cdot \nu_{\Gamma_{\bQ}^\eps}=
\varepsilon \, \eta(t, x, \tau_{\frac{x}{\varepsilon}} \omega) & \text{on } 
[0,T] \times \Gamma_{\bQ}^\eps \\

\\
u_1^{\varepsilon}(0,x)=U_1 & \text{in } \bQ^\eps
\end{cases}
\end{eqnarray}

if $1 < s <M$

\begin{eqnarray} \label{2b.2}
\begin{cases}
\frac{\displaystyle \partial{u_s^{\varepsilon}}}{\displaystyle \partial t}-
div (D_s (t, x, \tau_{\frac{x}{\varepsilon}} \omega)
\, \nabla_x u_s^{\varepsilon})+u_s^{\varepsilon} \, \sum_{j=1}^M a_{s,j}
u_j^{\varepsilon}=f^{\varepsilon} & \text{in } [0,T] \times \bQ^\eps \\

\\ 
[D_s (t, x, \tau_{\frac{x}{\varepsilon}} \omega) \, 
\nabla_x u_s^{\varepsilon}]
\cdot n=0 & \text{on } [0,T] \times \partial\bQ  \\

\\
[D_s (t, x, \tau_{\frac{x}{\varepsilon}} \omega) \,
\nabla_x u_s^{\varepsilon}] \cdot \nu_{\Gamma_{\bQ}^\eps}=0
& \text{on } 
[0,T] \times \Gamma_{\bQ}^\eps \\

\\
u_s^{\varepsilon}(0,x)=0  & \text{in } \bQ^\eps
\end{cases}
\end{eqnarray}

and eventually

\begin{eqnarray} \label{2b.3}
\begin{cases}
\frac{\displaystyle \partial{u_M^{\varepsilon}}}{\displaystyle \partial t}-
div (D_M (t, x, \tau_{\frac{x}{\varepsilon}} \omega)
\, \nabla_x u_M^{\varepsilon})
=g^{\varepsilon} & \quad \text{in } [0,T] \times \bQ^\eps \\

\\ 
[D_M (t, x, \tau_{\frac{x}{\varepsilon}} \omega) \, 
\nabla_x u_M^{\varepsilon}]
\cdot n=0 & \quad \text{on } [0,T] \times \partial\bQ \\

\\
[D_M (t, x, \tau_{\frac{x}{\varepsilon}} \omega) \, 
\nabla_x u_M^{\varepsilon}] 
\cdot \nu_{\Gamma_{\bQ}^\eps}=0
& \quad \text{on } 
[0,T] \times \Gamma_{\bQ}^\eps \\

\\
u_M^{\varepsilon}(0,x)=0  & \quad \text{in } \bQ^\eps
\end{cases}
\end{eqnarray}
where the
gain terms $f^{\varepsilon}$ and $g^{\varepsilon}$ in (\ref{2b.2})
and (\ref{2b.3}) are given by

\begin{equation} \label{2b.4}
f^{\varepsilon}=\frac{1}{2} \, \sum_{j=1}^{s-1} a_{j,s-j} \, 
u_j^{\varepsilon} \,
u_{s-j}^{\varepsilon}
\end{equation}

\begin{equation} \label{2b.5}
g^{\varepsilon}=\frac{1}{2} \, \sum_{\substack{j+k \geq M \\ 
k< M (\text{if } j=M) \\ j<M (\text{if } k=M)}} 
a_{j,k} \, u_j^{\varepsilon} \, u_k^{\varepsilon}.
\end{equation}
The kinetic coefficients $a_{i,j}$ represent a reaction in which an
($i+j$)-cluster is formed from an $i$-cluster and a $j$-cluster.
Therefore, they can be interpreted as "coagulation rates" and are symmetric
$a_{i,j}=a_{j,i} >0$  ($i,j=1, \ldots, M$), but $a_{M,M}=0$.
Let us remark that the meaning of $u_M^{\varepsilon}$ differs from that of
$u_s^{\varepsilon}$ ($s < M$), since it describes the sum of the 
densities of all
the 'large' assemblies. It is assumed that large assemblies exhibit all the
same coagulation properties and do not coagulate with each other.

The production of $\beta$-amyloid peptide by the malfunctioning neurons is
described imposing a non-homogeneous Neumann condition on the boundary
of the holes, randomly selected within our domain.
To this end, we consider on $\Gamma_{\bQ}^\eps$ in Eq. (\ref{2b.1}) a
stationary ergodic random function 
$\eta=\eta(t,x, \tau_{\frac{x}{\varepsilon}}\omega)$.
Here  $\eta(t,x, \tau_{\frac{x}{\varepsilon}}\omega)$ is the realization 
(see Remark \ref{r2.1}) of
a  random function: 

\begin{equation} \label{2b.6}
\eta : [0,T] \times \overline{\bQ}\times \Omega \rightarrow [0,1]
\end{equation}
where the value '0' is assigned to 'healthy' neurons 
while all the other
values in $]0,1]$ indicate different degrees of malfunctioning.
Moreover, we assume that $\eta$ is an increasing function of time, since once
the neuron has become 'ill', it can no longer regain its original state of
health.

\bigskip

Further hypotheses are listed below:
\\
(H.1) the diffusion coefficients satisfy
$d^s_{i,j}\in C^1\left([0,T]\times\bQ;C_b^1(\Omega)\right)$
for $i,j=1,\dots,m$,
 $s=1,\dots, M$.
  We put $$\Lambda^\star:= 
\sup_{i,j, s} \|d_{i,j}^s\|_{C^1\left([0,T]\times\bQ;C_b^1(\Omega)\right)}.
$$
\\
In particular, the map
$
(t,x,\omega)\to D_s(t,x,\tau_{\frac{x}{\varepsilon}}\omega)$ is 
continuously differentiable;
\\
(H.2) $d^s_{i,j} = d^s_{j,i}$, for $i,j=1,\dots, m$, $s=1,\dots, M$;
\\
(H.3)   there exists $0<\lambda\le \Lambda$ such that
$$
\lambda |\xi|^2\le\sum_{i,j=1}^m d^s_{i,j}(t,x,\tau_{\frac{x}{\varepsilon}} 
\omega)\xi_i\xi_j \le 
\Lambda |\xi|^2
$$
for all $s=1,\dots, M$, $\xi\in \mathbb R^m$, $(t,x)\in [0,T]\times 
\overline{\bQ}$
and for $\mathbb P$-a.e. $\omega\in \Omega$.

\bigskip
Moreover, the function $\eta$, appearing in (\ref{2b.1}), is a given bounded 
function
satisfying the following conditions:
\\
(H.4) $\eta\in C^1\left([0,T]\times\bQ;C_b^1(\Omega)\right)$; 
\\
(H.5) $\eta(0, \cdot,\cdot)=0$ and
$U_1$ is a positive constant such that
\begin{equation}\label{star}
U_1 \leq \|\eta\|_{L^\infty([0,T]\times \overline{\bQ}\times\Omega)}.
\end{equation}   
We can repeat now almost verbatim the arguments of \cite{FL_wheeden}, 
Theorems 2.1, 2.2, 2.3
and 2.4 to obtain the following ``deterministic'' 
(i.e. for fixed $\omega\in \Omega$) existence and
regularity result.

\begin{theorem}\label{deterministic} 
Suppose Assumption \ref{1} (where additionally $G(\omega)$ has a smooth 
boundary) and  
(H.1) - (H.5) hold. Then 
for $\mathbb P$-a.e. $\omega\in\Omega$ and for any $\varepsilon>0$ the system 
\eqref{2b.1} - \eqref{2b.3}
admits a unique maximal classical solution 
$$
u^\varepsilon_\omega=
(u_{\omega,1}^{\varepsilon},\dots , u_{\omega,M}^{\varepsilon})
$$
such that

\begin{itemize}
\item[(i)] there exists $\alpha\in (0,1)$, $\alpha$ 
depending only on $N, \lambda,
\Lambda^\star$,  $\varepsilon$ and $\omega$, such that 
$u^\varepsilon\in C^{1+\alpha/2, 2+\alpha}
([0,T]\times\bQ^\varepsilon,\mathbb R^M)$ for 
$\mathbb P$-a.e. $\omega\in \Omega$
and
\begin{equation}\label{april 8}
\| u^\varepsilon_\omega\|_{C^{1+\alpha/2, 2+\alpha}([0,T]\times
\bQ^\varepsilon,
\mathbb R^M)}
\le C_0 = C_0(U_1, \Vert \eta \Vert_{L^\infty([0,T]\times 
\overline{\bQ}\times\Omega)}, K,\varepsilon, \omega,\alpha);
\end{equation}
\item[(ii)] 
$u_{\omega,j}^\varepsilon(t,x)>0$  for 
$(t,x)\in [0,T]\times \bQ^\varepsilon$, $\mathbb P$-a.e. $\omega\in\Omega$ 
and $j=1,\dots, M$.
\end{itemize}
\end{theorem}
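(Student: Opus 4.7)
The plan is to fix $\omega$ in a set of full measure and reduce the statement to a deterministic parabolic system on a fixed smooth domain, where the argument of \cite{FL_wheeden}, Theorems 2.1--2.4, applies almost verbatim. By (H.1) and the fact that $f\in C_b^1(\Omega)$ propagates to $f(\tau_\cdot\omega)\in C_b^1(\Rm)$ for every $\omega$ (noted right after the definition of $C_b^1(\Omega)$), for such $\omega$ the realizations $(t,x)\mapsto D_s(t,x,\tau_{x/\eps}\omega)$ and $(t,x)\mapsto \eta(t,x,\tau_{x/\eps}\omega)$ belong to $C^1([0,T]\times\overline{\bQ})$; by (H.3), the diffusion matrices are uniformly elliptic with constants $\lambda,\Lambda$. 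Under the strengthened Assumption \ref{1}, $\partial\bQ^\eps(\omega)$ is smooth and disjoint from $\partial\bQ$ (cf.\ Remark \ref{7 feb:1}), so for fixed $\omega$ the system \eqref{2b.1}--\eqref{2b.3} is a classical nonlinear parabolic problem with $C^1$ coefficients and data on a fixed smooth perforated domain.

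For this deterministic system, I would proceed in four steps mirroring \cite{FL_wheeden}. First, nonnegativity: test the $u_1^\eps$-equation with the negative part $-(u_1^\eps)_-$ and integrate. The absorption term $u_1^\eps\sum_j a_{1,j}u_j^\eps$ and the Neumann datum $\eps\eta\ge 0$ both carry the right sign, and $u_1^\eps(0)=U_1>0$, so Gronwall gives $(u_1^\eps)_-\equiv 0$. Inducting on $s$, the gain terms $f^\eps$ and $g^\eps$ are nonnegative once $u_1^\eps,\dots,u_{s-1}^\eps\ge 0$, so the same energy argument (with zero Neumann and zero initial data) yields $u_s^\eps\ge 0$ for $2\leq s\leq M$. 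Second, $L^\infty$ bounds: testing with $u_s^\eps$ and summing gives an $L^\infty(0,T;L^2(\bQ^\eps))\cap L^2(0,T;H^1(\bQ^\eps))$ control; a Moser/Stampacchia iteration, or the simpler observation that $\Sigma^\eps:=\sum_{s=1}^M u_s^\eps$ satisfies a parabolic inequality whose quadratic source can be absorbed via Gronwall, produces an $L^\infty$ bound depending on $U_1$, $\|\eta\|_\infty$, $\eps$, and $\omega$. Third, local existence follows from a Banach fixed-point in $C([0,T_0];L^\infty(\bQ^\eps))^M$: freeze the quadratic nonlinearities, solve the decoupled linear parabolic problems by classical theory, and contract for small $T_0$; the a priori bounds then extend the solution to $[0,T]$. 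Fourth, parabolic Schauder estimates (Ladyzhenskaya--Solonnikov--Ural'tseva) bootstrap the regularity from Hölder right-hand sides and boundary data to the claimed $C^{1+\alpha/2,2+\alpha}$-regularity, and uniqueness follows from a standard Gronwall estimate on the difference of two solutions.

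The main obstacle I foresee is not any single step, but verifying that the Schauder and $L^\infty$ constants depend only on the quantities listed in \eqref{april 8} and not on uncontrolled geometric features. For fixed $\omega$ only finitely many holes $\eps G_k(\omega)$ lie in $\bQ$ (each has diameter at most $\eps d_0$ and mutual distance at least $\eps d_0$), but their number grows as $\eps\to 0$, which is why the dependence on $\eps$ and $\omega$ cannot be avoided at this stage. The uniform minimal-smoothness of $G(\omega)$ with constants $(\delta,N,M)$, however, guarantees that the local Schauder constants near each boundary component $\partial(\eps G_k(\omega))$ are uniform in $k$, which is precisely the structural input needed to run the classical parabolic theory on the perforated $\bQ^\eps(\omega)$. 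Once this uniformity is in place, the generalization from the scalar diffusion of \cite{FL_wheeden} to the matrix-valued, uniformly elliptic $D_s(t,x,\tau_{x/\eps}\omega)$ is routine, and one recovers the estimate \eqref{april 8} together with the strict positivity $u_{\omega,j}^\eps>0$ by applying the strong maximum principle to each component.
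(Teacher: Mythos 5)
Your proposal is correct and follows essentially the same route as the paper: the paper's entire ``proof'' of Theorem \ref{deterministic} is the remark that, for fixed $\omega$, the system becomes a deterministic parabolic problem on a smooth perforated domain to which the arguments of \cite{FL_wheeden}, Theorems 2.1--2.4 (nonnegativity, $L^\infty$ bounds, fixed-point existence, Schauder regularity via \cite{ladyzenskaya_et_al}) apply almost verbatim. Your expansion of those steps, including the observation that (H.1) and the continuity of $(x,\omega)\mapsto\tau_x\omega$ make the realizations $C^1$ and that the constant in \eqref{april 8} is allowed to depend on $\varepsilon$ and $\omega$ at this stage, is consistent with what the paper intends.
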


In the sequel we shall rely on the fact that statements that hold 
$\mathbb P$-a.e.
can be seen as deterministic assertions, since they hold whenever 
$\bQ^\varepsilon$
is a set enjoying the regularity properties described in Remark \ref{7 feb:1}, 
Assumption \ref{1} and Remark \ref{7 feb:2}.

Arguing as in \cite{FL_wheeden}, the first and crucial step will consist of 
proving that the $u^\varepsilon_{\omega,j}$ are
equibounded in $L^\infty([0,T] \times \bQ^{\varepsilon})$ for 
$\mathbb P$-a.e. $\omega\in\Omega$ and
$j=1,\dots,M$.

In particular, an uniform bound for $u_{\omega}^{\varepsilon}$ in 
$L^{\infty} ([0, T] \times 
\bQ^{\varepsilon})$ is provided by the following statement: 

\begin{theorem} \label{l4.2}
Let $u^\varepsilon_\omega=(u_{\omega,1}^{\varepsilon},\dots , 
u_{\omega,M}^{\varepsilon})$ be 
as in Theorem \ref{deterministic}.
Then

 \begin{equation} \label{4.15} 
\Vert u_{\omega,1}^{\varepsilon} \Vert_{L^{\infty}
([0,T] \times \bQ^{\varepsilon})}
\leq 
|U_1|+
c \, \Vert \eta \Vert_{L^\infty([0,T]\times 
\overline{\bQ}\times\Omega)},
\end{equation}
for $\mathbb P$-a.e. $\omega\in\Omega$,
where $c$ is independent of $\varepsilon>0$.

In addition, there exists $K>0$ such that

\begin{equation} \label{4.61}
\Vert u_{\omega,j}^{\varepsilon} \Vert_{L^{\infty} 
([0,T] \times \bQ^{\varepsilon})}
\leq K
\end{equation}
for $\mathbb P$-a.e. $\omega\in\Omega$, uniformly with respect to 
$\varepsilon>0$.
\end{theorem}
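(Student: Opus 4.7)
The plan is to proceed by induction on the cluster size $s\in\{1,\dots,M\}$, exploiting the hierarchical coupling of the Smoluchowski system: for $s>1$ the source $f^\eps$ (and $g^\eps$ when $s=M$) is a quadratic polynomial in the $u_j^\eps$ with $j<s$ only, while the loss term $u_s^\eps\sum_j a_{s,j}u_j^\eps$ is non-negative by the positivity clause (ii) of Theorem \ref{deterministic} and can be discarded when looking for an upper bound. Thus once (\ref{4.15}) is established, the remaining bounds (\ref{4.61}) follow in finitely many steps by propagating $L^\infty$ control from $u_1^\eps$ upward through the system.

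The main obstacle is the base case, since the inhomogeneous Neumann flux $\eps\eta$ lives on a surface $\Gamma_\bQ^\eps$ of Hausdorff measure of order $\eps^{-1}$, and the $\eps$-factor in the boundary datum must be extracted carefully. I would test the equation for $u_1^\eps$ against $(u_1^\eps-k)^+$ for $k\geq|U_1|$ (so the test function vanishes at $t=0$), drop the non-negative loss term, and use the coercivity assumption (H.3) to arrive at
\[
\tfrac12\norm{(u_1^\eps-k)^+(t)}_{L^2(\bQ^\eps)}^2+\lambda\int_0^t\norm{\nabla(u_1^\eps-k)^+}_{L^2(\bQ^\eps)}^2\,d\tau\leq\eps\norm{\eta}_\infty\int_0^t\int_{\Gamma_\bQ^\eps}(u_1^\eps-k)^+\,d\sigma\,d\tau\,.
\]
The critical step is to close this via the $\eps$-uniform trace inequality $\eps\norm{w}_{L^2(\Gamma_\bQ^\eps)}^2\leq C(\norm{w}_{L^2(\bQ^\eps)}^2+\eps^2\norm{\nabla w}_{L^2(\bQ^\eps)}^2)$, which is available because, by Assumption \ref{1} and Remark \ref{7 feb:1}, the rescaled set $\eps^{-1}\bQ^\eps(\omega)$ is $(\delta,N,M)$-minimally smooth with $\omega$- and $\eps$-independent constants, so the standard trace inequality on the reference domain $\bQ$ can be pulled back through the uniform extension operator $\sE_\eps$ of Lemma \ref{lem:extension-Op}. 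A Stampacchia iteration on the level sets $\{u_1^\eps(t,\cdot)>k\}$ then produces a threshold $k_0=|U_1|+c\norm{\eta}_{L^\infty([0,T]\times\overline\bQ\times\Omega)}$ above which $(u_1^\eps-k_0)^+\equiv 0$, giving (\ref{4.15}) with a constant independent of $\eps$ and of $\omega$ (on the full-measure set where $\bQ^\eps(\omega)$ enjoys the geometric properties of Assumption \ref{1}).

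For the inductive step, suppose $\norm{u_j^\eps}_{L^\infty([0,T]\times\bQ^\eps)}\leq K_{j-1}$ holds $\P$-a.s.\ and uniformly in $\eps$ for all $j<s$. Then $f^\eps=\tfrac12\sum_{j=1}^{s-1}a_{j,s-j}u_j^\eps u_{s-j}^\eps$ (respectively $g^\eps$ when $s=M$) is bounded in $L^\infty$ by a constant $F_s$ depending only on $K_1,\dots,K_{s-1}$ and the coagulation rates. Discarding the non-negative loss term, $u_s^\eps$ is a classical subsolution of $\partial_t v-\mathrm{div}(D_s\nabla v)=F_s$ with homogeneous Neumann data on $\partial\bQ\cup\Gamma_\bQ^\eps$ and zero initial value; the spatially constant function $\bar v(t)=F_s t$ is an exact solution of the same problem, so the classical parabolic maximum principle (applicable thanks to the $C^{1+\alpha/2,2+\alpha}$-regularity from Theorem \ref{deterministic}(i)) yields $u_s^\eps\leq F_s T$. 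Setting $K:=\max_{1\leq s\leq M}K_{s-1}$ closes the induction and produces (\ref{4.61}) with an $\eps$-uniform constant, as claimed.
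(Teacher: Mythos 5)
Your strategy --- Stampacchia truncation for $u_1^{\eps}$ using the $\eps$-uniform trace inequality on the minimally smooth perforations, then induction on the cluster size, discarding the nonnegative loss terms --- is sound in outline and is essentially the content of what the paper invokes: the paper itself gives no details, merely extending $u^{\eps}_{\omega}$ to all of $\bQ$ via Lemma \ref{lem:extension-Op} and deferring to Theorems 2.2 and 2.3 of \cite{FL_wheeden}, which rest on the same Ladyzhenskaya--Solonnikov--Ural'ceva truncation machinery and the same hierarchical induction. Your base case correctly isolates the key mechanism: the factor $\eps$ in the Neumann datum against the surface measure $\mathcal{H}^{m-1}(\Gamma_{\bQ}^{\eps})=O(\eps^{-1})$, closed by the scaled trace inequality whose constant is uniform precisely because of Assumption \ref{1} and Remark \ref{7 feb:1}; and since, after dropping the absorption term, $u_1^{\eps}$ is a subsolution of a problem linear in $\eta$, the threshold $|U_1|+c\norm{\eta}_{L^{\infty}}$ has the right form.

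There is, however, a genuine gap at $s=M$. You claim that $g^{\eps}$ is a quadratic polynomial in the $u_j^{\eps}$ with $j<M$ only, but the index set in \eqref{2b.5} explicitly admits pairs with $j=M$, $k<M$ (and symmetrically), so $g^{\eps}$ contains the contribution $\sum_{k<M}a_{M,k}\,u_M^{\eps}u_k^{\eps}$, i.e.\ it is \emph{linear in $u_M^{\eps}$ itself}, with a coefficient controlled by the already-established bounds $K_1,\dots,K_{M-1}$ but not by a constant. Hence $g^{\eps}$ is not bounded by an $F_M$ depending only on the lower-order bounds, the spatially constant function $F_M t$ is not a supersolution, and the induction as written does not close. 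The step is repairable: writing $\partial_t u_M^{\eps}-\mathrm{div}(D_M\nabla u_M^{\eps})\le H+C\,u_M^{\eps}$ with $H$ and $C$ determined by $K_1,\dots,K_{M-1}$ and the $a_{j,k}$, one compares instead with the Gronwall-type supersolution $\bar v(t)=\frac{H}{C}\left(e^{Ct}-1\right)$ (after the substitution $w=e^{Ct}z$ to handle the zeroth-order term in the maximum principle), which restores \eqref{4.61} for $s=M$ with a constant still uniform in $\eps$ and $\omega$. You should make this correction explicit; for $1<s<M$ your argument is fine since there $f^{\eps}$ genuinely involves only indices $j\le s-1$.
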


\begin{proof} Thanks to extension Lemma \ref{lem:extension-Op}, 
the function $u^\varepsilon_\omega$
can be continued on all $[0,T] \times \bQ$. Therefore we can repeat step by 
step the arguments of \cite{FL_wheeden},
Theorems 2.2 and 2.3, that in turn rely on \cite{ladyzenskaya_et_al} 
(see also \cite{nittka} and \cite{wrz}).
\end{proof}

Therefore
 
\begin{theorem}[\cite{FL_wheeden}, Theorems 3.1. and 3.2] \label{t1.6}
The sequence $(\nabla_x u_{\omega,j}^{\varepsilon})_{\varepsilon >0}$ 
($1 \leq j \leq M$) is bounded in
$L^2([0,T] \times \bQ^{\varepsilon})$ for $\mathbb P$-a.e. $\omega\in \Omega$, 
uniformly in $\varepsilon$.

In addition, the sequence 
$(\partial_t u_{\omega,j}^{\varepsilon})_{\varepsilon >0}$  ($1 \leq j \leq M$)
is bounded in
$L^2([0,T] \times \bQ^{\varepsilon})$ for $\mathbb P$-a.e. $\omega\in \Omega$, 
uniformly in $\varepsilon$.
\end{theorem}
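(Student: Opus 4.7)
The plan is to derive the two bounds by standard parabolic energy methods, testing each equation with $u^\varepsilon_{\omega,j}$ and $\partial_t u^\varepsilon_{\omega,j}$ respectively, and exploiting three inputs: the uniform $L^\infty$ bound of Theorem \ref{l4.2}, the ellipticity assumption (H.3), and an ergodic control of the random surface measure $\mu_{\Gamma(\omega)}^{\varepsilon}(\bQ)$.

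First, for the gradient estimate, I multiply the evolution equation for $u^\varepsilon_{\omega,j}$ by $u^\varepsilon_{\omega,j}$, integrate over $\bQ^\varepsilon$, and integrate by parts in space. For $j=1$ this yields
\begin{equation*}
\tfrac12 \tfrac{d}{dt}\|u^\varepsilon_{\omega,1}\|^2_{L^2(\bQ^\varepsilon)} + \int_{\bQ^\varepsilon} D_1\nabla u^\varepsilon_{\omega,1}\cdot\nabla u^\varepsilon_{\omega,1}\,dx + \int_{\bQ^\varepsilon}(u^\varepsilon_{\omega,1})^2 \sum_j a_{1,j}u^\varepsilon_{\omega,j}\,dx = \varepsilon\int_{\Gamma_{\bQ}^\varepsilon}\eta\, u^\varepsilon_{\omega,1}\,d\mathcal{H}^{m-1}.
\end{equation*}
The coagulation term on the left is nonnegative (the $u^\varepsilon_{\omega,j}$ are nonnegative by Theorem \ref{deterministic}) and may be dropped; the diffusion term is bounded below by $\lambda\|\nabla u^\varepsilon_{\omega,1}\|^2_{L^2}$ by (H.3). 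For the boundary term I bound $u^\varepsilon_{\omega,1}$ by $K$ (Theorem \ref{l4.2}) and rewrite $\varepsilon\int_{\Gamma_{\bQ}^\varepsilon}\eta u^\varepsilon_{\omega,1}\,d\mathcal{H}^{m-1}\leq C\,\mu^\varepsilon_{\Gamma(\omega)}(\bQ)$ via (\ref{rescaling 2}). By Lemma \ref{lem:ergodicity-cont-functions} with $\phi\equiv 1$ and $f\equiv 1$, this rescaled mass tends a.s. to $|\bQ|\mugammapalm(\Omega)$, hence is bounded uniformly in $\varepsilon$ on a set $\tilde\Omega\subset\Omega$ of full $\P$-measure. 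Integrating in $t$ produces the desired $L^2$ gradient bound for $j=1$. The cases $2\leq s<M$ and $s=M$ use the same test function; the only difference is the appearance of $f^\varepsilon$, $g^\varepsilon$ on the right-hand side, which are uniformly bounded in $L^\infty$ by Theorem \ref{l4.2}, and no boundary contribution arises.

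Next, for the time-derivative estimate, I test with $\partial_t u^\varepsilon_{\omega,j}$. For $j=1$ integration by parts in space gives
\begin{equation*}
\int_{\bQ^\varepsilon}|\partial_t u^\varepsilon_{\omega,1}|^2 dx + \int_{\bQ^\varepsilon} D_1\nabla u^\varepsilon_{\omega,1}\cdot\nabla\partial_t u^\varepsilon_{\omega,1}\,dx + \int_{\bQ^\varepsilon}\bigl(u^\varepsilon_{\omega,1}\textstyle\sum_j a_{1,j}u^\varepsilon_{\omega,j}\bigr)\partial_t u^\varepsilon_{\omega,1}\,dx = \varepsilon\int_{\Gamma_{\bQ}^\varepsilon}\eta\,\partial_t u^\varepsilon_{\omega,1}\,d\mathcal{H}^{m-1}.
\end{equation*}
The diffusive term I rewrite as $\tfrac{d}{dt}\tfrac12 \int D_1\nabla u^\varepsilon_{\omega,1}\cdot\nabla u^\varepsilon_{\omega,1}-\tfrac12\int \partial_t D_1\nabla u^\varepsilon_{\omega,1}\cdot\nabla u^\varepsilon_{\omega,1}$; the second piece is controlled by (H.1) and the $L^2$-gradient bound just obtained. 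The nonlinear term is controlled by Cauchy--Schwarz together with the $L^\infty$ bounds, absorbing a small fraction of $\|\partial_t u^\varepsilon_{\omega,1}\|^2_{L^2}$ into the left side via Young's inequality. The delicate piece is the boundary integral, which I treat by integrating in $t$ and transferring $\partial_t$ onto $\eta$:
\begin{equation*}
\int_0^T\!\varepsilon\!\int_{\Gamma_{\bQ}^\varepsilon}\!\eta\,\partial_t u^\varepsilon_{\omega,1}\,d\mathcal{H}^{m-1}dt = \varepsilon\!\int_{\Gamma_{\bQ}^\varepsilon}\!\eta(T,\cdot)\,u^\varepsilon_{\omega,1}(T,\cdot)\,d\mathcal{H}^{m-1} - \int_0^T\!\varepsilon\!\int_{\Gamma_{\bQ}^\varepsilon}\!\partial_t\eta\, u^\varepsilon_{\omega,1}\,d\mathcal{H}^{m-1}dt,
\end{equation*}
since $\eta(0,\cdot)=0$ by (H.5). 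Both remaining terms are dominated by $C\mu^\varepsilon_{\Gamma(\omega)}(\bQ)$, hence uniformly bounded on $\tilde\Omega$ as above. Cases $s\geq 2$ are easier: no boundary contribution arises and $f^\varepsilon$, $g^\varepsilon$ are bounded in $L^\infty$.

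The main obstacle is the boundary term for $s=1$: the prefactor $\varepsilon$ must exactly balance the $\varepsilon^{-1}$ blow-up of $\mathcal{H}^{m-1}(\Gamma_{\bQ}^\varepsilon)$, and this balance is precisely what is encoded in the rescaled Palm measure \eqref{rescaling 2}. Establishing the uniform bound for $\mu^\varepsilon_{\Gamma(\omega)}(\bQ)$ on a set of full measure is where the stochastic setting truly enters (via Campbell's formula and Lemma \ref{lem:ergodicity-cont-functions}); in the periodic framework of \cite{FL_wheeden} this step is automatic, but here it requires choosing $\omega$ outside the exceptional $\P$-null set. Once this is in place, the remainder of the argument is a straightforward adaptation of the deterministic computations in \cite{FL_wheeden}, Theorems 3.1 and 3.2, applied on the realization $\bQ^\varepsilon(\omega)$.
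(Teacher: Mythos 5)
Your proof is correct and is essentially the argument the paper relies on: it simply cites the energy estimates of \cite{FL_wheeden}, Theorems 3.1 and 3.2 (testing with $u^\varepsilon_{\omega,j}$ and $\partial_t u^\varepsilon_{\omega,j}$, using (H.1)--(H.3), the $L^\infty$ bounds of Theorem \ref{l4.2}, and the time-integration-by-parts on the boundary term together with $\eta(0,\cdot)=0$ and $\nabla u^\varepsilon_{\omega,1}(0,\cdot)=0$), carried out for a fixed admissible realization $\bQ^\varepsilon(\omega)$. The one genuinely stochastic ingredient you supply explicitly --- the a.s.\ uniform-in-$\varepsilon$ bound on $\mu^\varepsilon_{\Gamma(\omega)}(\bQ)$ via Lemma \ref{lem:ergodicity-cont-functions} (requiring finite intensity of $\mu_{\Gamma(\omega)}$ and restriction to a full-measure set of $\omega$) --- is exactly what replaces the automatic surface-area count of the periodic case, and the paper leaves it implicit.
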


\vskip2mm

\section{Homogenization } \label{sec:homo}

Our main statement shows that it is possible to homogenize the set of 
Eqs. (\ref{2b.1})-(\ref{2b.3}) as $\varepsilon \rightarrow 0$.

\begin{theorem} \label{homo_twoscale}
Let $u_s^{\varepsilon} (t,x)$ ($1 \leq s \leq M$) be a family of nonnegative
classical solutions to the system (\ref{2b.1})-(\ref{2b.3}).
Denote by a tilde the extension by zero outside $\bQ^\eps (\omega)$ and let
$\chi_{G^\complement}$ represent the characteristic function of the random
set ${G^\complement}(\omega)$.
Then, the sequences $(\widetilde{u_s^{\varepsilon}})_{\varepsilon>0}$,
$(\widetilde{\nabla_x u_s^{\varepsilon}})_{\varepsilon>0}$ and
$(\widetilde{\partial_t u_s^{\varepsilon}})_{\varepsilon>0}$ 
($1 \leq s \leq M$)
stochastically two-scale converge to:
$[\chi_{G^\complement} \, u_s(t,x)]$,
$[\chi_{G^\complement} (\nabla_x u_s(t,x)+v_s(t,x,\omega))]$,
$[\chi_{G^\complement} \, \partial_t \, u_s(t,x)]$ ($1 \leq s \leq M$),
respectively.
The limiting functions 
$[ (t,x) \mapsto u_s(t,x), (t,x,\omega) \mapsto v_s(t,x,\omega) ]$
($1 \leq s \leq M$) are the unique solutions lying in
$L^2 (0,T; H^1 (\bQ)) \times L^2 ([0,T] \times \bQ; L_{\pot}^{2}(\Omega)) $
of the following two-scale homogenized systems:

If $s=1$:

\begin{eqnarray} \label{5.15} 
\begin{cases}
\theta \, \frac{\displaystyle \partial u_1}{\displaystyle \partial t}(t,x)
-div_x \bigg[  D_1^{\star}(t,x) \, 
\nabla_x u_1(t,x) \bigg] \\
+\theta \, u_1(t,x)
\sum_{j=1}^M a_{1,j} \, u_j(t,x)  
= \displaystyle \int_{\Omega} \chi_{\Gamma_{G^\complement}}\,
\eta(t,x,\omega) \, d\mugammapalm(\omega)  
& \text{ in }  [0,T] \times \bQ\\

[ D_1^{\star}(t,x) \, \nabla_x u_1 (t,x)] \cdot n=0 & \text{ on }  
[0,T] \times \partial \bQ \\

u_1(0,x)=U_1 & \text{ in }  \bQ
\end{cases}
\end{eqnarray}

If $1 < s < M$:

\begin{eqnarray} \label{5.16} 
\begin{cases}
\theta \, \frac{\displaystyle \partial u_s}{\displaystyle \partial t}(t,x)
-div_x \bigg[  D_s^{\star}(t,x) \, 
\nabla_x u_s(t,x) \bigg] \\
+\theta \,  u_s(t,x)
\sum_{j=1}^M a_{s,j} \,  u_j(t,x) \\ 
=\frac{\displaystyle \theta}{\displaystyle 2} \sum_{j=1}^{s-1} a_{j,{s-j}}
\, u_j(t,x) \, u_{s-j}(t,x)
& \text{ in }  [0,T] \times \bQ  \\

\\
[ D_s^{\star}(t,x) \, \nabla_x u_s (t,x)] \cdot n=0 & \text{ on }  
[0,T] \times \partial \bQ \\

u_s(0,x)=0 & \text{ in }  \bQ
\end{cases}
\end{eqnarray}

If $s=M$:

\begin{eqnarray} \label{5.17} 
\begin{cases}
\theta \, \frac{\displaystyle \partial u_M}{\displaystyle \partial t}(t,x)
-div_x \bigg[  D_M^{\star}(t,x) \, 
\nabla_x u_M(t,x) \bigg]  \\
=\frac{\displaystyle \theta}{\displaystyle 2} 
\sum_{\substack{j+k \geq M \\ k< M (\text{if } j=M) \\ 
j<M (\text{if } k=M)}} a_{j,k} \,
u_j(t,x) \, u_k(t,x)
& \text{ in }  [0,T] \times \bQ \\

\\
[ D_M^{\star}(t,x) \, \nabla_x u_M (t,x)] \cdot n=0 & \text{ on }  
[0,T] \times \partial \bQ \\

u_M(0,x)=0 & \text{ in }  \bQ
\end{cases}
\end{eqnarray}
where
$$\theta=\int_{\Omega} \chi_{G^\complement} \, d\mupalm({\omega})=
\P(G^\complement)$$
represents the fraction of volume occupied by ${G^\complement}$ and, 
for every $1\leq s\leq M$, 
$D_s^{\star}(t,x)$ is a deterministic matrix, called "effective diffusivity", 
defined by

$$(D_s^{\star})_{ij}(t,x)=\displaystyle \int_{\Omega} \chi_{G^\complement} \,
D_s(t,x,\omega) ( w_i(t,x,\omega)+ \hat{e}_i) \cdot
(w_j(t,x,\omega)+ \hat{e}_j) \,d\P({\omega}) $$
with $\hat{e}_i$ being the $i$-th canonical unit vector in $\mathbb{R}^m$, and
$(w_i)_{1 \leq i \leq m} \in L^2 ([0,T] \times \bQ; 
L_{\pot}^{2}(G^\complement))$ the family of 
solutions of the following microscopic problem
\begin{eqnarray} \label{5.18}
\begin{cases}
-div_{\omega} [D_s(t,x,\omega)  (w_i(t,x,\omega)+ \hat{e}_i)]=0 \, \, \, \qquad 
\; \; \; \; \; 
\text{ in} \, \, G^\complement \\
D_s(t,x,\omega) [w_i(t,x,\omega)+\hat{e}_i] \cdot \nu_{\Gamma_{G^\complement}}
=0 \, \, \, \qquad 
\; \; 
\text{ on} \, \, \Gamma_{G^\complement}.
\end{cases}
\end{eqnarray}
Finally,
$$v_s(t,x,\omega)=\sum_{i=1}^m\, w_i (t,x,\omega) \,
\frac{\displaystyle \partial u_s}{\displaystyle \partial x_i}(t,x)
\; \; \; (1 \leq s \leq M).$$

\end{theorem}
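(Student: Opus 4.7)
The plan is to follow the standard two-scale/corrector strategy adapted to the stochastic setting, in four steps: extract two-scale limits from the a priori estimates, pass to the limit in a suitable oscillating weak formulation, handle the Smoluchowski nonlinearities by strong convergence, and then decouple the two-scale limit into a cell problem plus a macroscopic problem with effective coefficients.

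First, the uniform bounds on $u_s^{\varepsilon}$, $\nabla u_s^{\varepsilon}$ and $\partial_t u_s^{\varepsilon}$ from Theorems \ref{l4.2} and \ref{t1.6} place the extensions $\sE_{\varepsilon} u_s^{\varepsilon}$ in a bounded set of $L^{2}(0,T;H^{1}(\bQ))\cap H^{1}(0,T;L^{2}(\bQ))$. Lemma \ref{lem:conv-h1-time} then produces, up to a subsequence, limits $u_s\in L^{2}(0,T;H^{1}(\bQ))$ with $\partial_t u_s\in L^{2}([0,T]\times\bQ)$ and correctors $v_s\in L^{2}([0,T]\times\bQ;L^{2}_{\pot}(\Omega))$ such that $\widetilde{u_s^{\varepsilon}}\stackrel{2s}{\weakto}\chi_{G^\complement}u_s$, $\widetilde{\partial_t u_s^{\varepsilon}}\stackrel{2s}{\weakto}\chi_{G^\complement}\partial_t u_s$ and $\widetilde{\nabla u_s^{\varepsilon}}\stackrel{2s}{\weakto}\chi_{G^\complement}(\nabla u_s+v_s)$, together with the strong convergence $\sE_{\varepsilon} u_s^{\varepsilon}\to u_s$ in $L^{2}([0,T]\times\bQ)$.

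Next I would test the weak formulation of \eqref{2b.1}--\eqref{2b.3} against the admissible oscillating test functions $\phi^{\varepsilon}(t,x):=\varphi(t,x)+\varepsilon\,\varphi_1(t,x)\,\psi(\tau_{x/\varepsilon}\omega)$ with $\varphi,\varphi_1\in C^{1}([0,T]\times\overline{\bQ})$ and $\psi\in\Psi\cap W^{1,2}(\Omega)$, so that $\nabla\phi^{\varepsilon}=\nabla\varphi+\varphi_1(\rmD_{\omega}\psi)(\tau_{x/\varepsilon}\omega)+O(\varepsilon)$ in $L^{2}$. The time-derivative term is handled by direct two-scale convergence of $\widetilde{\partial_t u_s^{\varepsilon}}$. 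In the diffusion term, the coefficient $D_s(t,x,\tau_{x/\varepsilon}\omega)(\nabla\varphi+\varphi_1\rmD_{\omega}\psi)$ is a strong two-scale test field thanks to (H.1) and Lemma \ref{lem:f-i-g-i-ergodic} combined with Remark \ref{lem:Extension-Phi-q-stoch-holes}, so the two-scale convergence of $\widetilde{\nabla u_s^{\varepsilon}}$ passes to the limit. For the non-homogeneous Neumann source, I rewrite $\varepsilon\int_{\Gamma_{\bQ}^{\varepsilon}}\eta\,\phi^{\varepsilon}\,d\mathcal{H}^{m-1}=\int_{\bQ}\eta(t,x,\tau_{x/\varepsilon}\omega)\phi^{\varepsilon}\,d\mu_{\Gamma(\omega)}^{\varepsilon}$ using \eqref{rescaling 2} and invoke \eqref{dstar2} of Lemma \ref{lem:f-i-g-i-ergodic}, obtaining $\int_{\bQ}\varphi\int_{\Omega}\chi_{\Gamma}\eta\,d\mugammapalm\,dx$.

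For the quadratic coagulation terms $u_j^{\varepsilon}u_k^{\varepsilon}$, I would upgrade the $L^{2}$ strong convergence of $\sE_{\varepsilon}u_s^{\varepsilon}$ to any $L^{p}$ by interpolation with the $L^{\infty}$ bound \eqref{4.61}, write $\widetilde{u_j^{\varepsilon}u_k^{\varepsilon}}\phi^{\varepsilon}=\chi_{\bQ^{\varepsilon}}(\sE_{\varepsilon}u_j^{\varepsilon})(\sE_{\varepsilon}u_k^{\varepsilon})\phi^{\varepsilon}$, and combine strong convergence of the extended factors with $\chi_{\bQ^{\varepsilon}}\stackrel{2s}{\weakto}\chi_{G^\complement}$ (a consequence of Lemma \ref{lem:sto-ts-conv-on-holes} applied to the constant sequence $1$) to pass to $\int\chi_{G^\complement}u_ju_k\varphi$, producing the factor $\theta=\P(G^\complement)$ after integrating out $\omega$. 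The resulting two-scale variational identity, valid for all admissible triples $(\varphi,\varphi_1,\psi)$, splits into two problems: choosing $\varphi\equiv 0$ and using the characterization \eqref{eq:char-l2pot} of $\cV^{2}_{\pot}(\Omega)$ shows that, for almost every $(t,x)$, the corrector $v_s$ solves the stochastic cell problem on $G^\complement\subset\Omega$ with zero flux on $\Gamma$; by linearity $v_s=\sum_i w_i\,\partial_{x_i}u_s$ with $w_i$ as in \eqref{5.18}. Substituting this representation back and choosing $\varphi_1\equiv 0$ yields \eqref{5.15}--\eqref{5.17} with the symmetric, positive definite effective matrix $D_s^{\star}$ (ellipticity inherited from (H.3)). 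Uniqueness for the homogenized system, a triangular cascade of reaction-diffusion equations with bounded, locally Lipschitz right-hand sides in $(u_1,\dots,u_M)$, follows by a standard Gronwall argument; this in turn forces the whole original sequence, and not only a subsequence, to two-scale converge. The main technical obstacle I anticipate is the rigorous passage to the limit in the surface source term: matching the Palm-measure ergodic theorem \eqref{dstar2} with test fields $\eta\,\phi^{\varepsilon}$ that are not themselves in the countable family $\Psi$ requires the full strength of the separability/density framework of Remark \ref{rem:es-count-dense-set}, together with the absence of holes near $\partial\bQ$ recorded in Remark \ref{7 feb:1}.
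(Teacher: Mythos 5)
Your proposal follows essentially the same route as the paper: extraction of two-scale limits via the a priori bounds and Lemma \ref{lem:conv-h1-time}, testing with $\phi_0+\varepsilon\,\phi\,\psi(\tau_{x/\varepsilon}\omega)$, passing to the limit in the surface term via \eqref{rescaling 2} and \eqref{dstar2}, handling the coagulation nonlinearity by combining the strong $L^2$ convergence of $\sE_\varepsilon u_s^\varepsilon$ with the weak two-scale convergence, and then decoupling into the cell problem \eqref{5.18} and the macroscopic system with $D_s^\star$. Your added remarks on uniqueness via Gronwall (hence whole-sequence convergence) and on the $L^\infty$-interpolation for the quadratic terms are consistent refinements of the paper's argument rather than deviations.
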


\begin{proof}
In view of Theorems \ref{l4.2} and \ref{t1.6}, the sequences 
$\widetilde{(u_s^{\varepsilon})}_{\varepsilon > 0}$,
$\widetilde{(\nabla_x u_s^{\varepsilon})}_{\varepsilon > 0}$ and
$ \widetilde{\bigg(\frac{\displaystyle \partial u_s^{\varepsilon}}
{\displaystyle \partial t}\bigg)}_{\varepsilon > 0}$ ($1 \leq s \leq M$) are
bounded in $L^2 ([0,T] \times \bQ)$.
Using Lemma \ref{lem:conv-h1-time}, they two-scale converge,
up to a subsequence, respectively, to:
$[\chi_{G^\complement} \, u_s(t,x)]$, 
$[\chi_{G^\complement} (\nabla_x u_s(t,x)+v_s (t,x,\omega))]$,
$[\chi_{G^\complement} \partial_t u_s(t,x)]$,
 where $u_s \in L^2 (0,T; H^1(\bQ))$ and
$v_s \in L^2 ([0,T] \times \bQ; L_{\pot}^{2}(\Omega))$.
As test functions for homogenization, let us take
\begin{equation} \label{5.1}
\phi^{\varepsilon}(t,x,\omega):=\phi_0(t,x)+\varepsilon \, \phi(t,x) 
\psi(\tau_{\frac{x}{\varepsilon}}\omega)
\end{equation}
where $\phi_0, \phi \in C^1([0,T] \times \overline{\bQ})$ and $\psi\in\Psi$,
with $\Psi$ being the set of Remark \ref{rem:es-count-dense-set}.

In the case when $s=1$, let us multiply the first equation of (\ref{2b.1}) by
the test function $\phi^{\varepsilon}$.
Integrating, the divergence theorem yields
\begin{equation} \label{5.2} 
\begin{split}
&\displaystyle \int_0^T \int_{\bQ^\eps(\omega)} 
\frac{\displaystyle \partial u_1^{\varepsilon}}{\displaystyle \partial t} \,
\phi^{\varepsilon} (t,x,\omega) \, dx \, dt+
 \displaystyle \int_0^T \int_{\bQ^\eps(\omega)} 
\bigg< D_1(t,x,\tau_{\frac{x}{\varepsilon}}\omega) \nabla_x u_1^{\varepsilon}, 
\nabla \phi^{\varepsilon}  \bigg> \, dx \, dt \\ 
&+\displaystyle \int_0^T \int_{\bQ^\eps(\omega)} u_1^{\varepsilon}
\sum_{j=1}^M a_{1,j} \, u_j^{\varepsilon} \, \phi^{\varepsilon}(t,x,\omega) 
\, dx \, dt=
\varepsilon \, \displaystyle \int_0^T \int_{\Gamma_{\bQ}^\eps(\omega)}
\eta ( t,x,\tau_{\frac{x}{\varepsilon}}\omega ) \, 
\phi^{\varepsilon}(t,x,\omega) 
\, d\mathcal{H}^{m-1} \, dt.
\end{split}
\end{equation}
Passing to the two-scale limit, as $\eps \rightarrow 0$, we get, 
taking into account \eqref{rescaling 2}:
\begin{eqnarray} \label{5.3} 
&\displaystyle \int_0^T \int_{\bQ} \int_{\Omega} \chi_{G^\complement}
\frac{\displaystyle \partial u_1}{\displaystyle \partial t}(t,x) \, 
\phi_0(t,x) \, d\P({\omega}) \, dx \, dt \nonumber \\
&+ \displaystyle \int_0^T \int_{\bQ} \int_{\Omega} \chi_{G^\complement}
D_1(t,x,\omega)
[\nabla_x u_1(t,x)+v_1(t,x,\omega)] \nonumber \\ 
&\cdot
[\nabla_x \phi_0(t,x)+\phi(t,x) \nabla_{\omega} \psi(\omega)] \, 
d\P({\omega}) \, dx \, dt \nonumber \\ 
&+\displaystyle \int_0^T \int_{\bQ} \int_{\Omega} \chi_{G^\complement} 
u_1(t,x)
\sum_{j=1}^M a_{1,j} \, u_j(t,x) \, \phi_0(t,x) \, 
d\P({\omega}) \, dx \, dt \nonumber \\
&= \displaystyle \int_0^T \int_{\bQ} \int_{\Omega} 
\chi_{\Gamma_{G^\complement}} \,
\eta(t,x,\omega) \,
\phi_0(t,x) \, d\mugammapalm(\omega)  \, dx \, dt.
\end{eqnarray}
The term on the right-hand side follows from
Eq. (\ref{dstar2}).
The last term on the left-hand side of (\ref{5.3}) has been obtained by
observing that $\sE_\eps u_j^\eps\to u_j$ strongly in $L^2(0,T;L^2(\bQ))$ 
(see Lemma \ref{lem:conv-h1-time}) 
and that the two-scale convergence of 
$u_1^\eps\stackrel{2s}{\weakto}\chi_{G^\complement}u_1$ 
implies weak convergence of
$u_1^\eps \phi^\eps(\cdot,\cdot,\omega)\weakto u_1 \phi_0 
\int_\Omega \chi_{G^\complement} d\P({\omega})$ 
in $L^2(0,T;L^2(\bQ))$.

An integration by parts shows that (\ref{5.3}) can be put in the strong
form associated with the following homogenized system:

\begin{equation} \label{5.4}
-div_{\omega} [D_1(t,x,\omega) (\nabla_x u_1(t,x)+v_1(t,x,\omega))]=0 \, \, \, 
\qquad
\; \; \; \text{ in} \, \,
[0,T] \times {\bQ} \times {G^\complement}
\end{equation}

\begin{equation} \label{5.5}
[D_1(t,x,\omega) (\nabla_x u_1(t,x)+v_1(t,x,\omega))] \cdot 
\nu_{\Gamma_{G^\complement}} 
=0  \, \, \, 
\qquad 
\; \; \; \; \; \; \; \; \;  \; \; \text{ on} \, \, 
[0,T] \times {\bQ} \times \Gamma_{G^\complement}
\end{equation}

\begin{equation} \label{5.6} 
\begin{split}
&\theta \, \frac{\displaystyle \partial u_1}{\displaystyle \partial t}(t,x)
-div_x \bigg[  \displaystyle \int_{\Omega} \chi_{G^\complement} \, 
D_1(t,x,\omega)
(\nabla_x u_1(t,x)+v_1(t,x,\omega)) d\P({\omega}) \bigg] \\
&+\theta \, u_1(t,x)
\sum_{j=1}^M a_{1,j} \, u_j(t,x) 
- \displaystyle \int_{\Omega}  \chi_{\Gamma_{G^\complement}} \,
\eta(t,x,\omega) \, d\mugammapalm(\omega)=0 \, \, \, 
\; \; \; \text{ in} \, \, [0,T] \times {\bQ}
\end{split}
\end{equation}

\begin{equation} \label{5.7}
\bigg[ \displaystyle \int_{\Omega} \chi_{G^\complement} \,
D_1(t,x,\omega) (\nabla_x u_1(t,x)+
v_1(t,x,\omega)) 
\, d\P({\omega}) \bigg] \cdot n =0  \, \, \, \qquad 
\text{ on} \, \, 
[0,T] \times \partial\bQ
\end{equation}
where
\begin{equation} \label{5.8}
\theta=\int_{\Omega} \chi_{G^\complement} \, d\P({\omega})=\P(G^\complement)
\end{equation}
represents the fraction of volume occupied by $G^\complement$.
To conclude, by continuity, we have that
$$u_1(0,x)=U_1 \, \, \, \qquad \; \; \; \; \; \text{ in} \, \, \bQ.$$
The function $v_1(t,x,\omega)$, satisfying (\ref{5.4}) and (\ref{5.5}),
can be expressed as follows
\begin{equation} \label{5.9}
v_1(t,x,\omega):=\sum_{i=1}^m \, w_i (t,x,\omega) \, 
\frac{\displaystyle \partial u_1}{\displaystyle \partial x_i}(t,x)
\end{equation}
where $(w_i)_{1 \leq i \leq m} \in L^2 ([0,T] \times \bQ;
L_{\pot}^{2}(G^\complement))$
is the family of solutions of the microscopic problem
\begin{eqnarray} \label{5.10}
\begin{cases}
-div_{\omega} [D_1(t,x,\omega) (w_i(t,x,\omega)+ \hat{e}_i)]=0 \, \, \, \qquad 
\text{ in} \, \, G^\complement \\
D_1(t,x,\omega) [w_i(t,x,\omega)+\hat{e}_i] \cdot \nu_{\Gamma_{G^\complement}}
=0 
\, \, \, \qquad 
\; \;
\text{ on} \, \, \Gamma_{G^\complement} 
\end{cases}
\end{eqnarray}
and $\hat{e}_i$ is the $i$-th unit vector of the canonical basis of 
${\mathbb R}^m$.
The system (\ref{5.10}) represents the stochastic version of the
"cell problem" defined in periodic homogenization.
By using the relation (\ref{5.9}) in Eqs.(\ref{5.6}) and (\ref{5.7}), we get
\begin{equation} \label{5.11} 
\begin{split}
\theta \, \frac{\displaystyle \partial u_1}{\displaystyle \partial t}(t,x)
&-div_x \bigg[  D_1^{\star}(t,x) \, 
\nabla_x u_1(t,x) \bigg]
+\theta \, u_1(t,x)
\sum_{j=1}^M a_{1,j} \, u_j(t,x)  \\
&- \displaystyle \int_{\Omega} \chi_{\Gamma_{G^\complement}} \, 
\eta(t,x,\omega) \, d\mugammapalm(\omega)=0 \, \, \, 
\; \; \; \text{ in} \, \, [0,T] \times \bQ
\end{split}
\end{equation}
\begin{equation} \label{5.12}
[ D_1^{\star}  \nabla_x u_1 (t,x)] \cdot n=0 
\; \; \; \; \; \;  \; \; \; \; \; \; \; \;
\text{ on} \, \, 
[0,T] \times \partial\bQ 
\end{equation}
where the entries of the matrix $D_1^{\star}$ (called "effective diffusivity")
are given by
\begin{equation} \label{5.13}
(D_1^{\star})_{ij}(t,x)=\displaystyle \int_{\Omega} \chi_{G^\complement} \,
D_1(t,x,\omega)
[ w_i(t,x,\omega)+ \hat{e}_i] \cdot
[w_j(t,x,\omega)+ \hat{e}_j] \, d\P({\omega}).
\end{equation}
The proof for the case $1< s \leq M$ is achieved by applying exactly the
same arguments.

\end{proof}

\vskip2mm

\appendix 
\section{Appendix A} \label{appA}

We review some basic results on the realization of random domains based on
continuum percolation theory \cite{MR}.

\subsection{Stationary ergodic point processes.} \label{appA.1} 
Since in percolation theory, random modeling is based on the occurrences
of stationary point processes, in this section, we state their definition
and some basic properties \cite{DV}.

\begin{definition} \label{dA.1}
Denote the $\sigma$-algebra of Borel sets in $\mathbb{R}^m$ by
$\mathcal{B}^m$.

(i) A Borel measure $\mu$ on $\mathbb{R}^m$ is boundedly finite if
$\mu (A) < \infty$ for every bounded Borel set $A$.

(ii) Let $N$ be the space of all boundedly finite integer-valued
measures on $\mathcal{B}^m$, called counting measures for short.
\end{definition}

\begin{proposition} \label{pA.1}
A boundedly finite measure $X$ on $\mathcal{B}^m$ is a counting measure
(i.e., $X \in N$) if and only if
\begin{equation} \label{A.1}
X= \sum_i k_i \, \delta_{x_i},
\end{equation}
where $k_i$ are positive integers and $\{x_i\}$ is a countable set with
at most finitely many $x_i$ in any bounded Borel set.
In Eq. (\ref{A.1}) we use Dirac measures defined for every 
$x_i \in \mathbb{R}^m $ by
\begin{equation} \label{A.2}
\delta_{x_i}(A)=
\begin{cases}
1 & \text{if } x_i \in A, \\
0 & \text{otherwise.}
\end{cases}
\end{equation}
\end{proposition}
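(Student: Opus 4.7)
\noindent\textbf{Proof plan for Proposition \ref{pA.1}.}

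The $(\Leftarrow)$ direction is immediate: if $X=\sum_i k_i\delta_{x_i}$ with $k_i\in\mathbb{N}$ and the support locally finite, then for any Borel set $A$ we have $X(A)=\sum_{x_i\in A}k_i\in\mathbb{N}\cup\{+\infty\}$, and bounded finiteness follows because only finitely many $x_i$ sit in any bounded Borel set. So the content of the statement is the $(\Rightarrow)$ direction, and the whole task is to reconstruct the atoms of an abstract boundedly finite integer-valued measure.

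The plan for $(\Rightarrow)$ is to isolate the atomic part and then kill the remainder. Let $X\in N$. Define the set of atoms $S:=\{x\in\mathbb{R}^m:\,X(\{x\})\geq 1\}$; since $X$ is integer-valued, this coincides with the set of points of positive mass. For any bounded Borel set $B$, the sum $\sum_{x\in S\cap B}X(\{x\})\leq X(B)<\infty$, so $S\cap B$ is finite, and hence $S=\{x_i\}_{i\in I}$ is countable and locally finite. Put $k_i:=X(\{x_i\})\in\mathbb{N}$, and set
\[
X_a:=\sum_{i}k_i\,\delta_{x_i}\,,\qquad X_c:=X-X_a\,.
\]
Then $X_c$ is a well-defined nonnegative boundedly finite Borel measure (it is nonnegative because on any bounded $B$ we have $X_a(B)=\sum_{x_i\in B}k_i\leq X(B)$ by monotonicity applied to finite subcollections), it is integer-valued on every bounded Borel set, and by construction $X_c(\{x\})=0$ for every $x\in\mathbb{R}^m$.

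The main point, and the place where care is needed, is to show $X_c\equiv 0$. Suppose for contradiction that $X_c(B_0)\geq 1$ for some bounded Borel set $B_0$. Enclose $B_0$ in a closed cube and perform a dyadic subdivision: at level $n$ the cube is partitioned into $2^{mn}$ half-open subcubes of diameter $2^{-n}\sqrt{m}$. Intersecting with $B_0$ and using additivity together with the fact that the $X_c$-measures of the pieces are nonnegative integers summing to at least $1$, at least one subcube $C_1$ satisfies $X_c(C_1\cap B_0)\geq 1$; iterating, we obtain a decreasing sequence of closed cubes $C_1\supset C_2\supset\cdots$ with diameters tending to $0$ and $X_c(C_n\cap B_0)\geq 1$ for all $n$. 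By compactness $\bigcap_n C_n=\{x_0\}$ for a single point $x_0$, and by continuity from above of the finite measure $X_c\!\!\restriction_{C_1}$ we get $X_c(\{x_0\})=\lim_n X_c(C_n\cap B_0)\geq 1$, contradicting atomlessness of $X_c$. Hence $X_c=0$ on every bounded Borel set, and by bounded finiteness $X_c\equiv 0$, so $X=X_a=\sum_i k_i\delta_{x_i}$ with the required properties.

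The only delicate step is the last one: verifying that an atomless, integer-valued, boundedly finite Borel measure on $\mathbb{R}^m$ must vanish. Everything else is bookkeeping. I would present it exactly via the dyadic nested-cube argument above; the key inputs are integrality (to force a subcube of mass $\geq 1$ at each refinement), local finiteness (to apply continuity from above), and compactness of closed cubes (to conclude the intersection is a single point, producing the forbidden atom).
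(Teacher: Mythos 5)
Your proof is correct. Note that the paper itself offers no argument for Proposition \ref{pA.1}: it is quoted as a standard fact from Daley--Vere-Jones \cite{DV}, so there is nothing in the text to compare against; your write-up simply supplies the proof the paper delegates to the reference. The structure you chose (extract the atomic part $X_a$, observe that $X_c=X-X_a$ is a nonnegative, boundedly finite, integer-valued, atomless measure, and kill it by the dyadic nested-cube argument) is the standard one and all the key inputs are correctly identified. The only point where a referee might ask for one more line is the passage from the disjoint \emph{half-open} dyadic subcubes (needed for additivity) to the nested \emph{closed} cubes (needed for compactness): the intersection of the half-open cubes themselves can be empty, so one must pass to closures, use monotonicity to keep the lower bound $X_c(\overline{C_n}\cap\overline{B_0})\geq 1$, and then conclude from continuity from above that the single point $x_0=\bigcap_n\overline{C_n}$ lies in $\overline{B_0}$ and carries mass $\geq 1$. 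You signal this switch but do not spell it out; it is routine and does not affect the validity of the argument.
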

We equip $N$ with the $\sigma$-algebra $\mathcal{N}$ generated by sets of the
form
$$\{ X \in N: \, \, X(A)=k \}$$
where $A \in \mathcal{B}^m$ and $k$ is an integer. We finally introduce $N^\ast$ the set of all counting measures such that for all $i\in\N$ it holds $k_i=1$ in \eqref{A.1}.

\begin{definition} \label{dA.2}
A point process $X$ on state space $\mathbb{R}^m$
is a measurable mapping from a probability space
$(\Omega, \mathcal{F}, \mathbb{P})$ into $(N, \mathcal{N})$. 
It is called simple if $X(\omega)\in N^\ast$ a.s..
The distribution of $X$ is the measure $\mu$ on $\mathcal{N}$ induced by
$X$, i.e. 
\begin{equation}\label{eq:def-distribution-PP}
\mu (G)=\mathbb{P} (X^{-1} (G)),
\quad\text{for all }\, G \in \mathcal{N}\,.
\end{equation}
\end{definition}

The notation of Definition \ref{dA.2} is intended to imply that with every
sample point $\omega \in \Omega$, we associate a particular realization
that is a boundedly finite integer-valued Borel measure on $\mathbb{R}^m$.
We denote it by $X(\cdot, \omega)$ or just $X(\cdot)$ (when we have no
need to draw attention to the underlying spaces).
A realization of a point process $X$ has the value $X(A, \omega)$ (or 
just $X(A)$) on the Borel set $A \in \mathcal{B}^m$.
For each fixed $A$, $X_A \equiv X(A, \cdot)$ is a function mapping
$\Omega$ into $\mathbb{R}_+$, and thus it is a candidate for a nonnegative
random variable, as it is shown in the following proposition.

\begin{proposition} \label{pA.2}
Let $X$ be a mapping from a probability space into $N$ and $\mathcal{A}$
a semiring of bounded Borel sets generating $\mathcal{B}^m$.
Then $X$ is a point process if and only if $X_A$ is a random variable
for each $A \in \mathcal{A}$.
\end{proposition}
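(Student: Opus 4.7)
My plan is to prove the two directions separately. The forward implication is immediate from the definition of $\mathcal{N}$, while the reverse direction reduces to showing that the class of Borel sets $A$ for which $X_A$ is measurable coincides with all of $\mathcal{B}^m$; this I would establish by a monotone class argument starting from the semiring $\mathcal{A}$.

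For the forward direction, I would note that $\mathcal{N}$ is generated by the cylinder sets $\{\mu \in N : \mu(A) = k\}$, which are precisely the preimages of singletons under the evaluation map $\pi_A: N \to \mathbb{Z}_+$, $\mu \mapsto \mu(A)$. Hence each $\pi_A$ is measurable, and if $X: \Omega \to N$ is measurable then $X_A = \pi_A \circ X$ is a random variable for every Borel $A$, in particular for every $A \in \mathcal{A}$.

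For the reverse direction, I would introduce
$$\mathcal{C} := \{A \in \mathcal{B}^m : X_A \text{ is a random variable}\},$$
observe that $\mathcal{A} \subseteq \mathcal{C}$ by hypothesis, and aim at $\mathcal{C} = \mathcal{B}^m$. First, $\mathcal{C}$ is closed under finite disjoint unions, since by additivity of measures $X_{A \sqcup B} = X_A + X_B$; therefore $\mathcal{C}$ contains the ring $\mathcal{R}$ of all finite disjoint unions of $\mathcal{A}$-sets. Next, I would check that $\mathcal{C}$ restricted to bounded sets is closed under monotone limits: for $A_n \uparrow A$ or $A_n \downarrow A$ with $A$ bounded and $A_n \in \mathcal{C}$, continuity of $X(\omega)$ as a measure, together with the fact that $X(\omega)$ is boundedly finite, gives $X_A = \lim_n X_{A_n}$ pointwise on $\Omega$, hence $X_A$ is a random variable. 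Applying the monotone class theorem for rings on the class of bounded Borel sets, I conclude that $\mathcal{C}$ contains every bounded Borel set. Finally, for arbitrary $A \in \mathcal{B}^m$, I would exhaust by $A \cap B_n(0)$ to write $X_A$ as an increasing limit of random variables (now possibly $+\infty$-valued), showing $\mathcal{C} = \mathcal{B}^m$. Measurability of $X$ into $(N,\mathcal{N})$ then follows because the generators $\{\mu \in N : \mu(A) = k\}$ of $\mathcal{N}$ have preimages $\{X_A = k\} \in \mathcal{F}$.

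The main obstacle I anticipate is the monotone class step: because $\mathcal{A}$ consists only of bounded sets and does not contain $\mathbb{R}^m$, one must work with a $\sigma$-ring rather than a $\sigma$-algebra when passing from $\mathcal{A}$ to the bounded Borel sets. The decreasing-limit part of the argument also genuinely uses that $X(\omega)$ is a boundedly finite measure, since otherwise the pointwise limit $\lim_n X_{A_n}$ could fail to equal $X_A$. Once these two points are handled correctly, the remaining verifications (additivity, exhaustion by balls, pullback of the generators) are routine.
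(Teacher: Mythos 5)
The paper itself offers no proof of this proposition; it is quoted as background from Daley and Vere-Jones \cite{DV}, so there is no in-paper argument to compare against, and your proposal must stand on its own. It does: both directions follow the standard route, and the overall structure (evaluation maps for the forward direction; a generating-class argument over the semiring, plus exhaustion by balls, for the converse) is correct. One step, however, is stated too loosely. A ``monotone class theorem on the class of bounded Borel sets'' is not literally available: the bounded Borel sets are not closed under increasing limits, so the restriction of $\mathcal{C}$ to bounded sets cannot be a monotone class, and the unrestricted $\mathcal{C}$ fails closure under decreasing limits of unbounded sets (where $\lim_n X_{A_n}$ need not equal $X_A$). The clean repair is to localize: fix $D$ in the ring $\mathcal{R}$ of finite disjoint unions of $\mathcal{A}$-sets and check that $\left\{ A\in\mathcal{B}^m : A\subseteq D,\ X_A\ \text{is a random variable}\right\}$ is a Dynkin system on $D$ (bounded finiteness of $X(\omega)$ enters exactly where you predicted, to justify $X_{A'\setminus A}=X_{A'}-X_A$ for $A\subseteq A'\subseteq D$) containing the $\pi$-system $\left\{ A\cap D : A\in\mathcal{A}\right\}$; the $\pi$--$\lambda$ theorem then yields all Borel subsets of $D$, and your final increasing exhaustion handles general $A$. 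This reformulation also surfaces the hypothesis you use tacitly, namely that $\mathcal{A}$ covers $\mathbb{R}^m$ by countably many of its members (so that the generated $\sigma$-ring is all of $\mathcal{B}^m$); this is standard for the dissecting semirings intended in \cite{DV} but is worth stating, since without it the bounded Borel sets outside every countable union of $\mathcal{A}$-sets are not reached by any monotone-class or Dynkin argument.
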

Taking for $\mathcal{A}$ the semiring of all bounded sets in $\mathcal{B}^m$
we obtain the following corollary.

\begin{corollary} \label{cA.1}
$X: \Omega \mapsto N$ is a point process if and only if $X(A)$ is a random
variable for each bounded $A \in \mathcal{B}^m$.
\end{corollary}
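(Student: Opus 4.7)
The plan is to obtain this as an immediate specialization of Proposition \ref{pA.1}, so the work reduces to verifying the two hypotheses of that proposition for the chosen semiring $\mathcal{A}$, namely the collection of all bounded Borel sets of $\mathbb{R}^m$. First I would check that this collection is indeed a semiring: since it is closed under finite intersections and set differences (the difference of two bounded sets is bounded, and the difference of two Borel sets is Borel), it is in fact a ring, hence a fortiori a semiring containing $\emptyset$. Second, I would verify that it generates $\mathcal{B}^m$: for any $A\in\mathcal{B}^m$, writing $A=\bigcup_{n\in\N} (A\cap B_n(0))$, where $B_n(0)$ is the open ball of radius $n$, exhibits $A$ as a countable union of bounded Borel sets, so any $\sigma$-algebra containing all bounded Borel sets contains $A$.

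With both hypotheses in hand, Proposition \ref{pA.1} applies verbatim and gives the equivalence between $X:\Omega\to N$ being a point process and the $\mathbb{R}_+$-valued measurability of $X_A=X(A,\cdot)$ for every bounded $A\in\mathcal{B}^m$. Alternatively, one could bypass Proposition \ref{pA.1} and argue directly: the $\sigma$-algebra $\mathcal{N}$ on $N$ is by definition generated by the cylinders $\{X\in N:\,X(A)=k\}$ with $A$ bounded Borel and $k\in\N_0$, and the preimage of such a cylinder under $X$ is $X_A^{-1}(\{k\})$, so $X$ is $(\sF,\mathcal{N})$-measurable iff each $X_A$ is $\sF$-measurable. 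There is no genuine obstacle here; the only subtlety worth flagging is ensuring the semiring/generating properties are correctly invoked, but both are standard.
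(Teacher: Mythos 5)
Your argument is correct and is exactly the paper's route: the corollary is obtained by applying the semiring criterion (Proposition \ref{pA.2}, not Proposition \ref{pA.1}, which is the Dirac-sum characterization of counting measures) with $\mathcal{A}$ taken to be all bounded Borel sets, and your verification that this collection is a (semi)ring generating $\mathcal{B}^m$ is precisely what the paper leaves implicit. Only the proposition label needs correcting.
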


We now consider invariance properties with respect to translations (or
shifts).
Let $T_t$ be the translation in $\mathbb{R}^m$ over the vector $t$:
$T_t(s)=t+s$, for all $s \in \mathbb{R}^m$. Then $T_t$ induces a transformation

$$S_t: N \rightarrow N$$
through the relation
$$(S_t n) (A)=n (T_t^{-1} (A))$$
for all $A \in \mathcal{B}^m$. It is easy to verify that 
$\left(S_t\right)_{t\in\Rm}$ form a group.
\begin{definition} \label{dA.3}
The point process $X$ is said to be stationary if 
\begin{equation}\label{eq:shift-inv-point}
    \forall G\in\mathcal{N}\quad \mu\left(S_t^{-1}(G)\right)=\mu(G)\,.
\end{equation}
\end{definition}
In other words, a process is stationary if for every $A\subset\Rm$, 
the distribution of $n(A)$ is invariant under shifts $t+A$. 
This can be interpreted that $n\in N$ has the same probability as 
all its shifts $S_t n$. 

Since $\P$ induces a probability measure $\mu$ on $(N,\mathcal{N})$ via \eqref{eq:def-distribution-PP}, 
it is convenient to replace the space $(\Omega, \mathcal{F},\P)$
by $(N, \mathcal{N},\mu)$ and to relabel formally 
$(\Omega, \mathcal{F},\P):=(N, \mathcal{N},\mu)$
so that any element $\omega \in \Omega$
represents a counting measure in $\mathbb{R}^m$. 
Identifying $\tau_x:=S_x$, by \eqref{eq:shift-inv-point} we now have a measure-preserving (m. p.) dynamical system 
$(\Omega, \mathcal{F}, \mu, \tau_x)$.

\begin{definition} \label{dA.5}
A stationary point process $\mu$ is said to be ergodic if $\{ \tau_x: x \in 
\mathbb{R}^m \}$ acts ergodically on $(\Omega, \mathcal{F}, \mu)$ 
in the sense of Definition \ref{def:ergodic}.
\end{definition}

\subsection{Percolation theory and random modeling.} \label{appA.2}
The continuum percolation theory provides a general setting for the
realization of random domains.
In this framework, two common models are the Boolean model and the 
random-connection model.

\subsubsection{The Boolean model.} \label{appA.2.1}
The Boolean model is driven by some stationary point process $X$.
Each point of $X$ is the centre of a closed ball (in the usual Euclidean
metric) with a random radius in such a way that radii corresponding to
different points are independent of each other and identically distributed.
The radii are also independent of $X$.
Additionally, we want the resulting random model to be stationary. 
In order to assign independent random values to the radii, 
we partition $\Rm$ into binary cubes
$$K(n,z):=\prod_{i=1}^m [z_i \, 2^{-n}, (z_i+1) \, 2^{-n} ]$$
for all $n \in \mathbb{N}$ and $z \in \mathbb{Z}^m$. 
We call this a binary cube of order $n$.
Each point $x \in X$ is contained in a unique binary cube of order $n$,
$K(n, z(n,x))$ and
for each point $x \in X$ there is a unique smallest number $n_0=n_0(x)$ such
that $K(n_0, z(n_0,x))$ contains no other points of $X$ (recall that $X$ is 
locally finite).
We assign to each point $x_i\in X$ a random value in $[0,\infty)$ in the 
following way: 
For a probability measure $\P_0$ on $[0,\infty)$ we define 
$$\Omega_2:=\prod_{n \in \mathbb{N}} \prod_{z \in \mathbb{Z}^m} [0, \infty)$$
with the corresponding product $\sigma$-algebra and product measure 
$\mathbb{P}_2:=\P_0^{\mathbb{N}\times\mathbb{Z}^m}$.
Denoting by $\omega_2 \in \Omega_2$ the elements of $\Omega_2$ 
we assign to each cube $K(n,z)$ the value $\omega_2(n,z)$ 
and to every $x\in X$ the radius $r=\omega_2(n_0,z(n_0,x))$. 

We now set $\Omega=\Omega_1 \times \Omega_2$ and equip $\Omega$ with product
measure $\mathbb{P}=\mathbb{P}_1 \times \mathbb{P}_2$ and the usual product
$\sigma$-algebra.
A Boolean model is a measurable mapping from $\Omega$ into $N \times \Omega_2$.

The product structure of $\Omega$ implies that the radii are independent
of the point process, and the product structure of $\Omega_2$ implies that
different points have balls with independent, identically distributed radii.

Let the unit vectors in $\mathbb{R}^m$ be denoted by $e_1, \ldots, e_m$.
The translation $T_{e_i}: \mathbb{R}^m \rightarrow \mathbb{R}^m$ defined
by: $x \rightarrow x+e_i$ induces a transformation $U_{e_i}$ on
$\Omega_2$ through the equation

\begin{equation} \label{A.7}
(U_{e_i} \omega_2) (n,z)=\omega_2 (n, z-2^n e_i).
\end{equation}
As before, $S_{e_i}$ is defined on $\Omega_1$ via the equation

\begin{equation} \label{A.8}
(S_{e_i} \omega_1) (A)= \omega_1 (T_{e_i}^{-1} A).
\end{equation}
Hence, $T_{e_i}$ induces a transformation $\tilde{T}_{e_i}$ on
$\Omega=\Omega_1 \times \Omega_2$ defined by

\begin{equation} \label{A.9}
\tilde{T}_{e_i}(\omega)=(S_{e_i} \omega_1, U_{e_i} \omega_2).
\end{equation}
The transformation $\tilde{T}_{e_i}$ corresponds to a translation by the
vector $e_i$ of a configuration of balls in space. The Boolean model  is
now stationary in the sense that $\P$ is shift invariant w.r.t.
$\left(\tilde{T}_x\right)_{x\in\mathbb{Z}^m}$. 
If we replace $\Omega_2$ by $\Omega_2\times[0,1)^m$ as in Sections 2.6 and 3.2 
of \cite{Hei17} we can construct a  family of mappings
$\left(\tau_x\right)_{x\in\mathbb{R}^m}$ on $\Omega$ such that
we have stationarity of $\P$ w.r.t. $\tau_x$.

\subsubsection{The random-connection model.} \label{appA.2.2}
As in Boolean models, a stationary point process $X$ is the first 
characteristic of the random-connection model (RCM) and it assigns
randomly points in the space.
The second characteristic of the model is a so-called connection function,
which is a non-increasing function from the positive reals into $[0, 1]$.
Given a connection function $g$, the rule is as follows: for any two points
$x_1$ and $x_2$ of the point process $X$, we insert an edge between $x_1$
and $x_2$ with probability $g(\vert x_1-x_2 \vert)$, independently of all
other pairs of points of $X$, where $\vert \cdot \vert$ denotes the usual
Euclidean distance.
The formal mathematical construction of a random-connection model is quite
similar to the one of a Boolean model.
First we assume that the point process $X$ is defined on a probability space
$(\Omega_1, \mathcal{F}_1, \mathbb{P}_1)$. 
Next we consider a second probability space $\Omega_2$ defined as

$$\Omega_2=\prod_{\{K(n,z), K(m,z')\}} [0,1]$$
where the product is over all unordered pairs of binary cubes.
An element $\omega_2 \in \Omega_2$ is written as 
$\omega_2(\{(n,z), (m,z') \})$.
We equip $\Omega_2$ with product measure $\mathbb{P}_2$.
As before, we set $\Omega=\Omega_1 \times \Omega_2$ and we equip $\Omega$
with product measure $\mathbb{P}=\mathbb{P}_1 \times \mathbb{P}_2$.
A random-connection model is a measurable mapping from $\Omega$ into
$N \times \Omega_2$ defined by

$$(\omega_1, \omega_2) \rightarrow (X(\omega_1), \omega_2).$$
The realisation corresponding to $(\omega_1, \omega_2)$ is obtained as
follows: for any two points $x$ and $y$ of $X(\omega_1)$, consider the
binary cubes $K(n_0(x), z(n_0(x), x))$ and $K(n_0(y), z(n_0(y), y))$.
We connect $x$ and $y$ if and only if

$$\omega_2(\{ (n_0(x), z(n_0(x), x)), (n_0(y), z(n_0(y), y)) \} ) <
g(\vert x-y \vert).$$
The dynamical system can be constructed similar to the Boolean model.

\subsubsection{The Poisson process.} \label{appA.2.3}
Usually, both the Boolean and the random-connection models are based on
occurrences of the Poisson point process.

\begin{definition} \label{dA.15}
The point process $X$ is said to be a Poisson process with density
$\lambda >0$ if (i) and (ii) below are satisfied:

\par\indent
(i) For mutually disjoint Borel sets $A_1, \ldots, A_k$, the random variables
$X(A_1), \ldots, X(A_k)$ are mutually independent.

\par\indent
(ii) For any bounded Borel set $A \in \mathcal{B}^m$ we have for every
$k \geq 0$

\begin{equation} \label{A.10}
\mathbb{P}(X(A)=k)=e^{-\lambda \, \lebesgueL(A)} \; \; 
\frac{\lambda^k \, \lebesgueL(A)^k}{k!}
\end{equation}
where $\lebesgueL(\cdot)$ denotes Lebesgue measure in $\mathbb{R}^m$.
\end{definition}
Eq. (\ref{A.10}) represents the probability that the number of points inside a
bounded Borel set $A$ equals $k$.
Condition $(ii)$ guarantees that a Poisson process is stationary.
Furthermore, one can prove \cite{MR}:

\begin{proposition} \label{pA.15}
A Poisson point process is ergodic.
\end{proposition}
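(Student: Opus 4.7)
The plan is to establish the stronger property that the Poisson process is \emph{mixing} with respect to the family $\{\tau_x\}_{x\in\Rm}$, and then to invoke the standard implication \emph{mixing $\Rightarrow$ ergodicity} (if $F$ is a $\tau_x$-invariant set, mixing gives $\mu(F)=\mu(F\cap\tau_x^{-1}F)\to\mu(F)^2$, hence $\mu(F)\in\{0,1\}$, which matches the criterion in Definition \ref{def:ergodic}(ii)).

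First, I would fix the natural generating $\pi$-system $\mathcal{C}$ for the $\sigma$-algebra $\mathcal{N}$ on $N$, namely the cylinder events
\[
C_{\mathbf{A},\mathbf{k}}:=\left\{X\in N\,:\,X(A_i)=k_i,\,\,i=1,\dots,n\right\},
\]
where $A_1,\dots,A_n\in\borelB^m$ are bounded and $k_1,\dots,k_n\in\N$; this is a $\pi$-system by Corollary \ref{cA.1} and it generates $\mathcal{N}$. To verify mixing it suffices to show
\begin{equation}\label{eq:mixing-generator}
\lim_{|x|\to\infty}\mu\!\left(\tau_x^{-1}C_{\mathbf{A},\mathbf{k}}\cap C_{\mathbf{B},\mathbf{j}}\right)=\mu(C_{\mathbf{A},\mathbf{k}})\,\mu(C_{\mathbf{B},\mathbf{j}})
\end{equation}
for two arbitrary cylinder events, and then to lift \eqref{eq:mixing-generator} to all of $\mathcal{N}\otimes\mathcal{N}$ by a $\pi$--$\lambda$ (monotone class) argument together with an approximation of general bounded Borel sets by bounded ones.

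The core of the argument is \eqref{eq:mixing-generator}. Since the $A_i$ and $B_j$ are bounded, for $|x|$ large enough the shifted collection $\{A_i+x\}_{i=1}^n$ is disjoint from $\{B_j\}_{j=1}^m$. Using $\tau_x=S_x$ and $X(A)\circ S_x=X(A+x)$, Definition \ref{dA.15}(i) applied to the disjoint family $A_1+x,\dots,A_n+x,B_1,\dots,B_m$ yields
\[
\mu\!\left(\tau_x^{-1}C_{\mathbf{A},\mathbf{k}}\cap C_{\mathbf{B},\mathbf{j}}\right)=\P\!\left(\bigcap_i\{X(A_i+x)=k_i\}\right)\,\P\!\left(\bigcap_j\{X(B_j)=j_j\}\right),
\]
and stationarity (which is guaranteed by Definition \ref{dA.15}(ii), since the one-dimensional laws $X(A)$ depend only on $\lebesgueL(A)$ and the joint distributions factorise under (i)) gives $\P(\bigcap_i\{X(A_i+x)=k_i\})=\mu(C_{\mathbf{A},\mathbf{k}})$. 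This yields \eqref{eq:mixing-generator} exactly.

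The main obstacle in writing the proof carefully is the lifting step from the generating $\pi$-system to all measurable sets: one has to argue that the class of pairs $(F_1,F_2)\in\mathcal{N}\times\mathcal{N}$ for which $\mu(\tau_x^{-1}F_1\cap F_2)\to\mu(F_1)\mu(F_2)$ is closed under complements and countable disjoint unions (a $\lambda$-system in each variable), so that by Dynkin's $\pi$--$\lambda$ theorem it coincides with $\mathcal{N}$. Once this is done, the choice $F_1=F_2=F$ for an invariant set $F$ forces $\mu(F)=\mu(F)^2$, and ergodicity in the sense of Definition \ref{def:ergodic}(ii), hence of Definition \ref{dA.5}, follows.
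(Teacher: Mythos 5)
The paper does not actually prove Proposition \ref{pA.15}: it is quoted from \cite{MR}, and your mixing argument is precisely the standard proof found there, so there is no genuine divergence of approach to report; the plan is correct. Two details are worth tightening when you write it out. First, Definition \ref{dA.15}(i) gives independence only for \emph{mutually disjoint} Borel sets, whereas the $A_i$ (and likewise the $B_j$) need not be pairwise disjoint among themselves; to factor the probability of the two cylinder events you should pass to the atoms of the finite algebra generated by $\{A_i+x\}_i\cup\{B_j\}_j$: for $|x|$ large these atoms split into one group contained in $\bigcup_i(A_i+x)$ and one contained in $\bigcup_j B_j$, all atoms are mutually disjoint so their counts are mutually independent by (i), and each cylinder event is a measurable function of the counts on its own group. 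Second, the closure of your good class under countable disjoint unions requires interchanging $\sum_n$ with $\lim_{|x|\to\infty}$; this is legitimate because $\mu(\tau_x^{-1}E_n\cap F_2)\le\mu(\tau_x^{-1}E_n)=\mu(E_n)$ uniformly in $x$ and $\sum_n\mu(E_n)\le 1$, so dominated convergence for series applies --- without this remark the $\pi$--$\lambda$ step is not obviously valid. (Equivalently, you may replace the $\pi$--$\lambda$ step by the usual approximation of arbitrary $F_1,F_2\in\mathcal{N}$ in $\mu$-measure by finite unions of cylinders, using that $\tau_x$ preserves $\mu$.)
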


The following result shows that ergodicity of a point process carries over
to a Boolean model or to a random-connection model driven by that process
\cite{MR}.

\begin{proposition} \label{pA.16}
Suppose $X$ is ergodic. Then, any Boolean model or random-connection model
driven by $X$ is also ergodic.
\end{proposition}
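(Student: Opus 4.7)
The approach is to exploit the product structure
$(\Omega,\sF,\P)=(\Omega_1\times\Omega_2,\sF_1\otimes\sF_2,\P_1\times\P_2)$
together with the factored action $\tilde T_x(\omega_1,\omega_2)=(S_x\omega_1,U_x\omega_2)$
and to reduce the statement to the classical principle that the product of an ergodic
dynamical system and a mixing one is again ergodic. Ergodicity of $(S_x)$ on
$(\Omega_1,\P_1)$ is the standing hypothesis (and is supplied for the Poisson case by Proposition \ref{pA.15}). The task is therefore (a) to verify that
$(U_x)$ is strongly mixing on $(\Omega_2,\P_2)$, (b) to invoke the product-ergodicity
principle, and (c) to upgrade from the $\mathbb{Z}^m$-action to the full $\Rm$-action.

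\textbf{Step 1 (mixing of $U_x$ on $\Omega_2$).} I would first show that $(U_x)$,
acting on the countable product space---either $\Omega_2=\prod_{(n,z)}[0,\infty)$ in
the Boolean case or $\Omega_2=\prod_{\{(n,z),(m,z')\}}[0,1]$ in the RCM case---equipped
with the corresponding product measure $\P_2$, is measure-preserving and strongly mixing.
The key observation is that $U_{e_i}$ translates the index at level $n$ by $2^n e_i$,
so no coordinate is fixed for $x\neq 0$; the action is therefore a countable product of
Bernoulli shifts on independent factors. Mixing on cylinder sets follows directly from
independence, and passes to arbitrary measurable sets by the standard density argument
on the product $\sigma$-algebra.

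\textbf{Step 2 (ergodic combined with mixing yields ergodic).} Let $A\subset\Omega$ be
$\tilde T_x$-invariant and put $g=\chi_A\in L^2(\Omega)$. Approximating $g$ in $L^2$
by finite tensor sums $g_N=\sum_k f_k\otimes h_k$ with $f_k\in L^2(\Omega_1)$ and
$h_k\in L^2(\Omega_2)$, and using invariance of $g$ under $\tilde T_x$ together with
Cesaro averages along $\Rm$, I would establish
\[
\frac{1}{|B_R|}\int_{B_R}\langle g,\tilde T_x g\rangle_{L^2(\Omega)}\,\d x
\;\longrightarrow\;\left(\int_\Omega g\,\d\P\right)^{2}
\quad\text{as }R\to\infty,
\]
because the mixing of $U_x$ forces the $\Omega_2$-inner products of simple tensors to
converge to the product of their means, while the ergodicity of $S_x$ governs the
resulting $\Omega_1$-averages via Birkhoff's theorem. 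On the other hand, invariance gives
$\langle g,\tilde T_xg\rangle=\|g\|_{L^2(\Omega)}^{2}$ for every $x$, so the Cesaro average
equals $\|g\|_{L^2(\Omega)}^2$. Therefore $\|g\|_{L^2(\Omega)}^{2}=\left(\int g\,\d\P\right)^{2}$,
which forces $\chi_A$ to be $\P$-a.e.\ constant, i.e.\ $\P(A)\in\{0,1\}$.

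\textbf{Step 3 (from $\mathbb{Z}^m$ to $\Rm$).} As the excerpt remarks just after (A.9),
the action $\tilde T_x$ is naturally defined only for integer translations; the full
$\Rm$-parameter family is obtained, following Sections 2.6 and 3.2 of \cite{Hei17}, by
appending an auxiliary torus factor $[0,1)^m$ to $\Omega_2$ and letting $\tau_x$ act on
this factor by addition of the fractional part of $x$ combined with $\tilde T_{\lfloor x\rfloor}$
on the remaining factors. The Kronecker flow on $[0,1)^m$ is ergodic, and the argument of
Step 2 applies verbatim once this torus factor is absorbed into $\Omega_2$, yielding
ergodicity of the full $\Rm$-action.

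The hard part will be Step 2: justifying the interchange of limits implicit in the
claim that the Cesaro average of $\langle g,\tilde T_xg\rangle$ tends to $(\int g)^2$
when only one factor is mixing and the other merely ergodic. The cleanest route is the
spectral one---the unitary representation of $\Rm$ on $L^2(\Omega)$ splits as a tensor
product, the mixing factor has no nonconstant eigenvectors of modulus one, and so the
invariant subspace of the product collapses to functions constant in the second variable,
which then collapse by ergodicity of $S_x$---but making this rigorous requires care with
the spectral theorem for the $\Rm$-group action and with measurability of the shift $U_x$.
A secondary technical check is that the $2^n e_i$-shift structure on $\Omega_2$ genuinely
decouples cylinder sets for sufficiently separated translations, uniformly over the
finite coordinate set on which each cylinder depends.
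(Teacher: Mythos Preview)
The paper does not supply its own proof of Proposition~\ref{pA.16}; the statement is simply quoted from Meester and Roy \cite{MR} (see the sentence preceding the proposition). There is therefore nothing to compare against in the paper itself.

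Your outline is essentially the argument given in \cite{MR} (Propositions~2.2 and~2.6 there): one shows that the i.i.d.\ product structure on $(\Omega_2,\P_2)$ makes $(U_x)_{x\in\mathbb{Z}^m}$ a Bernoulli shift, hence strongly mixing; then one invokes the classical fact that the direct product of an ergodic action with a (weakly) mixing one is ergodic. Your Step~2 is a correct proof of that fact---the point being that for simple tensors $f\otimes h$ the mixing of $U_x$ forces $\langle h,U_xh\rangle\to(\int h)^2$, after which the mean ergodic theorem for $S_x$ handles the remaining $\Omega_1$-factor; the interchange you worry about is justified because the $\Omega_1$-inner products stay bounded while the $\Omega_2$-inner products converge, so splitting $B_R$ into a bounded core and a far region suffices. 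Meester and Roy run the same density argument, though phrased more directly in terms of cylinder sets rather than spectral language. Your Step~3 (passing from the $\mathbb{Z}^m$-action to the $\Rm$-action via the torus suspension of \cite{Hei17}) is the correct way to finish in the present framework, and is not made explicit in \cite{MR}, where the setting is discrete from the outset.
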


\subsection{Realization of random perforated structures.} \label{appA.3}
In Section \ref{sec:rand-meas-an-sets}, we have stated the main assumptions 
that our perforated
domain should satisfy.
We have also stressed how a random spherical structure can provide a rather
realistic description of neurons in the cerebral tissue.
Unfortunately, a Boolean model driven by the Poisson point process allows,
in general, the perforations (i.e. the balls) to be generated arbitrarily
close to each other so as to form large connected clusters and small angles.
In this case {\bf{Assumption \ref{1}} } no longer holds and our method fails.

One way to construct domains in which the balls are non-intersecting and
have a minimal positive distance between them is to combine the Boolean and
the random-connection model as follows. This procedure is known as Matern process (see \cite{DV}, Example 10.4(d)). 
Let us consider a random-connection model driven by a Poisson process and
applied on a bounded region of $\mathbb{R}^m$. Two points are connected with 
probability 1 if they have distance less than some constant $d_0$. 
All connected points are then deleted from the process. In case of the Poisson 
process this means that a point is deleted with probability $1-\exp(-\lambda d_0)$, 
where $\lambda$ is the intensity of the point process.
Every remaining point will be assigned as the center of a ball
of random radius $\rho(\omega)<\frac{d_0}{2}$.
For simplicity, in our analysis we will consider balls with the same constant
radius $r_0<\frac{d_0}{2}$.
According to this construction, we obtain a domain randomly perforated with
balls of the same radius and with minimal distance between them, which
satisfies all the assumptions stated in Section \ref{sec:rand-meas-an-sets}.
In particular, let $G(\omega)$ be the union of such random spheres, then our
randomly perforated domain can be defined as  

\begin{equation} \label{A.20}
Q(\omega)=\mathbb{R}^m \setminus G(\omega).
\end{equation}

\vskip2mm

\par{\bf Acknowledgement.}\,
B.F.  is supported by the University of Bologna, 
funds for selected research topics,  by MAnET Marie Curie
Initial Training Network, by 
GNAMPA of INdAM (Istituto Nazionale di Alta Matematica ``F. Severi''), Italy, 
and by PRIN of the MIUR, Italy.

M.H. is financed by Deutsche Forschungsgemeinschaft (DFG) through Grant 
CRC 1114
''Scaling Cascades in Complex Systems'', Project C05 {\em Effective models for 
materials and interfaces with multiple scales}.

S.L. is supported by GNFM of INdAM (Istituto Nazionale di Alta 
Matematica ``F. Severi''), Italy.

\vskip2mm


\begin{thebibliography}{99}
\bibitem{Achdou} Y. Achdou, B. Franchi, N. Marcello and M. C. Tesi,
{\em A qualitative model for aggregation and diffusion of $\beta$-amyloid
in Alzheimer's disease}.
J. Math. Biol.,  67: 1369--1392, 2013.
\bibitem{Allaire} G. Allaire, {\em Homogenization and two-scale convergence}.
Siam J. Math. Anal.,  23: 1482--1518, 1992. 
\bibitem{Bertsch} M. Bertsch, B. Franchi, N. Marcello, M. C. Tesi and
A. Tosin, {\em Alzheimer's disease: a mathematical model for onset and
progression}. Math. Med. Biol.,  34: 193--214, 2017.
\bibitem{BFTT} M. Bertsch, B. Franchi, M. C. Tesi and A. Tosin, {\em
Microscopic and macroscopic models for the onset and progression of
Alzheimer's disease}. J. Phys. A: Math. Theor.,  50: 414003, 2017.
\bibitem{Tosin} M. Bertsch, B. Franchi, M. C. Tesi and A. Tosin, {\em
Well-posedness of a mathematical model for Alzheimer's disease}.
Siam J. Math. Anal.,  50: 2362--2388, 2018.
\bibitem{Brezis2011} H. Brezis, {\em Functional Analysis, Sobolev spaces 
and partial differential equations}. Springer-Verlag 2011.
\bibitem{CIE} F. Carbonell, Y. Iturria-Medina and A. C. Evans, 
{\em Mathematical modeling of protein misfolding mechanisms in neurological
diseases: A historical overview}. Front. Neurol.,  9: 37, 2018.
\bibitem{DV} D. J. Daley and D. Vere-Jones, {\em An Introduction to the
Theory of Point Processes}.
Vol. II, second edition, Springer-Verlag, New York, 2003. 
\bibitem{FL} B. Franchi and S. Lorenzani,
{\em From a microscopic to a macroscopic model for Alzheimer disease:
Two-scale homogenization of the Smoluchowski equation in perforated domains}.
J. Nonlin. Sci.,  26: 717--753, 2016.
\bibitem{FL_wheeden} B. Franchi and S. Lorenzani, 
{\em Smoluchowski equation with 
variable coefficients in perforated domains: homogenization and applications to
mathematical models in medicine}. In {\em Harmonic Analysis, 
Partial Differential Equations and Applications}
(eds. S. Chanillo, B. Franchi, G. Lu, C. Perez, E. T. Sawyer).
Birkh\"auser, pp. 49-67, 2017.
\bibitem{GK} N. Guillen and I. Kim, {\em Quasistatic droplets in randomly
perforated domains}. Arch. Rational Mech. Anal.,  215: 211--281, 2015.
\bibitem{Haass} C. Haass and D. J. Selkoe, {\em Soluble protein oligomers in
neurodegeneration: lessons from the Alzheimer's amyloid beta-peptide}.
Nat. Rev. Mol. Cell. Biol.,  8: 101--112, 2007. 
\bibitem{Hei11} M. Heida, {\em An extension of the stochastic two-scale
convergence method and application}. Asymptotic Analysis,  72: 1--30, 2011.
\bibitem{Hei17} M. Heida, {\em Stochastic homogenization of rate-independent 
systems and applications}.
Continuum Mechanics and Thermodynamics,  29: 853--894, 2017. 
\bibitem{HS2014} M. Heida and B. Schweizer, {\em Stochastic homogenization of 
plasticity equations}. ESAIM--COCV,  24(1): 153--176, 2018.
\bibitem{Helal} M. Helal, A. Lakmeche, P. Mazzocco, A. Perrillat-Mercerot, 
L. Pujo-Menjouet, H. Rezaei and L. M. Tine, {\em Stability analysis of a
steady state of a model describing Alzheimer's disease and interactions
with prion proteins}.
J. Math. Biol.,  78: 57--81, 2019.
\bibitem{HPV} P. Hornung, M. Pawelczyk and I. Velcic, {\em Stochastic
homogenization of the bending plate model}. J. Math. Anal. Appl.,
 458: 1236--1273, 2018.
\bibitem{JKO} V. V. Jikov, S.M. Kozlov and O.A. Oleinik, {\em Homogenization
of Differential Operators and Integral Functionals}.
Springer-Verlag, Berlin, 1994. Translated from the Russian by G.A. Yosifian.
\bibitem{Karran} E. Karran, M. Mercken and B. De Strooper, {\em The amyloid
cascade hypothesis for Alzheimer's disease: an appraisal for the development
of therapeutics}. Nat. Rev. Drug Discov.,  10: 698--712, 2011.
\bibitem{ladyzenskaya_et_al} O.A. Ladyzenskaja, V.A. Solonnikov and
N.N. Ural'ceva, {\em Linear and quasi-linear equations of parabolic type}.
American Mathematical Society, 1968.
\bibitem{MFRL} S. Mazzitelli, F. Filipello, M. Rasile, E. Lauranzano,
C. Starvaggi-Cucuzza, M. Tamborini, D. Pozzi, I. Barajon, T. Giorgino,
A. Natalello and M. Matteoli, {\em Amyloid-$\beta$ 1-24 C-terminal truncated
fragment promotes amyloid-$\beta$ 1-42 aggregate formation in the healthy
brain}. Acta Neuropathologica Communications, 4: 110, 2016.
\bibitem{MR} R. Meester and R. Roy, {\em Continuum percolation}.
Cambridge University Press, 1996.
\bibitem{Musiek} E. S. Musiek and D. M. Holtzman, {\em Three dimensions of
the amyloid hypothesis: time, space and 'wingmen'}. 
Nature Neuroscience,  18: 800--806, 2015.
\bibitem{Nguetseng} G. Nguetseng, {\em A general convergence result for a
functional related to the theory of homogenization}.
Siam J. Math. Anal.,  20: 608--623, 1989.
\bibitem{nittka} R. Nittka, {\em Inhomogeneous parabolic Neumann problems}.
Czechoslov. Math. J.,  64: 703--742, 2014.
\bibitem{PapVar} G.C. Papanicolaou  and S.R.S. Varadhan, 
{\em Boundary value problems with rapidly oscillating random coefficients}. 
Colloq. Math. Soc. J\'anos Bolyai  27: 835--873, 1981.
\bibitem{simon1986} J. Simon, {\em Compact sets in the space $L^p(0,T;B)$}.
Annali di Matematica pura ed applicata  146: 65--96, 1986.
\bibitem{Stein} E. M. Stein, {\em Singular integrals and differentiability
properties of functions}. Princeton University Press, 1970.
\bibitem{Tomiyama} T. Tomiyama, S. Matsuyama, H. Iso, T. Umeda, H. Takuma,
K. Ohnishi, K. Ishibashi, R. Teraoka, N. Sakama, T. Yamashita, K. Nishitsuji,
K. Ito, H. Shimada, M. P. Lambert, W. L. Klein and H. Mori, {\em A mouse
model of amyloid beta oligomers: their contribution to synaptic alteration,
abnormal tau phosphorylation, glial activation, and neuronal loss in vivo}.
J. Neurosci.,  30: 4845--4856, 2010.
\bibitem{wrz} D. Wrzosek, {\em Existence of solutions for the discrete
coagulation-fragmentation model with diffusion}.
Topol. Methods Nonlinear Anal.,  9: 279--296, 1997. 
\bibitem{Zhang} Y. Zhang, L. Lu, J. Jia, L. Jia, C. Geula, J. Pei, Z. Xu, 
W. Qin, R. Liu, D. Li and N. Pan, {\em A lifespan observation of a novel
mouse model: in vivo evidence supports A$\beta$ oligomer hypothesis}.
PLoS ONE,  9: e85885, 2014.  
\bibitem{ZP} V.V. Zhikov and A.L. Pyatnitskii, {\em Homogenization of random
singular structures and random measures}. Izvestiya: Mathematics,  70: 
19--67, 2006.
\end{thebibliography}
\end{document}